\DeclareFontFamily{U}{txsyc}{}
\DeclareFontShape{U}{txsyc}{m}{n}{
   <-> txsyc%
}{}
\DeclareFontShape{U}{txsyc}{bx}{n}{
   <-> txbsyc%
}{}
\DeclareFontShape{U}{txsyc}{l}{n}{<->ssub * txsyc/m/n}{}
\DeclareFontShape{U}{txsyc}{b}{n}{<->ssub * txsyc/bx/n}{}
\DeclareSymbolFont{symbolsC}{U}{txsyc}{m}{n}
\DeclareMathSymbol{\df}{\mathrel}{symbolsC}{"42}
\DeclareMathSymbol{\fd}{\mathrel}{symbolsC}{"43}
\DeclareMathSymbol{\lJoin}{\mathrel}{symbolsC}{"58}
\DeclareMathSymbol{\rJoin}{\mathrel}{symbolsC}{"59}
\newcommand{\cC}{{\cal C}}
\newcommand{\cD}{{\cal D}}
\newcommand{\ES}{\mathbb{E}}
\newcommand{\cO}{{\cal O}}
\newcommand{\cS}{{\cal S}}
\newcommand{\ER}{\mathbb{R}}
\newcommand{\PE}{\mathbb{P}}
\newcommand{\EE}{\mathbb{E}}
\newcommand{\LL}{\mathbb{L}}
\newcommand{\NN}{\mathbb{N}}
\newcommand{\PP}{\mathbb{P}}
\newcommand{\QQ}{\mathbb{Q}}
\newcommand{\RR}{\mathbb{R}}
\newcommand{\EEE}{\mathrm{\bf E}}
\newcommand{\PPP}{\mathrm{\bf P}}
\newcommand{\iy}{\infty}
\newcommand{\lt}{\left}
\newcommand{\me}{\medskip}
\newcommand{\pa}{\partial}
\newcommand{\ri}{\rightarrow}
\newcommand{\rt}{\right}
\newcommand{\sm}{\smallskip}
\newcommand{\tr}{\triangle}
\newcommand{\wi}{\widetilde}
\newcommand{\wit}{\widehat}
\newcommand{\fo}{\forall\ }
\newcommand{\Id}{\mathrm{Id}}
\newcommand{\lve}{\lt\vert}
\newcommand{\lVe}{\lt\Vert}
\newcommand{\rve}{\rt\vert}
\newcommand{\rVe}{\rt\Vert}
\newcommand{\st}{\,:\,}
\newcommand{\trace}{\mathrm{tr}}
\newcommand{\un}{\mathds{1}}
\newcommand{\bq}{\begin{eqnarray*}}
\newcommand{\bqn}[1]{\begin{eqnarray}\label{#1}}
\newcommand{\eq}{\end{eqnarray*}}
\newcommand{\eqn}{\end{eqnarray}}
\newcommand{\wwtbp}{\par\hfill $\blacksquare$\par\me\noindent}
\newcommand{\thistitlepagestyle}{}
\newcommand{\lin}{\llbracket}
\newcommand{\rin}{\rrbracket}
\newcommand{\ttsim}{\raise.17ex\hbox{$\scriptstyle\mathtt{\sim}$}}
\newtheorem{pro}{Proposition} 
\newtheorem{cor}[pro]{Corollary}
\newtheorem{lem}[pro]{Lemma}
\newtheorem{theo}[pro]{Theorem}
\renewcommand{\thepro}{\arabic{pro}}
\newenvironment{rem}
{\par\me\refstepcounter{pro}\noindent{\bf Remark \thepro\ }}
{\par\hfill $\square$\par\me\noindent}
\newcommand{\proof}{\par\me\noindent\textbf{Proof}\par\sm\noindent}
\newcommand{\prooff}[1]{\par\me\noindent\textbf{#1}\par\sm\noindent}
\title{A stochastic model for speculative bubbles}
\author{Sébastien Gadat, Laurent Miclo and Fabien Panloup
}
\date{\box1
}
\begin{document}

\setbox1=\vbox{
\large
\begin{center}
Institut de Mathématiques de Toulouse, UMR 5219\\
Université de Toulouse and CNRS, France\\
\end{center}
} 
\setbox3=\vbox{
\hbox{\{gadat, miclo, panloup\}@math.univ-toulouse.fr\\}
\vskip1mm
\hbox{Institut de Mathématiques de Toulouse\\}
\hbox{Université Paul Sabatier\\}
\hbox{118, route de Narbonne\\} 
\hbox{31062 Toulouse Cedex 9, France\\}
}
\setbox5=\vbox{
\box3
}

\maketitle
\thistitlepagestyle
\abstract{This paper aims to provide a simple modelling of speculative  bubbles and derive some quantitative properties of its dynamical evolution.
Starting from a description of individual speculative behaviours, we build and study a second order Markov process, which after simple transformations can be viewed as a  \textit{turning} two-dimensional Gaussian process.   
Then, our main problem is to obtain some bounds for the \textit{persistence rate} relative to the return time to a given price.
In our main results, we prove with both spectral and probabilistic methods that this rate is almost proportional to the turning frequency $\omega$ of the model and provide some explicit bounds.
In the continuity of this result, we build some estimators of $\omega$ and of the pseudo-period of the prices. At last, we end the paper by a proof of the quasi-stationary distribution of the process, as well as the existence of its persistence rate.
}
\bigskip

{\small
\textbf{Keywords}: Speculative bubble; Persistence rate; Gaussian Process; Diffusion Bridge; Statistics of processes.
 
\par
\vskip.3cm
\textbf{MSC2010:} Primary: 60J70, 35H10, 60G15, 35P15. 

}\par

\section{Introduction}

The evolution of prices in markets such as real estate is a popular subject of investigation.
The purpose of this paper is to propose a stochastic model which, at the same
time, is simple enough to be studied mathematically
and accounts for periodicity phenomena induced by speculation.
\smallskip

One commonly talks of financial bubble when, due to speculation of traders or owners, an asset price exceeds an asset fundamental value.
These owners then expect to resell the asset at an even higher price in
the future. There exist a lot of famous historical examples such as the Dutch Tulip Mania (1634-1637), the Mississippi bubble
(1718-1720) or the "Roaring '20s that preceded the 1929" crash. We refer to \cite{Garber} for a general remainder on historical bubbles.
Through some more recent events, one can observe that this phenomena is certainly actual. Think for instance to the Internet bubble which bursted in March 2000 after having led astronomical heights and lost more than 75\% of its value, and to the housing bubble encountered in the United States (2000-2010) 
 or in European countries (Spain, Ireland, France \ldots) (see $e.g.$ \cite{Shiller} or \cite{Friggit}).
 \smallskip 
 
A huge litterature exists on speculative bubbles and it seems nearly impossible to quote all the related numerous previous works.
We point out that our goal here is not to detail a general model flexible enough to take into account several complex economic realities.
However,  our approach is to propose a very simple tractable model from a mathematical point of view: the natural equilibrium price is assumed to be $0$ all along our temporal evolution, we do not consider any inflation nor credit crunch \cite{Fahri} and there is no regulating effect of any federal bank \cite{Bernanke}. At last, the processes introduced in the paper will be supposed time-homogeneous. 

According to Shiller \cite{Shiller} (see also \cite{kiselev}), the mechanism of creation of speculative bubbles is the following: 
\textit{"If asset prices start to rise strongly, the success of some investors
attracts public attention that fuels the spread of the enthusiasm for the market: 
(often, less sophisticated) investors enter the market and bid up prices. This "irrational
exuberance" heightens expectations of further price increases, as investors extrapolate recent price action far into the future. The markets meteoric rise is typically justified in
the popular culture by some superficially plausible "new era" theory that validates the
abandonment of traditional valuation metrics. But the bubble carries the seeds of its own
destruction; if prices begin to sag, pessimism can take hold, causing some investors to exit
the market. Downward price motion begets expectations of further downward motion, and
so on, until the bottom is eventually reached."}

In the previous citation, two phenomenas are exhibited: on the one hand, the investors have a tendency to follow the forecasting rule which consists in deciding that the price will increase if it has (strongly) increased in the past. On the other hand, the actions of the investors have certainly a self-reinforcing effect. 
In this paper, we assume  more or less that the dynamics of the market is dictated by these two phenomenas. However, we (necessarily) assume that there is also a general mean-reverting force and there exists randomness in the decisions of the investors.
Then, our model is obtained as the limit of the mean dynamics of all the investors   when the number of these investors tends to infinity (see next paragraph for more details).

Let us also precise that our setting corresponds to the so-called rational bubbles under symmetric information paradigm described in  \cite{blanchard, tirole1} for instance. In our framework, we are interested in the periodic pattern commonly encountered in such speculative markets, which is of primarily interest as pointed by \cite{Evans}. We establish that these periodic phenomena are related to a persistence problem which is also an important field of interest from an economic point of view \cite{blanchard,boucher2}. Note that our model is also simple enough to imagine statistical inference procedures for the estimation of several key parameters. Hence, even if our work comes from a probabilistic motivation, it also opens the way of statistical procedures to test bubble formation. This last statistical point is shortly discussed in the end of our paper and seems challenging for future works (some numerical results show that the standard likelihood estimation does not seem well suited to approach the unknown parameters in such a model).

At last, it is generally empirically observed that the bubbles bursts are fasten than bubbles formations. Our model can be generalized to more complex settings where such burst's and formation's timing could be different using a mixture of memory weights $\Gamma_{k,b}$  with $k \geq 2$ (see next paragraph for more details).

\subsection{Modeling of speculation}\label{mos}

Let us designate by $X\df(X_t)_{t\geq 0}$ the temporal evolution of the relative price of a commodity with respect to
another one. For instance it can be the difference between the price of the mean square meter of real estate in a particular town
and the price of the ounce of gold or the mean salary of a month of work.
Let the units be chosen
 so that, in the mean over a long time period, this relative price is zero.
We assume that three mechanisms are at work for the evolution of $X$:\\
- Economic reality plays the role of a restoring force, trying to draw $X$ back toward zero.
At least as a first approximation, it is natural to assume that this force is linear, whose rate will be denoted $a>0$.\\
- Speculation is reinforcing a tendency observed for some times in the past.
We make the hypothesis that the weight of past
 influences is decreasing exponentially fast in time, with rate $b>0$.
 The typical length the observation time window will be given by $b^{-1}$.\\
- Uncertainty is modeled by a Brownian motion of volatility $c>0$, which is a traditional assumption for randomness
coming from a lot of small unpredictable and independent perturbations, due to the functional central limit theorem.
\par
Putting together these three leverages, we end up with a law of evolution of $X$ described by the
stochastic differential equation
\bqn{eds}\label{eq:model}
\fo t\geq 0,\qquad dX_t&=&-aX_tdt+\lt(b \int_0^t\exp(b(s-t))\, dX_s\rt)dt +cdB_t\eqn
assuming for instance that initially, $X_0=0$.
Because of the presence of the Brownian motion $(B_t)_{t\geq 0}$ in the r.h.s., the trajectories of $X$
are not differentiable with respect to the time parameter $t\geq 0$.
But for the purpose of a heuristic interpretation, let us pretend they are, so we can consider $X'_t\df\frac{dX_t}{dt}$.
Assume furthermore that the ``origin" of time was chosen so that before it, $X$ was zero, namely, in the above economic interpretation,
the two commodities had their prices tied up at their relative equilibrium point before time 0.  This enables us to define $X'_t=X_t=0$ for any $t\leq 0$.
The middle term of the r.h.s.\ of (\ref{eds}) can then be rewritten as
\bqn{sigma}
\nonumber b \int_0^t\exp(b(s-t))\, dX_s&=&\int_0^{+\iy} X'_{t-s}\, b\exp(-bs)ds\\
&=&\EEE[X'_{t-\sigma}]\eqn
where $\sigma$ is distributed as an exponential variable of parameter $b$ and where $\EEE$ stands for the expectation
with respect to $\sigma$ (i.e.\ not with respect to the randomness underlying $X$, the corresponding expectation
will be denoted $\EE$).
Thus $X$ has a drift taking into account its past tendencies $X'$, but very old ones are almost forgotten, due to the exponential weight.
Indeed, since $\EEE[\sigma]=b$,  tendencies older than a time of order $b$ don't contribute much.
\par\me
The equation (\ref{eds}) can be seen as the limit evolution of the means of (relative) prices predicted by a large number $N\in\NN$ of speculative agents.
Assume that each agent $n\in\lin N\rin\df\{1, ..., N\}$ has his own idea of the evolution of the prices, designated by $X(n)\df (X_t(n))_{t\geq 0}$.
The mean process $\bar X\df (\bar X_t)_{t\geq 0}$ is defined by 
\bq
\fo t\geq 0,\qquad \bar X_t&\df& \frac1{N}\sum_{n\in\lin N\rin} X_t(n).\eq
For simplicity, we assume as above that all these processes were also defined for negative times and that
\bq
\fo t\leq 0,\, \fo n\in\lin N\rin, \qquad X_t(n)\ =\ \bar X_t\ =\ 0.\eq
At any time $t\geq 0$, each agent $n\in\lin N\rin$ has access to the whole past history $(\bar X_s)_{s\leq t}$ of the mean prices
(say, which is published a particular institute or website). But to handle this wealth of information, agent $n$
has chosen, once for all, a time window length $ \Upsilon(n)>0$
 and he computes the ratio $(\bar X_t-\bar X_{t-\Upsilon(n)})/\Upsilon(n)$
in order to decide what is the present tendency of the prices.
Then he interferes that this tendency contributes to
 the infinitesimal evolution of his estimate of prices $dX_t(n)$ via the term $(\bar X_t-\bar X_{t-\Upsilon(n)})/\Upsilon(n)\,dt$,
 speculating that what has increased (respectively decreased) will keep on increasing (resp.\ decreasing). Nevertheless, as everyone, he also undergoes the strength of the economic reality with rate $a>0$,
which adds a term $-a X_t(n)dt$ to his previsions. Furthermore, he cannot escape vagaries of life, good or bad, 
which disturb his evaluations with the infinitesimal increment $c\sqrt{N} dB_t(n)$, where $B(n)\df (B_t(n))_{t\geq 0}$ is a standard Brownian motion. The factor $\sqrt{N}$ may seem strange at first view,
but it accounts for the fact that the consequences of random events are amplified by a large population.
Alternately, it could be argued that $\sqrt{N}d B_t(n)$ decompose into $\sum_{m\in\lin N\rin} dB_t(n,m)$,
where  $(B_t(n,m))_{t\geq 0}$, for $n,m\in\lin N\rin$, are independent Brownian motions standing respectively for
the random perturbations induced by  $m$ on $n$ (including a self-influence $(B_t(n,n))_{t\geq 0}$).
It follows that 
\bq
\fo t\geq 0,\qquad dX_t(n)&=&-aX_t(n)dt+\frac{\bar X_t-\bar X_{t-\Upsilon(n)}}{\Upsilon(n)}dt +c\sqrt{N}dB_t(n)\eq
and we deduce that
\bq
\fo t\geq 0,\qquad d\bar X_t&=&-a\bar X_tdt+\lt(\frac1{N}\sum_{n\in\lin N\rin}\frac{\bar X_t-\bar X_{t-\Upsilon(n)}}{\Upsilon(n)}\rt)dt +c\frac1{\sqrt{N}}\sum_{n\in\lin N\rin}dB_t(n)\eq
Let us assume that all the $\Upsilon(n)$, for $n\in\lin N\rin$, and all the $B(m)$, for $m\in\lin N\rin$ are independent.
A first consequence is that the process $\bar B=(\bar B_t)_{t\geq 0}$ defined by
\bq
\fo t\geq 0,\qquad \bar B_t&\df& \frac1{\sqrt{N}}\sum_{n\in\lin N\rin}B_t(n)\eq
is a standard Brownian motion.
\\
Next, under the hypothesis that all the $\Upsilon(n)$, $n\in\lin N\rin$ have the same law as a random variable $\Upsilon$, we get by the law of 
large numbers, that almost surely,
\bq
\lim_{N\ri\iy}
\frac1{N}\sum_{n\in\lin N\rin}\frac{\bar X_t-\bar X_{t-\Upsilon(n)}}{\Upsilon(n)}&=&\EEE\lt[\frac{\bar X_t-\bar X_{t-\Upsilon}}{\Upsilon}
\rt]\eq
where $\EEE$ stands for the expectation with respect to $\Upsilon$ only.
Thus letting $N$ go to infinity, $\bar X$ ends up satisfying the same evolution equation as $X$, if the law of $\Upsilon$ is such that
\bqn{ipp}
\fo t\geq 0,\qquad
\EEE\lt[\frac{X_t- X_{t-\Upsilon}}{\Upsilon}
\rt]&=&b \int_0^t\exp(b(s-t))\, dX_s\eqn
almost surely with respect the trajectory $(X_s)_{s\in\RR}$.\\
Contrary to the first guess which could be made, $\Upsilon$ should not be distributed according to an exponential law of parameter $b$:
\begin{lem}
For any continuous semi-martingale $X=(X_t)_{t\in\RR}$ with $X_t=0$ for $t\leq 0$, (\ref{ipp}) is satisfied if 
$\Upsilon$ is distributed as a gamma law $\Gamma_{2,b}$ of shape 2 and scale $b$, namely if 
\bq
\fo t\geq 0, \qquad \PPP[\Upsilon\in dt]\ =\ \Gamma_{2,b}(dt)\ \df\ b^2t\exp(-bt)\, dt\eq
\end{lem}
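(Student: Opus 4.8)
The plan is to reduce both sides of (\ref{ipp}) to one and the same explicit functional of the trajectory $(X_s)_{s\in\RR}$, treating the right-hand side by It\^o's integration-by-parts formula rather than by a stochastic Fubini theorem, so that only continuity of $X$ and the condition $X\equiv 0$ on $(-\iy,0]$ are needed.

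First I would rewrite the left-hand side of (\ref{ipp}). Since $\Upsilon$ has density $u\mapsto b^2u e^{-bu}$ on $(0,+\iy)$, the factor $1/\Upsilon$ cancels the factor $u$ in the density, so that
\bq
\EEE\lt[\frac{X_t-X_{t-\Upsilon}}{\Upsilon}\rt]&=&b^2\int_0^{+\iy}(X_t-X_{t-u})\,e^{-bu}\,du .
\eq
To see that this step is licit for (almost) every trajectory, I would note that, $t$ being fixed, the integrand $u\mapsto b^2(X_t-X_{t-u})e^{-bu}$ is continuous on $(0,+\iy)$, is dominated by $2b^2(\sup_{[0,t]}\lve X\rve)\,e^{-bu}$ on $(0,t)$ by continuity of $X$, and equals $b^2X_t e^{-bu}$ on $[t,+\iy)$ because $X\equiv 0$ on $(-\iy,0]$; hence it is integrable and the two contributions may be separated, with $\EEE[X_t/\Upsilon]=bX_t$ and $\EEE[X_{t-\Upsilon}/\Upsilon]=b^2\int_0^{+\iy}X_{t-u}e^{-bu}\,du$ both finite. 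Performing the change of variable $s=t-u$ in the latter integral and using once more that $X_s=0$ for $s\leq 0$, the left-hand side of (\ref{ipp}) becomes $bX_t-b^2\int_0^t X_s e^{b(s-t)}\,ds$.

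It then remains to recognise the same quantity on the right-hand side of (\ref{ipp}). As $s\mapsto e^{b(s-t)}$ is a deterministic $C^1$ function, in particular of finite variation and with vanishing covariation against the continuous semi-martingale $X$, It\^o's integration-by-parts formula gives
\bq
b\int_0^t e^{b(s-t)}\,dX_s&=&b\lt(\lt[e^{b(s-t)}X_s\rt]_{s=0}^{s=t}-b\int_0^t X_s e^{b(s-t)}\,ds\rt)\\
&=&bX_t-b^2\int_0^t X_s e^{b(s-t)}\,ds ,
\eq
using $X_0=0$. Comparing with the expression found for the left-hand side yields (\ref{ipp}). The only genuinely delicate point is the very first identity, namely that the $\Upsilon$-expectation of $(X_t-X_{t-\Upsilon})/\Upsilon$ may be computed by integrating against the gamma density and then split; this is exactly where the continuity of $X$ and its vanishing on the negative half-line are used. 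Alternatively one could write $X_t-X_{t-u}=\int_0^t\un_{\{s>t-u\}}\,dX_s$ and interchange the $du$-integral with the stochastic integral, but the route above avoids invoking a stochastic Fubini theorem and relies only on the classical integration-by-parts formula together with an elementary change of variables.
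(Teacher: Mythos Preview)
Your proof is correct and follows essentially the same route as the paper: both compute the $\Upsilon$-expectation as $b^2\int_0^{+\iy}(X_t-X_{t-u})e^{-bu}\,du$ and then identify it with the stochastic integral via the integration-by-parts formula for the semi-martingale $X$ against the deterministic finite-variation function $s\mapsto e^{b(s-t)}$. The only cosmetic difference is the direction in which the integration by parts is applied: the paper integrates by parts on the ordinary integral $b^2\int_0^{+\iy}\wi X_s e^{-bs}\,ds$ (with $\wi X_s=X_t-X_{t-s}$) to produce the stochastic integral, whereas you integrate by parts on the stochastic integral $b\int_0^t e^{b(s-t)}\,dX_s$ to produce the ordinary integral; these are the same identity read in opposite directions, and your version has the minor advantage of not requiring one to view the time-reversed process $\wi X$ as a semi-martingale.
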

\proof
By continuity of $X$, it is sufficient to check the almost sure equality of (\ref{ipp}) for any fixed $t\geq 0$.
Then denote $\wi X_s= X_t-X_{t-s}$, for $s\geq 0$, so that
\bq
\EEE\lt[\frac{X_t- X_{t-\Upsilon}}{\Upsilon}
\rt]&=&b^2\int_0^{+\iy} \frac{X_t- X_{t-s}}{s}s\exp(-bs)\, ds\\
&=&b^2\int_0^{+\iy}  \wi X_s \exp(-bs)\, ds\eq
The fact that $X$ is a semi-martingale enables to integrate by parts and we find
\bq
b^2\int_0^{+\iy}  \wi X_s \exp(-bs)\, ds&=&-b\Big[\wi X_s \exp(-bs)\Big]_0^{+\iy}+b\int_0^{+\iy} \exp(-bs) \, d\wi X_s\\
&=&b\int_{-\iy}^t \exp(-b(t-s)) \, d X_{s}\\
&=&b\int_0^{t} \exp(-b(t-s)) \, d X_{s}\eq
\wwtbp
\begin{rem}
Conversely, for the process $X$ defined by (\ref{eds}) and $X_t=0$ for $t\leq 0$,
the validity of (\ref{ipp}) implies (under an integrability assumption) that the law of $\Upsilon$ is the gamma distribution $\Gamma_{2,b}$. Actually,
denote by $G$ the distribution of $\Upsilon$ and assume that  $\int_0^{+\infty}s^{-1} G(ds)<+\infty$.
 Then, by the previous result, (\ref{ipp}) reads
\begin{equation}\label{equalgam}
\forall \,t\ge0, \quad \int_0^{+\infty} \frac {{X}_t-{X}_{t-s}}{s} G(ds)= \int_0^{+\infty} \frac {{X}_t-{X}_{t-s}}{s} \Gamma_{2,b}(ds)\quad a.s.
\end{equation}
Now, let $t>0$. Since the above equality holds almost surely, it follows  from  Girsanov Theorem (see $e.g.$ \cite{MR1725357}, Chapter 8), that we can replace $(X_s)_{s\in[0,t]}$ by $c$ times a Brownian motion (and next by linearity take $c=1$). Then, the main argument is the support Theorem (see $e.g.$ \cite{StroockVaradhan_support}), which yields in particular that for every positive $t$ and $\varepsilon$, for every ${\cal C}^1$-function $\varphi:(-\infty,t]\rightarrow\ER$ such that $\varphi(u)=0$ on $\ER_{-}$,   
\begin{equation}\label{equalgam2}
\PE(\sup_{s\in[0,t]}|X_s-\varphi(s)|\le\varepsilon)>0.
\end{equation}
Let $\varphi$ be such a function. By \eqref{equalgam} and \eqref{equalgam2}, we obtain that for every positive $\varepsilon$,
\bq\left|\int_0^{+\infty}\frac{\varphi(t)-\varphi({t-s})}{s} (G(ds)-\Gamma_{2,b}(ds))\right|&\le 2\varepsilon\int_0^{+\infty} \frac{1}{s} (G(ds)+\Gamma_{2,b})(ds)
\eq
and it follows that for every ${\cal C}^1$-function with $\varphi(u)=0$ on $\ER_{-}$, 
\begin{equation}\label{depjp}
\int_0^{+\infty}\frac{\varphi(t)-\varphi({t-s})}{s} G(ds)=\int_0^{+\infty}\frac{\varphi(t)-\varphi({t-s})}{s}\Gamma_{2,b}(ds),
\end{equation}
the result being available for all positive $t$.
Denoting $r=\varphi(t)$ and $h(s)=\varphi(t)-\varphi({t-s})$ for all $s\in [0,t]$, we get that for all $r\in\RR$ 
and all $\cC^1$ function $h\,:\, [0,t]\ri\RR$ with $h(0)=0$,
\bq
r\int_t^{+\iy} \frac 1s( G-\Gamma_{2,b})(ds)+\int_0^t h(s) \frac{G-\Gamma_{2,b}}{s}\,ds&=&0\eq
namely
\bq
\int_t^{+\iy} \frac 1s( G-\Gamma_{2,b})(ds)&=&0\eq
and $G$ and $\Gamma_{2,b}$ coincide on $(0,t]$.
Since this is true for all $t>0$, we get that $G$ and $\Gamma_{2,b}$ coincide on $(0,+\iy)$.
Because they are both probability measures, they cannot differ only on $\{0\}$, so
$G=\Gamma_{2,b}$.
\par
\noindent In fact this proof can be extended to any continuous semi-martingale whose martingale part 
is  non-degenerate.
\end{rem}
The law $\Gamma_{2,b}$ has the same rate $b$ of exponential decrease of the queues at infinity as the exponential distribution $\Gamma_{1,b}$
of $\sigma$ in (\ref{sigma}). The most notable difference between these two distributions is their behavior near zero: it is much 
less probable to sample a small values under $\Gamma_{2,b}$ than under $\Gamma_{1,b}$. Furthermore,  $\Gamma_{2,b}$
is a little more concentrated around its mean $2/b$ than $\Gamma_{1,b}$ around its mean $1/b$, their respective relative standard deviations
being 1/2 and 1.
These features are compatible with the previous modeling: the chance is small that an agent looks shortly in the past to get an idea
of the present tendency of $X$ and the dispersion of the lengths of the windows used by the agents may not be very important.
These behaviors would be amplified, if instead of $\Gamma_{2,b}$, we had chosen a gamma distribution $\Gamma_{k,b}$ of shape $k$ and scale $b$, with $k\in\NN\setminus\{1,2\}$, for the law of $\Upsilon$. The limit  evolution in this situation is dictated by the stochastic differential equation in $X^{[k]}$ given by
\bq
\fo t\geq 0,\qquad dX^{[k]}_t&=&-aX^{[k]}_tdt+\lt(b (k-2)!\int_0^tg_{b,k}(t-s)\, dX^{[k]}_s\rt)dt +cdB_t\eq
(starting again from $X_0^{[k]}=0$), where $g_{b,k}$ is the function defined by
\bq
g_{b,k}\st \RR_+\ni s&\mapsto& \exp(-bs)\sum_{l\in \lin 0, k-2\rin}\frac{(bs)^{l}}{l!}\eq
(curiously, the r.h.s.\ coincides with  the probability that a Poisson random variable of parameter $bs$ belongs to $ \lin 0, k-2\rin$).
\\
For $k=2$, we recover (\ref{eds}) and $X^{[2]}=X$.
The stochastic process $X$ is clearly not Markovian, but we will see in the sequel that it is a Markov process of order 2:
it is sufficient to add another real component to $X$ to get a Markov process.
It can be shown more generally  that $X^{[k]}$ is a Markov process of order $k$: $k-1$ real components must be added
to make it a Markov process. While this observation provides opportunities of better modelings, the investigation
of $X^{[k]}$ for $k>2$ (as well as the extension to non-integer values of $k$) is deferred to a future paper.
Here we will concentrate on the properties of $X$, but before presenting the results obtained,
let us give some simulations of $X$ in Figure \ref{fig:trajectories}.

\begin{figure}[h]
 \centering
\includegraphics[height=5cm]{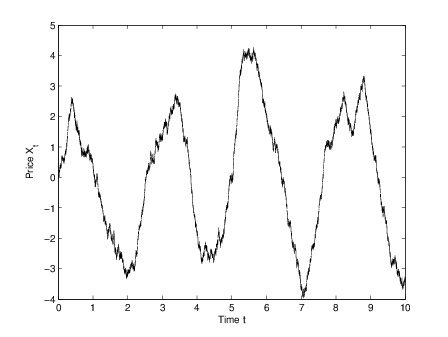}
\includegraphics[height=5cm]{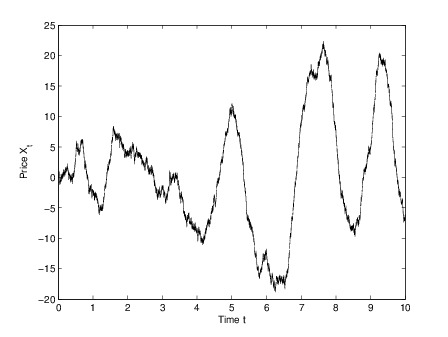}
\includegraphics[height=5cm]{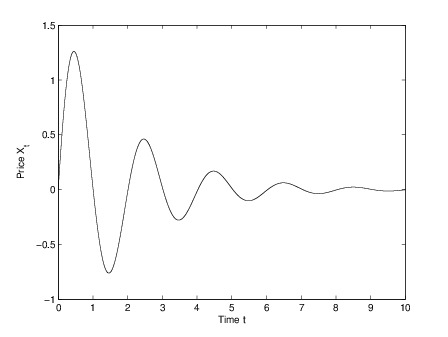}
\caption{\label{fig:trajectories} Several trajectories for various parameters (top left: $a=1, b=5, c=1$, top right: $a=1,b=10,c=5$, bottom: $a=1,b=10,c=0$).}
\end{figure}
A periodic structure appears, as that observed in practice in the forming of speculative bubbles. The process $X$ shows
some regularity in returning to its equilibrium position, trend which seems to be only slightly perturbed by the noise.
The variety of the trajectories is apparently less rich than that experienced by traditional Ornstein-Ulhenbeck processes,
suggesting  a concentration of the trajectory laws around some periodic patterns. Figure \ref{fig:return} shows the density of the return time of the process $(X)_{t \geq 0}$ to its equilibrium price $0$. These results have been obtained using a large number of Monte-Carlo simulations. One may remark in Figure \ref{fig:return} that the tail of the return time to equilibrium state is much smaller for our bubble process than the one of the O-U. process with the same invariant measure on the $X$ coordinate and with the same amount of injected randomness (namely through a standard Brownian motion). The purpose of this paper is to quantify these behaviors.

\begin{figure}[h]
 \centering
\includegraphics[height=5cm]{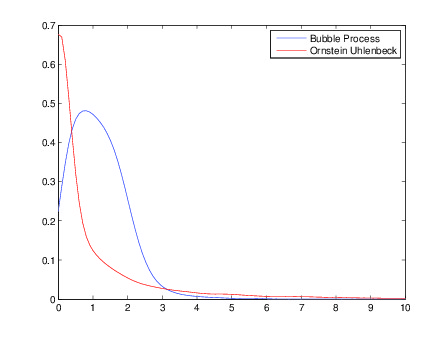}
\caption{\label{fig:return} Density function of the return time of the Ornstein-Uhlenbeck process, as well as the one of the bubble process. The O-U process is set to have the same invariant measure as the bubble process and  the same amount of injected randomness.}
\end{figure}

\subsection{Results}\label{r}

As already mentioned, the process $X$ whose evolution is driven by (\ref{eds}) is not Markovian.
Nevertheless, it is not so far away from being Markovian: consider the process $Y\df(Y_t)_{t\geq 0}$ defined by
\bq
\fo t\geq 0,\qquad Y_t&\df&b \int_0^t\exp(b(s-t))\, dX_s-b X_t\eq
The process $Z\df (Z_t)_{t\geq 0}\df((X_t,Y_t)^*)_{t\geq 0}$ (where ${}^*$  stands for the transpose operation) is then Markovian and its evolution
is dictated by the simple  2-dimensional stochastic differential equation
\bqn{eds2}
\fo t\geq 0,\qquad dZ_t&=&AZ_t\,dt+C\,dB_t\eqn
starting from $Z_0=0$ and where 
\bqn{AC}
A\ \df\ \lt( \begin{array}{cc}
b-a &1\\
-b^2& -b\end{array}\rt)
&\hbox{ and }&
C\ \df\  \lt( \begin{array}{c}
c\\
0\end{array}\rt)
\eqn
The linearity of (\ref{eds2}) and the fact that the initial condition is deterministic imply that at any time $t\geq 0$
the distribution of $Z_t$ is Gaussian. As it will be checked in next section, this distribution converges  for large time $t\geq 0$
toward $\mu$, a normal distribution of mean 0 and whose variance matrix $\Sigma$ is positive definite.
Since the Markov process $Z$ is  Feller, $\mu$ is an invariant probability measure for $Z$. It is in fact the only one,
because the generator $L$ associated to the evolution equation (\ref{eds2}) and given by
\bqn{L}
L&\df& ((b-a)x+y)\pa_x -(b^2x+by)\pa_y +\frac{c}{2}\pa_x^2\eqn
is hypoelliptic (also implying that $\Sigma$ is positive definite). \\
The study of the convergence to equilibrium of $Z$
begins with the spectral resolution of $A$. Three situations occur:
\\
$\bullet$ If $a>4b$, $A$ admits two real eigenvalues, $\lambda_{\pm}\df (-a\pm\sqrt{a^2-4ab})/2$.\\
$\bullet$ If $a=4b$, $A$ is similar to the  $2\times 2$ Jordan matrix associated to the eigenvalue $-a/2$.\\
$\bullet$ If $a<4b$, $A$ admits two conjugate complex eigenvalues, $\lambda_{\pm}\df (-a\pm i\sqrt{4ab-a^2})/2$.
\\
But in all cases, let $l<0$ be the largest  real part of the  eigenvalues, namely
\bqn{l}
l&\df&
\frac{-a+\sqrt{(a^2-4ab)_+}}{2}\eqn
This quantity is the exponential rate of convergence of $\mu_t$, the law of $Z_t$, toward $\mu$,
in the $\LL^2$ sense: for $t>0$, measure the discrepancy between $\mu_t$ and $\mu$ through
\bqn{J}
J(\mu_t,\mu)&\df& \sqrt{\int \lt( \frac{d \mu_t}{d\mu}-1\rt)^2\, d\mu}\eqn
Since $X$ was our primary object of interest, let us also denote by $\nu$ and $\nu_t$ the 
first marginal distributions of $\mu$ and $\mu_t$ respectively.
\begin{pro}\label{Relambda}
We have
\bq
\lim_{t\ri+\iy} \frac{1}{t}\ln(J(\mu_t,\mu))\ =\ 2l\ =\ \lim_{t\ri+\iy} \frac{1}{t}\ln(J(\nu_t,\nu))\eq
\end{pro}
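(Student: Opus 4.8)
The plan is to exploit the fully explicit Gaussian structure. Since $Z_0=0$ and (\ref{eds2}) is linear, $\mu_t=\cN(0,\Sigma_t)$ with $\Sigma_t\df\int_0^te^{sA}CC^*e^{sA^*}\,ds$, while $\mu=\cN(0,\Sigma)$ with $\Sigma\df\int_0^{+\iy}e^{sA}CC^*e^{sA^*}\,ds$; hypoellipticity gives $\Sigma_t\succ0$ for $t>0$ and $\Sigma\succ0$. The elementary but crucial observation, obtained by the change of variable $s\mapsto t+s$ in the defining integral, is
\[
R_t\ \df\ \Sigma-\Sigma_t\ =\ e^{tA}\,\Sigma\, e^{tA^*},
\]
a positive definite matrix tending to $0$ as $t\to+\iy$. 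Plugging the two centered Gaussian densities into (\ref{J}) and performing the Gaussian integral yields, as soon as $t$ is large enough that $2\Sigma_t^{-1}\succ\Sigma^{-1}$ (which holds eventually since $R_t\to0$), the closed form
\[
J(\mu_t,\mu)^2\ =\ \det\!\big(\Id-(R_t\Sigma^{-1})^2\big)^{-1/2}-1 ,
\]
and on the first marginal, with $\nu_t=\cN(0,\sigma_t^2)$, $\nu=\cN(0,\sigma^2)$, $\sigma_t^2=(\Sigma_t)_{1,1}$, $\sigma^2=\Sigma_{1,1}$, $r_t\df\sigma^2-\sigma_t^2=(R_t)_{1,1}$, the analogous one $J(\nu_t,\nu)^2=(1-(r_t/\sigma^2)^2)^{-1/2}-1$.

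Because $R_t\to0$, expanding $-\tfrac12\ln\det(\Id-M)=\tfrac12\trace(M)+O(\Vert M\Vert^2)$ shows that the two right-hand sides are equivalent, as $t\to+\iy$, to $\tfrac12\trace\big((R_t\Sigma^{-1})^2\big)$ and to $\tfrac12(r_t/\sigma^2)^2$ respectively. Hence it suffices to prove
\[
\lim_{t\to+\iy}\frac1t\ln\trace\!\big((R_t\Sigma^{-1})^2\big)\ =\ 4l\ =\ \lim_{t\to+\iy}\frac1t\ln r_t^2 .
\]
Since $\Sigma\succ0$, $\trace((R_t\Sigma^{-1})^2)=\Vert\Sigma^{-1/2}R_t\Sigma^{-1/2}\Vert_{\mathrm F}^2$ is bounded above and below by fixed multiples of $\Vert R_t\Vert^2$, and $\Vert R_t\Vert=\Vert e^{tA}\Sigma e^{tA^*}\Vert$ is bounded above and below by fixed multiples of $\Vert e^{tA}\Vert^2$. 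By the spectral mapping theorem $\Sp(e^{tA})=\exp(t\,\Sp(A))$, so the spectral radius of $e^{tA}$ equals $e^{lt}$, whence $\Vert e^{tA}\Vert\ge e^{lt}$, while the Jordan reduction of $A$ (treating the three cases $a>4b$, $a=4b$, $a<4b$ of the excerpt at once) gives $\Vert e^{tA}\Vert\le c_\varepsilon e^{(l+\varepsilon)t}$ for every $\varepsilon>0$. Therefore $\tfrac1t\ln\Vert e^{tA}\Vert\to l$, so $\tfrac1t\ln\trace((R_t\Sigma^{-1})^2)\to4l$ and $\tfrac1t\ln J(\mu_t,\mu)\to2l$.

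There remains the first-marginal rate: $r_t=(e^{tA^*}e_1)^*\Sigma(e^{tA^*}e_1)$, hence, using $\Sigma\succ0$ once more, $r_t\asymp\vert e^{tA^*}e_1\vert^2$, and everything reduces to $\tfrac1t\ln\vert e^{tA^*}e_1\vert\to l$. The bound $\le l$ is immediate; the matching lower bound — and this is the only place where the limit for $\nu$ could a priori be strictly smaller than that for $\mu$ — holds because $e_1$ has a non-zero component along the generalized eigenspace(s) of $A^*$ carrying the eigenvalue(s) of real part $l$. Indeed, when $a>4b$ the eigenvectors of $A^*$ are $(b+\lambda_\pm,1)^*$ and $e_1$ decomposes on this basis with coefficient $1/(\lambda_+-\lambda_-)\ne0$ on the slow one; when $a=4b$ the one-dimensional eigenspace is $\RR(-b,1)^*\not\ni e_1$, so $\vert e^{tA^*}e_1\vert\asymp t\,e^{lt}$; and when $a<4b$, the matrix $e^{-lt}e^{tA^*}$ is periodic with determinant $1$, hence has uniformly bounded inverse, so $\vert e^{tA^*}e_1\vert\ge c\,e^{lt}$. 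In all three cases $\tfrac1t\ln\vert e^{tA^*}e_1\vert\to l$, hence $\tfrac1t\ln r_t^2\to4l$ and $\tfrac1t\ln J(\nu_t,\nu)\to2l$. The main obstacle is precisely this last step: one must verify that the observable $X$ (the first coordinate) is not asymptotically blind to the slowest mode of $A$, for otherwise $\nu_t$ would relax to equilibrium strictly faster than $\mu_t$; the rest is bookkeeping with Gaussian densities and the Lyapunov representation of $\Sigma$.
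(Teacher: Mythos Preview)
Your proof is correct and follows a genuinely different, more structural route than the paper's. Both arguments share the same Gaussian identity for $J$ (the paper's Lemma~\ref{muwimu}, your closed form $J^2=\det(\Id-(R_t\Sigma^{-1})^2)^{-1/2}-1$). The divergence is in how $R_t\df\Sigma-\Sigma_t$ is controlled. The paper carries out the spectral decomposition of $A$ explicitly (its Lemma~\ref{angles}), obtaining $\Sigma_t=R_0-e^{-at}R_t$ with a periodic matrix-valued $R_t$ in the case $a<4b$, and then verifies the lower bound by checking that $R_t$ never vanishes (its first entry stays positive); the cases $a\ge4b$ are handled separately. You bypass all of this with the Lyapunov identity $R_t=e^{tA}\Sigma e^{tA^*}$, which reduces both limits to the elementary asymptotics $\tfrac1t\ln\lVert e^{tA}\rVert\to l$ and $\tfrac1t\ln\lvert e^{tA^*}e_1\rvert\to l$, and treats all three spectral regimes of $A$ uniformly via Jordan form. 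Your approach is cleaner and more portable; the paper's explicit computation of $\Sigma_t$ and $\Sigma$, while heavier, produces formulas that are reused later (e.g.\ Remark~\ref{R0}, Section~\ref{Eobast}). Your identification of the only genuine obstruction --- that $e_1$ must not be blind to the slowest mode of $A^*$ --- is exactly the content the paper establishes by its explicit nonvanishing check, and your case analysis of the eigenvectors of $A^*$ is correct.
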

These convergences can be extended to other measures of discrepancy, such as the square root of the relative entropy,
 or to initial distributions $\mu_0$ of $Z_0$ more general than the Dirac measure at 0, at least under the assumption that
 $J(\mu_0,\mu)<+\iy$.
 Thus if we look at $X_{r/\lve l\rve}$ for large $r>0$, it has almost forgotten that it started from 0 and its law is close
 to the Gaussian distribution $\nu$, up to an error $\exp(-(1+\circ(1))r)$.
 \par
Nevertheless, the periodicity features we are looking for appear only for $a<4b$, as it can be guessed from 
the existence of non-real eigenvalues, which suggests $2\pi/\omega$ as period, where
\bqn{w}
\omega&\df& \sqrt{ab-\frac{a^2}{4}}\eqn
In the regime where $b\gg a$, we have $\omega \gg 2\lve l\rve$: a lot of periods has to alternate before stationarity is approached.
This phenomenon is often encountered in the study  of ergodic Markov processes which are far from being reversible, e.g.\ a diffusion on a circle with a strong constant drift  (for instance turning clockwise).
\par
In order to quantify this behavior, we are interested in the return time $\tau$ to zero for $X$, which is of primary interest in the economic 
interpretation given at the beginning of the introduction (on the contrary to the relaxation time to equilibrium,
which seems very far away in the future):
\bqn{tau}
\tau&\df& \inf \{t\geq 0\st X_t=0\}\eqn
Of course it is no longer relevant to assume that $Z_0=0$ and instead we assume that $(X_0,Y_0)=(x_0,y_0)\in \RR_+^*\times \RR$.
In practice, $\tau$ appears through a temporal shift: we are at time $s>0$ which is such that $X_s>0$ and we are wondering
when in the future $X$ will return to its equilibrium position 0. 
Up to the knowledge of $(X_s,Y_s)$, the time left before this return has the same law as $\tau$ if $(x_0,y_0)$ is initialized with
the value $(X_s,Y_s)$. 
\\
The next result shows that up to universal factors, the exponential rate of concentration of $\tau$ is given by $1/\omega$, confirming that
when $b\gg a$, the return to zero happens much before the process reaches equilibrium.
\begin{theo}\label{T1}
For any $0<a<4b$ , $c>0$, $x_0>0$ and $y_0\in\RR$, we have 
\bq
 \ \PP_{(x_0,y_0)}[\tau>t]\ \leq \ 2\exp\lt(-\frac{\ln(2)}{\pi}{\omega}t\rt).
\eq
Furthermore, if {$(1+\frac{1}{\sqrt{2}})a\le b$}, there exists a quantity $\epsilon(x_0,y_0) >0$ (which in addition to $x_0$ and $y_0$,  depends on the parameters $a,b,c$)
such that
\bq
\fo t\geq 0,\qquad
\PP_{(x_0,y_0)}[\tau>t]
&\ge &\epsilon(x_0,y_0)\exp\lt(-4{\omega}t\rt).\
\eq
More generally, to any initial distribution $m_0$ on $D\df\{(x,y)\in\RR^2\st x>0\}$, we can associate a quantity $\epsilon(m_0)$
such that
\bq
\fo t\geq 0,\qquad
\PP_{m_0}[\tau>t]
&\ge &\epsilon(m_0)\exp\lt(-4{\omega}t\rt).\
\eq
\end{theo}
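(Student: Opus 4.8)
The plan is to establish the upper and lower bounds by completely different means: a soft ``half-period'' renewal estimate for the former, and an explicit Lyapunov--type supersolution for the latter.

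\emph{Upper bound.} The starting point is that the deterministic flow of $\dot z=Az$ over a half-period $\pi/\omega$ is a negative homothety. Since $A$ has eigenvalues $-a/2\pm i\omega$, one may write $e^{tA}=Se^{-at/2}R_tS^{-1}$ with $R_t$ the rotation of angle $-\omega t$, so that $e^{(\pi/\omega)A}=-e^{-a\pi/(2\omega)}\Id$. Writing $Z_t=e^{tA}z+G_t$ with $G_t\df\int_0^te^{(t-s)A}C\,dB_s$ a centred Gaussian vector, we get $X_{\pi/\omega}=-e^{-a\pi/(2\omega)}x_0+N$, where $N\df (G_{\pi/\omega})_1$ is a centred Gaussian whose variance $c^2\int_0^{\pi/\omega}((e^{sA})_{11})^2\,ds$ is positive because $C\neq0$. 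On the event $\{\tau>\pi/\omega\}$ one has $X_t>0$ on $[0,\pi/\omega]$ (intermediate value theorem), in particular $N>e^{-a\pi/(2\omega)}x_0>0$; hence $\PP_z[\tau>\pi/\omega]\le\PP[N>0]=1/2$ for every $z$ with $x_0\ge0$ (the case $x_0=0$ being trivial). Applying the Markov property at the times $k\pi/\omega$ and inducting on $n$ yields $\PP_z[\tau>n\pi/\omega]\le 2^{-n}$, and choosing $n=\lfloor\omega t/\pi\rfloor\ge \omega t/\pi-1$ gives $\PP_z[\tau>t]\le 2^{1-\omega t/\pi}=2\exp(-\tfrac{\ln 2}{\pi}\omega t)$.

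\emph{Lower bound.} Here I would use the following principle: if $f\in\cC^2(\overline D)$ is bounded with bounded first derivatives, is $\ge0$, vanishes on $\pa D=\{x=0\}$, is $>0$ on $D$, and satisfies $Lf+4\omega f\ge0$ on $D$ (with $L$ the generator (\ref{L})), then $s\mapsto e^{4\omega(s\wedge\tau)}f(Z_{s\wedge\tau})$ is a submartingale under $\PP_z$, whence $f(z)\le e^{4\omega t}\,\EE_z[f(Z_{t\wedge\tau})]=e^{4\omega t}\,\EE_z[f(Z_t);\tau>t]\le e^{4\omega t}\,(\sup f)\,\PP_z[\tau>t]$ (using $f=0$ on $\pa D$), so that $\PP_z[\tau>t]\ge(f(z)/\sup f)e^{-4\omega t}$; integrating against $m_0$ yields the general statement with $\epsilon(m_0)=\int f\,dm_0/\sup f>0$. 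It remains to produce such an $f$, and the natural candidate is $f(x,y)=x\exp(-q(x,y))$ with $q$ a positive-definite quadratic \emph{whose centre is shifted away from the origin} (i.e.\ $q$ carries linear terms). Then $Lf+4\omega f=xe^{-q}\,\Phi$ with $\Phi$ a polynomial of degree $3$ whose top part is $x$ times a quadratic form; the requirement $Lf\ge0$ on $\{x=0\}$ pins down the $xy$-coefficient of $q$ and imposes a sign on its linear $x$-coefficient, and the remaining freedom in $q$ must then be used to enforce $\Phi\ge0$ on $\overline D$, which boils down to a finite system of polynomial inequalities (semidefiniteness of a $2\times2$ matrix together with sign conditions) in the coefficients of $q$. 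The hypothesis $(1+\tfrac1{\sqrt 2})a\le b$ is exactly the threshold at which this system admits a solution compatible with the rate $4\omega$.

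The main obstacle is this last step — the explicit construction and verification of the supersolution with the sharp constant $4$. Already for a \emph{centred} Gaussian $q$ the inequalities cannot be met when $b$ is much larger than $a$ (one is pushed to a rate of order $b$ instead of $\omega$), so the displacement of the centre is genuinely needed, and the role of $(1+\tfrac1{\sqrt 2})a\le b$ is to guarantee, after the tuning, both the positive-definiteness of $q$ and the nonnegativity of $\Phi$. A more probabilistic alternative would trade the supersolution for a Markov iteration: fix a compact ``reservoir'' $K\subset D$ at the scale of the noise accumulated over one half-period and prove $\inf_{z\in K}\PP_z[\tau>\pi/\omega,\ Z_{\pi/\omega}\in K]\ge e^{-4\pi}$, then iterate after an initial cost $\epsilon(m_0)$ to reach $K$; there too, the explicit one-step lower bound is where the work concentrates.
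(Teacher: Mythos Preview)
Your upper bound is correct and is essentially the paper's argument (Proposition~\ref{upper1erresult}): since $e^{(\pi/\omega)A}=-e^{-a\pi/(2\omega)}\Id$, one has $\EE_z[X_{\pi/\omega}]<0$ whenever $x_0>0$, and symmetry of the Gaussian gives $\PP_z[\tau>\pi/\omega]\le 1/2$; iterate via the Markov property.

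For the lower bound, your submartingale framework is correct and is the content of Proposition~\ref{prodynkin} (slightly sharper, in fact, since you would get the explicit prefactor $\epsilon=f(z)/\sup f$ directly). The gap is, as you say yourself, the construction of $f$, and here the paper's route is substantively different from your ansatz $f(x,y)=x\,e^{-q(x,y)}$ on all of $D$.

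What the paper does: pass to the coordinates of Corollary~\ref{cor:trmod} so that the generator becomes $\cL_\rho$ with $\rho=a/(2\omega)$, then to polar coordinates. It does \emph{not} build a supersolution on the whole half-plane; instead (Proposition~\ref{prophyplow}) it takes $g(r,\theta)=r^2\gamma(\theta)e^{-r^2/2}$ with a piecewise $\gamma$ on the \emph{smaller} sector $\cS=\{-\pi/2<\theta<\pi/4\}$, obtaining $\cL_\rho g\ge -(3+2\rho)g$ there. The extension from $\cS$ to the full half-plane is done by a controllability argument (Lemma~\ref{lem:trajcontrol} together with the support theorem): from any starting point the process can be steered into $\cS$ without leaving the half-plane with positive probability, and this cost goes into $\epsilon(m_0)$. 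The hypothesis $(1+1/\sqrt2)a\le b$ enters only to guarantee $\kappa=(b-a/2)/\omega\ge1$, i.e.\ that $\cS$ sits inside the transformed half-plane; the rate obtained is $(3+a/\omega)\omega\le 4\omega$.

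The reason a supersolution on the full half-plane is problematic is precisely the hypoellipticity: in the transformed polar coordinates, near $\theta=\pi/2$ the diffusion acts only radially, so the angular drift carries the process across the boundary unimpeded and the term playing the role of $\psi_2$ cannot be kept bounded below (the paper states this explicitly at the start of Subsection~\ref{subsec:hypo}). In the elliptic toy model of Subsection~\ref{subsec:ellip} a function of your type \emph{does} work---indeed in Cartesian coordinates it is $x\,e^{-q}$ with $q$ piecewise quadratic and centred at the origin---but this breaks down in the degenerate case. Your proposed shift of the centre of $q$ does not address this obstruction, and your claim that the threshold $(1+1/\sqrt2)a\le b$ is ``exactly'' where your polynomial system becomes feasible is unsupported; in the paper that threshold arises for a geometrically different reason.

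Your ``more probabilistic alternative'' at the end is close in spirit to the paper's second proof via bridges (Proposition~\ref{lowerboundprob}), but there too the one-step estimate is where all the work lies, and the constants obtained are not explicit.
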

Surprisingly, we found the lower bound more difficult to obtain than the upper bound, while
in reversible situations it is often the opposite which is experienced. \par
\begin{rem}\label{persistence}
In the first appendix it will be shown that  a quasi-stationary probability $\nu^D$ and a corresponding rate $\lambda_0(D)>0$ can be associated
to $D$: the support of $\nu^D$ is the closure of $D$ and 
under $\PP_{\nu^D}$, $\tau$ is distributed as an exponential random variable of parameter $\lambda_0(D)$:
\bqn{persistence1}
\fo t\geq 0,\qquad 
\PP_{\nu^D}[\tau\geq t]&=&\exp(-\lambda_0(D)t)\eqn
In the sequel, the quantity $\lambda_0(D)$ will be called the persistence rate of $D$.
It can be seen as  the smallest eigenvalue (in modulus) of the underlying Markov generator 
with a Dirichlet condition on the boundary of the domain $D$, when it is interpreted as acting on $\LL^2(\mu^D)$,
where $\mu^D$ is the restriction of $\mu$ on $D$.
 The above theorem then provides
lower and upper bounds on $\lambda_0(D)$, essentially proportional  to $\omega$: at least for $0<(1+\frac{1}{\sqrt{2}})a\le b$,
\bqn{persistence2}
\frac{\ln(2)}{\pi}\omega\ \leq \ \lambda_0(D)\ \leq \ 4\omega\eqn
According to figure \ref{fig:return}, starting from other initial distributions on $D$, the law of $\tau$ will no longer be exponential,
nevertheless
we believe that for any $(x_0,y_0)\in D$, the following limit takes place
\bq
\lim_{t\ri+\iy}\frac1t\ln(\PP_{x_0,y_0}[\tau>t])&=&-\lambda_0(D)\eq
The difficulty in obtaining this convergence stems from the non-reversibility of the process under consideration. 
In the literature, it is the reversible and elliptic situations which are the most thoroughly investigated.
For a general reference on quasi-stationarity, see e.g.\ the book \cite{MR2986807} of Collet, Mart{\'{\i}}nez and San
              Mart{\'{\i}}n, as well as the bibliography therein.
              \end{rem}
\par\sm
The previous result provides a good picture for large values of $\tau$, but is there a precursor sign that $\tau$ will be much shorter than expected? Indeed we cannot miss it, because in this situation of a precocious return to zero, the system has a strong tendency  to first explode!
To give a rigorous meaning of this statement, we need to introduce the bridges associated to $Z$.
For $z,z'\in\RR^2$ and $T>0$, denote by $\PP^{(T)}_{z,z'}$ the law of the process $Z$ evolving according to (\ref{eds2}),
conditioned by the event $\{Z_0=z,\ Z_T=z'\}$. Note that there is no difficulty to condition by this negligible set, because
the process $Z$ starting from $z$ is Gaussian and the law of $Z_T$ is non-degenerate. \\

For fixed $z,z'\in\RR^2$ and $T>0$ small, we are interested in the behavior of
$\xi^{(T)}\df(\xi_t^{(T)})_{t\in[0,1]}$,  the process defined by
\bq
\fo t\in[0,1],\qquad 
\xi_t^{(T)}&\df& TZ_{Tt}\eq
Let us define the trajectory $\varphi_{z,z'}\st [0,1]\ri \RR^2$ by
\bqn{limit_bridge}
\fo t\in[0,1],\qquad \varphi_{z,z'}(t)&\df&
\lt( \begin{array}{c}
\frac{6 \omega}{b^2} t (1-t) (y-y')\\
0\end{array}\rt)
 \eqn
 where $z=(x,y)$ and $z'=(x',y')$.
\par
\begin{theo}\label{T2}
For fixed $z,z'\in\RR^2$, as $T$ goes to $0_+$, $\xi^{(T)}$ converges in probability (under  $\PP^{(T)}_{z,z'}$) toward the deterministic trajectory
$\varphi_{z,z'}$, with respect to the uniform norm on $\cC([0,1],\RR^2)$.
\end{theo}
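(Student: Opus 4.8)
The plan is to use that, under $\PP^{(T)}_{z,z'}$, the process $(Z_{Tt})_{t\in[0,1]}$ is Gaussian, so that $\xi^{(T)}$ is Gaussian and it suffices to treat its mean and its covariance separately. To obtain a tractable description of the conditional law I would first observe that $e^{-tA}Z_t=z+\wi M_t$, where $\wi M_t\df\int_0^t e^{-uA}C\,dB_u$ is a continuous Gaussian martingale with bracket $P_t\df\int_0^t e^{-uA}CC^*e^{-uA^*}\,du$; conditioning $Z$ on $\{Z_0=z,\ Z_T=z'\}$ is the same as conditioning $\wi M$ on $\{\wi M_0=0,\ \wi M_T=e^{-TA}z'-z\}$, and the Gaussian regression formulas give
\[
m^{(T)}_t\ \df\ \EE^{(T)}_{z,z'}[Z_t]\ =\ e^{tA}\lt(z+P_tP_T^{-1}(e^{-TA}z'-z)\rt)
\]
together with $\Var^{(T)}_{z,z'}(Z_t)=e^{tA}(P_t-P_tP_T^{-1}P_t)e^{tA^*}$, valid for $0\le t\le T$. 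The theorem then splits into (i) $\sup_{t\in[0,1]}\lve T\,m^{(T)}_{Tt}-\varphi_{z,z'}(t)\rve\to0$, which is deterministic, and (ii) $\sup_{t\in[0,1]}\lve T(Z_{Tt}-m^{(T)}_{Tt})\rve\to0$ in $\PP^{(T)}_{z,z'}$-probability.

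For (i), I would Taylor-expand $e^{-uA}CC^*e^{-uA^*}$ and integrate, obtaining $P_s=sCC^*-\frac{s^2}{2}(ACC^*+CC^*A^*)+\frac{s^3}{6}(A^2CC^*+2ACC^*A^*+CC^*(A^*)^2)+O(s^4)$. The crucial feature is that $P_s$ degenerates as $s\to0$: since $CC^*$ has rank one and the second coordinate only feels the noise through the drift, the $(1,1),(1,2),(2,2)$ entries scale like $s,s^2,s^3$, so $\dete P_s\asymp s^4$ and a crude bound on $P_T^{-1}$ would diverge like $T^{-3}$. This is cured by conjugating with $R_s\df\mathrm{diag}(s^{1/2},s^{3/2})$: one checks that $R_s^{-1}P_sR_s^{-1}$ converges, uniformly on bounded $s$-intervals, to a fixed matrix $\wit P$ whose entries are read off from the expansion and whose determinant is a strictly positive constant. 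Writing $P_{Tt}P_T^{-1}=R_{Tt}\,(R_{Tt}^{-1}P_{Tt}R_{Tt}^{-1})\,(R_{Tt}R_T^{-1})\,(R_T^{-1}P_TR_T^{-1})^{-1}\,R_T^{-1}$ with $R_{Tt}R_T^{-1}=\mathrm{diag}(t^{1/2},t^{3/2})$, one finds that the four entries of $P_{Tt}P_T^{-1}$ are of orders $1,\,T^{-1},\,T,\,1$; after multiplying by $T$ and by $e^{-TA}z'-z=(z'-z)+O(T)$, only the $(1,2)$-block survives in the limit, it feeds only the first coordinate, and an elementary $2\times2$ computation with $\wit P$ turns it into the quadratic-in-time, linear-in-$(y-y')$ profile $\varphi_{z,z'}$, the prefactor $e^{tA}=\Id+O(T)$ and all remaining terms being $O(T)$ uniformly in $t\in[0,1]$.

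For (ii), put $G^{(T)}_t\df T(Z_{Tt}-m^{(T)}_{Tt})$, a centered Gaussian process under $\PP^{(T)}_{z,z'}$ with $G^{(T)}_0=G^{(T)}_1=0$. Because conditioning does not increase variance (in the Loewner order), for $s\le t$ one has $\Var^{(T)}_{z,z'}(Z_{Tt}-Z_{Ts})\preceq C\,\lVe e^{TtA}\rVe^2\int_{Ts}^{Tt}\lVe e^{-uA}CC^*e^{-uA^*}\rVe\,du\preceq C'\,T(t-s)\,\Id$ for $T\le1$ (the fact that $Z_{Tt}-Z_{Ts}$ is not exactly $e^{TsA}(\wi M_{Tt}-\wi M_{Ts})$ only produces terms of smaller order). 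Hence $\Var^{(T)}_{z,z'}(G^{(T)}_t)\preceq C'T\,\Id$ uniformly in $t$, while the canonical metric of $G^{(T)}$ satisfies $\Var^{(T)}_{z,z'}(G^{(T)}_t-G^{(T)}_s)\preceq C'\lve t-s\rve\,\Id$ uniformly in $T\le1$. The second bound controls the metric entropy and the first forces the $\LL^2$-diameter of $[0,1]$ to be $O(\sqrt T)$; plugging these into Dudley's (or Fernique's) maximal inequality yields $\EE^{(T)}_{z,z'}[\sup_{t\in[0,1]}\lve G^{(T)}_t\rve]\to0$, hence (ii). Combining (i) and (ii) through the triangle inequality gives the uniform convergence in probability of $\xi^{(T)}$ toward $\varphi_{z,z'}$.

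I expect the main obstacle to be exactly the degeneracy above: the invariant directions of the free covariance $P_T$ are stretched at the incommensurate rates $T$ and $T^3$, and the matrix bookkeeping has to be done carefully enough to exhibit the cancellation that keeps $T\,m^{(T)}_{Tt}$ bounded and pins down the precise profile $\varphi_{z,z'}$; everything else is soft. As a sanity check and a source of the right constants, note that after the time rescaling $\xi^{(T)}$ solves $d\xi=TA\xi\,dt+T^{3/2}C\,dW$ conditioned to run from $Tz$ to $Tz'$; since the second coordinate carries no noise, its equation forces $\int_0^1(\xi^{(T)})_x\,dt$ to converge to a multiple of $y-y'$, and minimising the $\LL^2$-energy of the first coordinate subject to vanishing endpoints and this prescribed integral selects precisely the parabola $\varphi_{z,z'}$ — the large-deviations shadow of the Gaussian computation. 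An alternative route would prove convergence of all finite-dimensional projections by the same asymptotics and then establish tightness in $\cC([0,1],\RR^2)$ from the increment bound above, but the global Gaussian estimate seems the most economical.
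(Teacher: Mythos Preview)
Your proposal is correct and follows the same overall skeleton as the paper---split into conditional mean and centered Gaussian fluctuation, then exploit the anisotropic small-time scaling of the free covariance---but the implementation is different. The paper first passes to the canonical form of Corollary~\ref{cor:trmod} (noise on the second coordinate, $A=-\rho I_2+J_2$), writes the bridge law via the product $p_t(z_0,\cdot)p_{T-t}(\cdot,z_T)$, and in Proposition~\ref{uuu} expands the resulting mean $\eta^{(T)}_{uT}$ and covariance $\sigma^{(T)}_{uT}$ entry by entry, obtaining the limit for the normalized process and then transferring back by the linear change of variables. You work directly with the original $A,C$, use the martingale representation $e^{-tA}Z_t=z+\wi M_t$ together with Gaussian regression to get the closed formulas $m^{(T)}_t=e^{tA}(z+P_tP_T^{-1}(e^{-TA}z'-z))$ and $\Var=e^{tA}(P_t-P_tP_T^{-1}P_t)e^{tA^*}$, and handle the hypoelliptic degeneracy through the scaling conjugation $R_s=\mathrm{diag}(s^{1/2},s^{3/2})$ so that $R_s^{-1}P_sR_s^{-1}\to\wit P$; this is more structural and would transfer with little change to other hypoelliptic Ornstein--Uhlenbeck bridges, whereas the paper's computations are tailored to the specific $2\times2$ model. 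For the fluctuation, the paper is terse (it records the pointwise limit $\sigma^{(T)}_{uT}\to0$ and only later, in Proposition~\ref{lowerboundprob}, invokes a Gaussian supremum bound from Adler to control the whole trajectory); your route---conditioning does not increase variance, hence an increment bound of order $T(t-s)$ for $Z_{Tt}-Z_{Ts}$, then Dudley's inequality with $\LL^2$-diameter $O(\sqrt{T})$---is self-contained and yields uniform convergence directly. The variational heuristic you add at the end (the noise-free second equation pins $\int_0^1(\xi^{(T)})_x\,dt$, and the energy minimiser under that constraint with zero endpoints is the parabola) is not in the paper but is a clean way to anticipate the exact profile before doing the matrix algebra.
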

In particular, if $z,z'\in\RR^2$ are such that $\Re(z)>0$, $\Re(z')\leq 0$ and $\Im(z)\not=\Im(z')$, the bridge $(Z_t)_{t\in[0,T]}$
relying $z$ to $z'$ for small $T>0$ explodes as $1/T$.
From the definition of $\varphi_{z,z'}$ given in \eqref{limit_bridge}, we can see that the explosion is in the $x$-direction, toward $+\iy$ or $-\iy$, depending on the sign of $\Im(z)-\Im(z')$,
as it is illustrated by the pictures of Figure \ref{fig:bridge}.

\begin{figure}[h]
 \centering
\includegraphics[height=4cm]{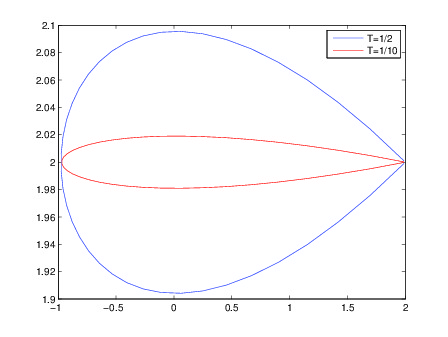}
\includegraphics[height=4cm]{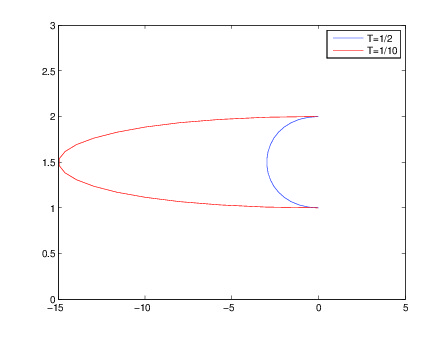}
\includegraphics[height=4cm]{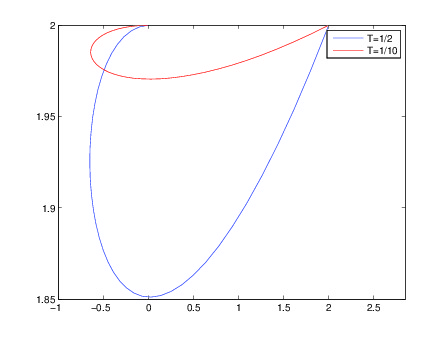}
\caption{\label{fig:bridge}  Expected trajectories of the hypo-elliptic bridge from $z$ to $z'$ within time $1/2$ and within small time $1/10$.
Left: Non explosion when $z'=z$. Middle: explosion when $\Re(z)= \Re(z')$ and $\Im(z)\not=\Im(z')$. Right: Non explosion when $\Im(z)=\Im(z')$ and $\Re(z)\not= \Re(z')$.}
\end{figure}

\begin{rem} Note that the sharp behavior of the bridge when $T\rightarrow 0$ leads in Section \ref{Eobast} to a probabilistic proof of a lower-bound for $\PE(\tau>t)$ (see Proposition \ref{lowerboundprob}). The interest of this alternative proof is  that the approach is maybe more intuitive. However, we have not been able to provide some explicit constants following this method.
\end{rem}

\par\me
The paper is constructed on the following plan. In next section we present the preliminaries on $Z$, especially its Gaussian
features which enable to obtain Proposition \ref{Relambda}. We will also see how to parametrize the process $Z$ under a simpler form.
The exit time  defined in (\ref{tau}) is investigated in Section \ref{Dee}, where Theorem \ref{T1} is obtained.
Section \ref{Eobast} is devoted to the study of bridges and to the proof of Theorem \ref{T2}. At last, we shortly discuss in Section \ref{sec:stat} on a numerical and statistical estimation problems related to the estimation of $\omega$.

\section{Preliminaries and simplifications}\label{pas}
This section contains some basic results about the Ornstein-Uhlenbeck diffusion $Z$  described by (\ref{eds2}) and whose coefficients
are given by (\ref{AC}).
\subsection{Gaussian computations}\label{Gc}
Our main goal here is to prove Proposition \ref{Relambda}. 
\par\sm
We begin by checking that the process $Z$ is Gaussian. 
Indeed, considering the process $\wi Z$
defined by
\bq
\fo t\geq 0,\qquad
\wi Z_t &\df& \exp(-At)Z_t\eq
we get that
\bq
\fo t\geq 0,\qquad d\wi Z_t\ =\ \exp(-At)(-AZ_t \,dt+dZ_t)\ =\ \exp(-At)C\, dB_t\eq
It follows that
\bqn{Zt}
\fo t\geq 0,\qquad Z_t&=& \exp(A t)Z_0+\int_0^t\exp(A(t-s)) C\, dB_s\\
\nonumber &=&\int_0^t\exp(A(t-s)) C\, dB_s
\eqn
since we assumed that $Z_0=0$.
It appears on this expression that for any $t\geq 0$, the law of $Z_t$ is a Gaussian distribution
of mean 0 and variance matrix $\Sigma_t$ given by
\bqn{Sigmat}
\nonumber\Sigma_t&\df&\int_0^t\exp(A(t-s)) C C^* \exp(A^*(t-s))\, ds\\
&=&
\int_0^t\exp(As) C C^* \exp(A^*s)\, ds\eqn
For $a,b>0$, the eigenvalues of $A$ have negative real parts, so that the above rhs\ converges
as $t$ goes to infinity toward a symmetric positive definite matrix $\Sigma$. As announced in the introduction,
the Gaussian distribution $\mu$ of mean 0 and variance $\Sigma$ is then an invariant measure for the
evolution (\ref{eds2}). It  is a consequence of the fact that the underlying semi-group is Fellerian 
(i.e.\ it preserves the space of bounded continuous functions), as it can be seen from (\ref{Zt}),
where $Z_t$ depends continuously on $Z_0$, for any fixed $t\geq 0$.
Note furthermore that the above computations show  that for any initial law of $Z_0$,
 the law of $Z_t$ converges toward $\mu$ for large $t$, because $ \exp(A t)Z_0$
 converges almost surely toward 0. It follows that $\mu$ is the unique invariant measure associated to
 (\ref{eds2}).
 To obtain more explicit expressions for the above variances, we need the spectral decomposition of $A$.
 The characteristic polynomial of $A$ being $X^2+aX+ab$, we immediately obtain the results presented in the beginning of Subsection \ref{r}
about the eigenvalues of $A$. Let us treat in detail the case $a<4b$, which is the most interesting for us: there are
two conjugate eigenvalues, $\lambda_{\pm}=l\pm \omega i$, where $l=-a/2$ (see (\ref{l}))  and $\omega$ is defined in
 (\ref{w}).
 \begin{lem}\label{angles}
 If $a<4b$, there exist two angles $\alpha \in (\pi/2,3\pi/2)$ and $\beta\in [0,2\pi)$ such that 
 for any $t\geq 0$,
 \bq
 \Sigma_t&=& R_0-\exp(-at) R_t\eq
 where
 \bq R_t&\df& \frac{c^2}{4ab-a^2}
\lt( \begin{array}{cc}
 \frac{b^2}{a}\lt(2+\sqrt{\frac{a}{b}}\cos(2\beta- \alpha-2\omega t)\rt)&
 \frac{b}{a}\lt(2\cos(\beta)+\cos(\beta- \alpha-2\omega t)\rt)\\
 \frac{b}{a}\lt(2\cos(\beta)+\cos(\beta- \alpha-2\omega t)\rt)& 
 \frac{b^4}{a}\lt(2+\sqrt{\frac{a}{b}}\cos(- \alpha-2\omega t)\rt) \end{array}\rt)
\eq
Passing to the limit as $t\ri+\iy$, we get
$\Sigma=R_0$ and we deduce more precisely that
\bq
\Sigma &=& \frac{c^2}{2a^2} \left( 
 \begin{matrix}
 {a+b}  & - {b^2} \\ - {b^2}  & {b^3}
 \end{matrix}
\right).
\eq
\end{lem}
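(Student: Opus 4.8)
The plan is to split $\Sigma_t$ into its limiting value and an exponentially damped oscillating remainder, using the explicit exponential of $A$. Since $a,b>0$ the eigenvalues of $A$ have negative real part, so the integral in (\ref{Sigmat}) converges as $t\to+\iy$ to the finite matrix $\Sigma$; substituting $s\mapsto s+t$ in $\int_t^{+\iy}\exp(As)CC^*\exp(A^*s)\,ds$ identifies it with $\exp(At)\Sigma\exp(A^*t)$, whence $\Sigma_t=\Sigma-\exp(At)\Sigma\exp(A^*t)$. For $a<4b$ the characteristic polynomial $X^2+aX+ab$ has roots $l\pm i\omega$ with $l=-a/2$ and $\omega$ as in (\ref{w}), so Cayley--Hamilton gives $(A-lI)^2=-\omega^2I$ and hence $\exp(At)=e^{lt}\bigl(\cos(\omega t)I+\omega^{-1}\sin(\omega t)(A-lI)\bigr)$. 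Because $2l=-a$, this yields $\Sigma_t=R_0-\exp(-at)R_t$ with $R_t\df M_t\Sigma M_t^*$ and $M_t\df\cos(\omega t)I+\omega^{-1}\sin(\omega t)(A-lI)$; in particular $M_0=I$, so $R_0=\Sigma$.

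I would then expand $R_t=M_t\Sigma M_t^*$ as a quadratic form in $\cos(\omega t)$ and $\sin(\omega t)$ and linearise it with $\cos^2(\omega t)=\tfrac12(1+\cos2\omega t)$, $\sin^2(\omega t)=\tfrac12(1-\cos2\omega t)$ and $2\sin(\omega t)\cos(\omega t)=\sin2\omega t$; each entry then becomes a constant plus a linear combination of $\cos2\omega t$ and $\sin2\omega t$, which one rewrites in amplitude--phase form $\rho\cos(2\omega t-\theta)$ and simplifies with the identities $a^2+4\omega^2=4ab$ and $\omega^2+(b-\tfrac a2)^2=b^2$. The transparent way to see that a single pair of angles controls all three entries is to diagonalise $A$ over $\CC$: writing $\exp(As)C=2\,\Re(e^{\lambda s}w)$ with $\lambda=l+i\omega$ and an explicit $w\in\CC^2$, the integrand $(\exp(As)C)(\exp(As)C)^*$ splits into a term proportional to $e^{2i\omega s}$ and a $t$-independent term, and integrating in $s$ returns precisely the decomposition of the lemma once the two emerging phases are named $\alpha$ and $\beta$. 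Here $\alpha$ is characterised by $\cos\alpha=-\tfrac12\sqrt{a/b}$ with a branch in $(\pi/2,3\pi/2)$ (forced by $l<0$, equivalently by the argument of $a-2i\omega$), and $\beta$ is read off from the two coordinates of $w$; the phases occurring in the $(1,1)$, $(1,2)$ and $(2,2)$ entries are $2\beta-\alpha$, $\beta-\alpha$ and $-\alpha$, and matching the amplitudes produces the displayed $R_t$.

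For the closed form of $\Sigma=R_0$ it is quickest to use stationarity rather than to set $t=0$ in $R_t$: differentiating (\ref{Sigmat}) gives $\pa_t\Sigma_t=A\Sigma_t+\Sigma_tA^*+CC^*$, and letting $t\to+\iy$ (with $\Sigma_t\to\Sigma$) yields the Lyapunov equation $A\Sigma+\Sigma A^*+CC^*=0$. Writing $\Sigma=\lt(\begin{array}{cc}p&q\\ q&r\end{array}\rt)$, the $(2,2)$ component gives $r=-bq$, the $(1,2)$ component then gives $q=-b^2p/(a+b)$, and the $(1,1)$ component gives $p=c^2(a+b)/(2a^2)$, so
\[\Sigma\ =\ \frac{c^2}{2a^2}\lt(\begin{array}{cc}a+b&-b^2\\ -b^2&b^3\end{array}\rt),\]
as claimed.

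The only genuinely laborious step is the trigonometric normalisation in the second paragraph: performing the linearisation cleanly and, above all, checking that the three a priori independent phases collapse onto one pair $(\alpha,\beta)$ in arithmetic progression, while pinning down the correct branch of $\alpha$. There is no conceptual obstruction --- just careful bookkeeping with the two quadratic identities above.
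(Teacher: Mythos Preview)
Your argument is correct, and the overall architecture is close to the paper's, but you organise the computation differently in two places worth noting. First, you obtain the splitting $\Sigma_t=\Sigma-\exp(At)\Sigma\exp(A^*t)$ abstractly by the change of variable $s\mapsto s+t$ in the tail integral, and only afterwards expand $\exp(At)$ via Cayley--Hamilton; the paper instead integrates $\exp(As)CC^*\exp(A^*s)$ term by term using the complex diagonalisation $A=M\triangle M^{-1}$ and reads off $\Sigma$ and $R_t$ simultaneously from the result. Your route makes the structure $R_t=M_t\Sigma M_t^*$ visible from the start, at the cost of having to compute $\Sigma$ separately. Second, for that separate computation you solve the Lyapunov equation $A\Sigma+\Sigma A^*+CC^*=0$, which is quicker and more robust than the paper's method of evaluating $R_0$ via the explicit values $\cos\alpha=-\tfrac12\sqrt{a/b}$, $\cos\beta=(a-2b)/(2b)$, etc. Where the two approaches converge is precisely the point you flag as laborious: the identification of the angles. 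The paper handles this by writing $\lambda_-=\sqrt{ab}\,e^{i\alpha}$ and $z\df\lambda_++a-b=b\,e^{i\beta}$, so that the three entries of $R_t$ come out directly as real parts of $z^2/\lambda_-$, $z/\lambda_-$ and $1/\lambda_-$ times $e^{-2i\omega t}$; this is exactly your ``diagonalise over $\CC$'' alternative, and is indeed the cleanest way to see why the phases line up as $2\beta-\alpha$, $\beta-\alpha$, $-\alpha$.
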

\proof
From the first line of the matrix $A$, we deduce that an eigenvector associated to $\lambda_\pm$ is $(1,\lambda_\pm +a-b)^*$.
So writing
\bq 
\tr\ \df\ \lt( \begin{array}{cc}
\lambda_- &0\\
0& \lambda_+ \end{array}\rt)
&\hbox{ and }&
M\ \df\ \lt( \begin{array}{cc}
1&1\\
\lambda_- +a-b&\lambda_+ +a-b \end{array}\rt)
\eq
we have that $A=M\tr M^{-1}$, where
\bq
M^{-1}\ =\ \frac1{\lambda_+-\lambda_-}\lt( \begin{array}{cc}
\lambda_+ +a-b&-1\\
-\lambda_- +b-a &1 \end{array}\rt)
\eq
In view of (\ref{Sigmat}), we need to compute for any $s\geq 0$,
\bq
\exp(As) C C^* \exp(A^*s)&=&M\exp(s\tr)M^{-1}CC^*(M^*)^{-1}\exp(s \tr^*)M^*\eq
where ${}^*$ is now the conjugate transpose operation.
A direct computation leads to 
\bq
\frac{\lambda_+-\lambda_-}{c}M\exp(s\tr)M^{-1}C\ =\ \lt(\begin{array}{c}
\sigma_1\\
\sigma_2\end{array}\rt)\ \df\ \lt(\begin{array}{c}
z\exp(s\lambda_-)-\bar{z}\exp(s\lambda_+) \\
\lve z\rve^2(\exp(s\lambda_-)-\exp(s\lambda_+))\end{array}\rt)
\eq
where $z\df \lambda_++a-b=a/2-b+i\omega$ and $\overline{z}:=a-b+\lambda_-=a/2-b-i\omega$.
So we get that 
\bq
\exp(As) C C^* \exp(A^*s)
&=&\frac{c^2}{\lve \lambda_+-\lambda_-\rve^2}
\lt( \begin{array}{cc}
\lve \sigma_1\rve^2&\sigma_1\overline{\sigma_2}\\
\sigma_2 \overline{\sigma_1}&\lve \sigma_2\rve^2 \end{array}\rt)
\\
&=&
\frac{c^2}{4\omega^2}
\lt( \begin{array}{cc}
2\lve z\rve ^2e^{2ls}-2\Re(z^2e^{2\lambda_-s})&2\Re(z)e^{2ls}-2\Re(ze^{2\lambda_-s})\\
2\Re(z)e^{2ls}-2\Re(ze^{2\lambda_-s})&2\lve z\rve ^4(e^{2ls}-\Re(e^{2\lambda_-s}) )\end{array}\rt)
\eq
Integrating this expression with respect to $s$, we obtain, first for any $t\geq 0$,
\bq \Sigma_t&=& 
\frac{c^2}{4\omega^2}
\lt( \begin{array}{cc}
\lve z\rve ^2\frac{e^{2lt}-1}{l}-\Re(z^2\frac{e^{2\lambda_-t}-1}{\lambda_-})&\Re(z)\frac{e^{2lt}-1}{l}-\Re(z\frac{e^{2\lambda_-t}-1}{\lambda_-})\\
\Re(z)\frac{e^{2lt}-1}{l}-\Re(z\frac{e^{2\lambda_-t}-1}{\lambda_-})&\lve z\rve ^4(\frac{e^{2lt}-1}{l}-\Re(\frac{e^{2\lambda_-t}-1}{\lambda_-}) )\end{array}\rt)
\eq
and next, recalling that $\Re(\lambda_-)=\Re(\lambda_+)=l<0$,
\bq \Sigma := \lim_{t \rightarrow + \infty} \Sigma_t&=& 
\frac{c^2}{4\omega^2}
\lt( \begin{array}{cc}
-\lve z\rve ^2\frac{1}{l}+\Re(z^2\frac{1}{\lambda_-})&-\Re(z)\frac{1}{l}+\Re(z\frac{1}{\lambda_-})\\
-\Re(z)\frac{1}{l}+\Re(z\frac{1}{\lambda_-})&-\lve z\rve ^4(\frac{1}{l}-\Re(\frac{1}{\lambda_-}) )\end{array}\rt)
\eq
Thus it appears that
\bq
\fo t\geq 0,\qquad \Sigma_t&=&\Sigma-e^{-at}R_t\eq
where the last term is the sinusoidal matrix   defined by
\bq 
R_t&\df& 
\frac{c^2}{4\omega^2}
\lt( \begin{array}{cc}
-\lve z\rve ^2\frac{1}{l}+\Re(z^2\frac{e^{-2\omega i t}}{\lambda_-})&-\Re(z)\frac{1}{l}+\Re(z\frac{e^{-2\omega i t}}{\lambda_-})\\
-\Re(z)\frac{1}{l}+\Re(z\frac{e^{-2\omega i t}}{\lambda_-})&-\lve z\rve ^4(\frac{1}{l}-\Re(\frac{e^{-2\omega i t}}{\lambda_-}) )\end{array}\rt)
\eq
Note that $\Sigma=R_0$ (this can also be deduced from $\Sigma_0=0$). To recover the matrices given in the statement of the lemma,
we remark that $\lve \lambda_-\rve^2 =ab$ and $\lve z\rve^2=b^2$, so there exist angles $\alpha,\beta\in [0,2\pi)$ such that
\bq
\lambda_-\ =\ \sqrt{ab}\exp(i\alpha)&\hbox{ and }& z\ =\ b\exp(i\beta)\eq
Since $\Re(\lambda_-)<0$, we have $\alpha \in (\pi/2, 3\pi/2)$ and the first announced results follow at once.
Concerning the more explicit computation of $\Sigma$, just take into account that
\bq
\cos(\alpha)\ =\ -\frac{\sqrt{a}}{2\sqrt{b}}&\hbox{ and }&\sin(\alpha)\ =\ -\frac{\omega}{\sqrt{ab}}\\
\cos(\beta)\ =\ \frac{a-2b}{2b}&\hbox{ and }&\sin(\beta)\ =\ \frac{\omega}{b}\eq
and expand the matrix
\bq
R_0&=&
 \frac{c^2b}{(4b-a)a^2}
\lt( \begin{array}{cc}
 b\lt(2+\sqrt{\frac{a}{b}}\cos(2\beta- \alpha)\rt)&
\lt(2\cos(\beta)+\cos(\beta- \alpha)\rt)\\
\lt(2\cos(\beta)+\cos(\beta- \alpha)\rt)& 
 {b^3}\lt(2+\sqrt{\frac{a}{b}}\cos( \alpha)\rt) \end{array}\rt)
\eq
\wwtbp
\begin{rem}\label{R0}
In particular, it appears that for large time $t\geq 0$, $X_t$ converges in law toward 
the centered Gaussian distribution $\nu$ of variance $c^2(b+a)/(2a^2)$.
\end{rem}
\par
We will need another basic ingredient,  valid in any dimension, about the functional $J$
defined in (\ref{J}).
\begin{lem}\label{muwimu}
Let $\mu$ and $\wi\mu$ be two Gaussian distributions in $\RR^d$, $d\geq 1$, of mean 0 and respective variance matrices 
$\Sigma$ and $\wi\Sigma$, assumed to be positive definite.
If $\wi\Sigma^{-1}-\Sigma^{-1}/2$ is  positive definite, we have
\bq
J(\wi\mu,\mu)&=& \sqrt{\frac{1}{\sqrt{\det(\Id-S^2)}}-1}\eq
where $S\df \Sigma^{-1}\wi\Sigma-\Id$ and $J(\wi\mu,\mu)=+\iy$ otherwise.
\end{lem}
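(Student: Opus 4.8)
The plan is to reduce $J(\wi\mu,\mu)^2$ to a single Gaussian integral and evaluate it by the standard formula. First I would expand
$\int\big(\tfrac{d\wi\mu}{d\mu}-1\big)^2\,d\mu=\int\big(\tfrac{d\wi\mu}{d\mu}\big)^2\,d\mu-1$, using $\int\tfrac{d\wi\mu}{d\mu}\,d\mu=\int d\wi\mu=1$. Writing $g,\wi g$ for the Lebesgue densities of $\mu,\wi\mu$, this rewrites $J(\wi\mu,\mu)^2$ as $\int_{\RR^d}\wi g^2/g\,dx-1$. Plugging in the explicit centered Gaussian densities, the integrand equals $(2\pi)^{-d/2}\,(\det\Sigma)^{1/2}(\det\wi\Sigma)^{-1}\exp\big(-\tfrac12\,x^*Qx\big)$, where $Q\df 2\wi\Sigma^{-1}-\Sigma^{-1}=2\big(\wi\Sigma^{-1}-\Sigma^{-1}/2\big)$.

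The convergence dichotomy is then immediate: if $Q$ is not positive definite, a null or negative eigendirection makes the integral diverge, so $J(\wi\mu,\mu)=+\iy$, which is exactly the stated exceptional case (note $Q>0$ is equivalent to $\wi\Sigma^{-1}-\Sigma^{-1}/2>0$). When $Q>0$, I would apply $\int_{\RR^d}\exp(-\tfrac12 x^*Qx)\,dx=(2\pi)^{d/2}(\det Q)^{-1/2}$ together with the normalising constants, obtaining $\int \wi g^2/g\,dx=(\det\Sigma)^{1/2}(\det\wi\Sigma)^{-1}(\det Q)^{-1/2}$.

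It remains to simplify this in terms of $S=\Sigma^{-1}\wi\Sigma-\Id$. Writing $\wi\Sigma=\Sigma(\Id+S)$, one has $\Id-S=\Sigma^{-1}(2\Sigma-\wi\Sigma)$, and the factorisation $Q=\wi\Sigma^{-1}(2\Sigma-\wi\Sigma)\Sigma^{-1}=(\Id+S)^{-1}(\Id-S)\Sigma^{-1}$ gives $\det Q=\det(\Id-S)\,[\det(\Id+S)\,\det\Sigma]^{-1}$. Substituting, the powers of $\det\Sigma$ and of $\det(\Id+S)$ cancel and I am left with $\int\wi g^2/g\,dx=[\det(\Id+S)\det(\Id-S)]^{-1/2}=[\det(\Id-S^2)]^{-1/2}$, using $(\Id+S)(\Id-S)=\Id-S^2$. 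Subtracting $1$ and taking the square root yields the claimed identity. Along the way I would record that $\det(\Id+S)=\det\wi\Sigma/\det\Sigma>0$ and $\det(\Id-S)=\det(2\Sigma-\wi\Sigma)/\det\Sigma=\det\wi\Sigma\,\det Q/\det\Sigma>0$, so the intermediate square roots are legitimate.

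There is no genuine obstacle here: the argument is bookkeeping with Gaussian integrals and determinant identities. The only points deserving a little care are the precise equivalence of the positivity conditions on $Q$ and on $\wi\Sigma^{-1}-\Sigma^{-1}/2$, and the sign/branch issue for $\det(\Id-S^2)$, both handled as above.
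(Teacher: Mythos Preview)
Your proof is correct and follows essentially the same route as the paper: expand $J^2$ as $\int(d\wi\mu/d\mu)^2\,d\mu-1$, recognise the resulting Gaussian integral with quadratic form $2\wi\Sigma^{-1}-\Sigma^{-1}$, and simplify the determinant ratio via $\wi\Sigma=\Sigma(\Id+S)$ to reach $[\det(\Id-S^2)]^{-1/2}$. Your explicit check that $\det(\Id\pm S)>0$ under the positivity hypothesis is a small refinement the paper leaves implicit.
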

\proof
From the above assumptions, we have
\bq
\fo x\in\RR^d,\qquad \frac{d\wi\mu}{d\mu}(x)&=&\sqrt{\frac{\det (\Sigma)}{\det (\wi\Sigma)}}\exp\lt(-x^*\frac{(\wi\Sigma^{-1}-\Sigma^{-1})}{2}x\rt)\eq
Thus the function $ \frac{d\wi\mu}{d\mu}$ belongs to $\LL^2(\mu)$ (property itself equivalent to the finiteness of $J(\wi\mu,\mu)$), if and only if the symmetric matrix
$\wi\Sigma^{-1}-\Sigma^{-1}+\Sigma^{-1}/2$ is positive definite. In this case, we have
\bq
\int \lt( \frac{d\wi\mu}{d\mu}\rt)^2\,d\mu&=&\frac{\det (\Sigma)}{\det (\wi\Sigma)}\sqrt{\frac{\det((2\wi\Sigma^{-1}-\Sigma^{-1})^{-1})}{\det(\Sigma)}}\\
&=&\frac{\sqrt{\det (\Sigma)}}{\det (\wi\Sigma)}\sqrt{\det(\wi\Sigma)\det((2\Id-\Sigma^{-1}\wi\Sigma)^{-1})}\\
&=&\sqrt{\frac{\det (\Sigma)}{\det (\wi\Sigma)}}\sqrt{\det((\Id-S)^{-1})}\\
&=&\frac{1}{\sqrt{\det(\Id+S)\det(\Id-S)}}
\eq
where we used that $\wi\Sigma=\Sigma( \Id+S)$. It remains to note that 
\bq
J^2(\wi\mu,\mu)\ =\ \int \lt[ \lt( \frac{d\wi\mu}{d\mu}\rt)^2-2 \frac{d\wi\mu}{d\mu}+1 \rt]\,d\mu\ =\ 
 \int \lt( \frac{d\wi\mu}{d\mu}\rt)^2\,d\mu-1
\eq\wwtbp
\par Still in the case $4b>a$, we can now proceed to the
\prooff{Proof of Proposition \ref{Relambda}}
In view of Lemma \ref{angles}, we want to apply Lemma \ref{muwimu}
with $\Sigma=R_0$ and $\wi\Sigma= R_0-\exp(-at)R_t$, for $t\geq 0$.
This amounts to take $S\df -\exp(-at)R_0^{-1}R_t$, matrix converging to zero exponentially fast as $t$ goes to $+\iy$.
It follows that for $t$ large enough, $\wi\Sigma^{-1}-\Sigma^{-1}/2$ is  positive definite
and we get 
\bq
J(\mu_t,\mu)&=& \sqrt{\frac{1}{\sqrt{\det(\Id-S^2)}}-1}\eq
Taking into account that the matrices $R_t$ are bounded uniformly over $t\in\RR_+$, an expansion for large $t$ gives
\bq
\frac{1}{\sqrt{\det(\Id-S^2)}}&=&
\frac{1}{\sqrt{1-\trace(S^2)+\cO(\lVe S^2\rVe_{\mathrm{HS}}^2)}}
\\&=&
1+\frac12\trace(R_0^{-1}R_tR_0^{-1}R_t)\exp(-2at)+\cO(\exp(-4at))\eq
(where $\lVe\cdot \rVe_{\mathrm{HS}}$ stands for the Hilbert-Schmidt norm, i.e.\  the square root of the sum of the squares of the
entries of the matrix).
We will be able to conclude to
\bqn{a2}
\lim_{t\ri+\iy} \frac{1}{t}\ln(J(\mu_t,\mu))&=&-a\eqn
if we can show that
\bqn{liminf0}
\liminf_{t\ri+\iy} \trace(R_0^{-1}R_tR_0^{-1}R_t)&>&0\eqn
(since it is clear that $\limsup_{t\ri+\iy} \trace(R_0^{-1}R_tR_0^{-1}R_t)<+\iy$).
Taking advantage of the fact that $R_0$ is a symmetric and positive definite matrix, we consider for $t\geq 0$,
$\wit R_t\df R_0^{-1/2}R_tR_0^{-1/2}$, which is also a symmetric matrix.
Since $\trace(R_0^{-1}R_tR_0^{-1}R_t)=\trace(\wit R_t^2)=\lVe\wit R_t\rVe_{\mathrm{HS}}^2$,
this quantity  is nonnegative and can only vanish if $\wit R_t$,
or equivalently $R_t$, is the null matrix. This never happens, because the first entry of $R_t$, namely
$ (cb/a)^2
(2+\sqrt{{a}/{b}}\cos(2\beta- \alpha-2\omega t))/(4b-a)$, is positive.
The continuity and the periodicity of the mapping $\RR_+\ni t\mapsto R_t$ enables to check the validity  of (\ref{liminf0}) and next of (\ref{a2}).\par\sm
The corresponding result for the first marginal  $\nu_t$ (the law of $X_t$) is obtained in the same way.
Indeed, from Lemma \ref{angles}, for any $t\geq 0$, $\nu_t$ is the real Gaussian law of mean 0 and variance
$r_0-\exp(-at)r_t$, where
\bq
\fo t\geq 0, \qquad r_t&\df& 
\frac{(cb)^2}{4a^2b-a^3}
\lt(2+\sqrt{\frac{a}{b}}\cos(2\beta- \alpha-2\omega t)\rt)\eq
with the angles $\alpha \in (\pi/2,3\pi/2)$ and $\beta\in [0,2\pi)$ described in the proof of Lemma \ref{angles}.
In particular $\nu\df\lim_{t\ri+\iy}\nu_t$ is  the real Gaussian law of mean 0 and variance
$r_0$. Lemma \ref{muwimu} applied with $d=1$ leads at once to 
\bq
\lim_{t\ri+\iy} \frac{1}{t}\ln(J(\nu_t,\nu))&=&-a\eq
\wwtbp
\par
The remaining situations $a=4b$ and $a>4b$ can be treated in the same way. In view of the previous arguments,
it is sufficient to check that it is possible to write
\bq \fo t\geq 0,\qquad \Sigma_t&=&\Sigma-\exp(-2 lt)R_t\eq
where $l$ is defined in (\ref{l}) and where the family $(R_t)_{t\geq 0}$
is such that \bq
\lim_{t\ri +\iy} \frac{1}{t}\ln(\lVe R_t\rVe
)&=&0\eq
 for any chosen norm $\lVe\cdot \rVe$ on the space of $2\times 2$ real matrices, due to their mutual equivalence.
 The obtention of the family $(R_t)_{t\geq 0}$ also relies on the spectral decomposition of $A$, with $R_t$
 converging for large times $t$ if $a>4b$ and exploding like $t^2$ if $a=4b$.\par\me
\subsection{Simplifications with the view to Theorem \ref{T1}}\label{sitvoT}
Let us begin this subsection by emphasizing two important properties of  Theorem \ref{T1}:\\
\noindent $\bullet$ The result does not depend on the variance coefficient $c$.\\
\noindent $\bullet$ The exponent is proportional to $\omega=\sqrt{ab-a^2/4}$ which denotes the mean angular speed of the deterministic
system $\dot{z}=Az$.\\
These properties can be understood through some linear and scaling transformations of the process $(Z_t)_{t\ge0}$. 
More precisely, these transformations will be used in the sequel to reduce the problem to the study of a process with 
mean constant angular speed and a  normalized diffusion component.
\smallskip


We choose to first give the idea in  a general case and then, apply it to our model.\\

Let $A\in{GL}_{2}(\ER)$ with complex eigenvalues given by $\lambda_{\pm}={-\rho}{\pm}i{\omega}$ where $\rho\in\mathbb{\ER}$
and $\omega\in\ER_+^*$. Let us consider the two-dimensional Gaussian differential system given by
\begin{equation}\label{eq:gene-syst}
d \zeta_t=A  \zeta_t dt+\Sigma dB_t
\end{equation}
where $\Sigma\in\mathbb{M}_2(\ER)$ and $(B_t)_{t\ge0}$ is a standard two dimensional Brownian motion.
For such a process, the precise transformation is given in Proposition \ref{genetransf}. This proposition is based on the following lemma. 

\noindent 
\begin{lem} Let $A\in{GL}_{2}(\ER)$ with complex eigenvalues given by $\lambda_{\pm}={-\rho}{\pm}i{\omega}$ where $\rho\in\mathbb{\ER}$
and $\omega\in\ER_+^*$. There exists $P\in{GL}_{2}(\ER)$ such that 
$$A=P(-\rho I_2+\omega J_2)P^{-1}$$
where 
\begin{equation}
I_2:=\begin{pmatrix}1&0\\
0&1\end{pmatrix} \qquad \text{and} \qquad 
J_2:=\begin{pmatrix}0&-1\\
1&0\end{pmatrix}.
\end{equation}
Furthermore, for every $v\in\ER^2\backslash\{0\}$, $P:=P_v$ given by $P_v=(v,\frac{D+\rho I_2}{\omega} v)$ is an admissible choice. 
\end{lem}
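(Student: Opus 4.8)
The plan is to verify directly that the matrix $B:=-\rho I_2+\omega J_2$ is conjugate to $A$ through the explicit matrix $P_v:=(v,w_v)$, where $w_v:=\omega^{-1}(A+\rho I_2)v$ (so that $P_v$ is precisely the matrix written in the statement), and then to check that $P_v$ is invertible whenever $v\neq0$. So first I would write $B=\bigl(\begin{smallmatrix}-\rho&-\omega\\ \omega&-\rho\end{smallmatrix}\bigr)$, compute that $AP_v$ has columns $Av$ and $Aw_v$, while $P_vB$ has columns $-\rho v+\omega w_v$ and $-\omega v-\rho w_v$, and then compare columns.

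The first column identity $Av=-\rho v+\omega w_v$ holds by the very definition of $w_v$. The second column identity $Aw_v=-\omega v-\rho w_v$ is equivalent to $(A+\rho I_2)w_v=-\omega v$, i.e. to $(A+\rho I_2)^2v=-\omega^2 v$. The key observation is that this holds for every $v\in\ER^2$, and not just for a particular one, because $(A+\rho I_2)^2=-\omega^2 I_2$ identically: the characteristic polynomial of $A$ is $(X-\lambda_+)(X-\lambda_-)=(X+\rho)^2+\omega^2$, so the Cayley--Hamilton theorem gives $(A+\rho I_2)^2+\omega^2 I_2=0$. Hence $AP_v=P_vB$ for all $v$.

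It remains to argue that $P_v\in GL_2(\ER)$ when $v\neq0$. If $v$ and $w_v$ were linearly dependent, then $w_v=\mu v$ for some real $\mu$, whence $(A+\rho I_2)v=\omega\mu v$ with $\omega\mu\in\ER$; but $A+\rho I_2$ has eigenvalues $\pm i\omega\notin\ER$ (here the hypothesis $\omega>0$ is used), so it has no real eigenvalue --- a contradiction. Therefore $\det P_v\neq0$, and $A=P_v(-\rho I_2+\omega J_2)P_v^{-1}$, which in particular proves the existence of such a $P$. I do not expect a genuine obstacle in this argument; the only point that requires a little care is the invertibility of $P_v$, which is exactly where the non-reality of the eigenvalues (equivalently $\omega\neq0$) enters.
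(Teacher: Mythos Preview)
Your proof is correct and follows essentially the same route as the paper. Both arguments reduce to the identity $(A+\rho I_2)^2=-\omega^2 I_2$ (the paper phrases this as ``the eigenvalues of $B=(A+\rho I_2)/\omega$ are $\pm i$, so $B^2=-I_2$''), and then read off the conjugation from the columns of $P_v=(v,Bv)$. Your write-up is actually more complete: the paper merely asserts that $P_v$ is ``clearly invertible'', whereas you supply the argument via the non-reality of the eigenvalues of $A+\rho I_2$.
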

\begin{proof}
Set $B=\frac{{D-\rho I_2}}{\omega}$. The eigenvalues of $B$ are $\pm i$ so that $B^2=-I_2$. For any $v\in\ER^2$, set $P_v=(v,Bv)$.
The matrix $P_v$ is clearly invertible and using that $B^2v=-v$, one obtains that $B=P_v J_2 P_{v}^{-1}$.
The result follows. \wwtbp
\end{proof}

\begin{pro}\label{genetransf} Let $(\zeta_t)_{t\ge0}$ be a solution to \eqref{eq:gene-syst} where $A\in{GL}_{2}(\ER)$ with complex eigenvalues given by $\lambda_{\pm}=-{\rho}{\pm}i{\omega}$ (with $\rho\in\ER$ and $\omega\in\ER_+^*$).
For any $\alpha\in\ER^*$ and $v\in\ER^2\backslash\{0\}$, set $\hat{\zeta}_t={\sqrt{\omega}\alpha} P_v^{-1} \zeta_{\frac{t}{\omega}}$. The process $(\hat{\zeta}_t)_{t\ge0}$ is a solution to 
\begin{equation}
d\hat{\zeta}_t=-\frac{\rho}{\omega} \hat{\zeta}_t+ J_2 \hat{\zeta}_t+\alpha P_{v}^{-1}\Sigma d W_t
\end{equation}
where $(W_t)$ is a standard two-dimensional Brownian motion.\\
\end{pro}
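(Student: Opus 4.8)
The plan is to realize the map $\zeta\mapsto\hat\zeta$ as the composition of two elementary operations performed one after the other: first the deterministic time change $t\mapsto t/\omega$, and then the constant linear substitution $\eta\mapsto\sqrt\omega\,\alpha\,P_v^{-1}\eta$. Since \eqref{eq:gene-syst} is a linear SDE with a \emph{constant} diffusion matrix, no Itô formula is needed beyond the scaling property of Brownian motion, and the identity $A=P_v(-\rho I_2+\omega J_2)P_v^{-1}$ supplied by the preceding lemma.

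\textbf{Step 1 (time change).} Set $\tilde\zeta_t\df\zeta_{t/\omega}$. Writing \eqref{eq:gene-syst} in integrated form, $\zeta_u=\zeta_0+\int_0^uA\zeta_r\,dr+\Sigma B_u$ (the diffusion matrix being constant), then substituting $u=t/\omega$ and $r=s/\omega$ in the drift integral, one gets
\[
\tilde\zeta_t\ =\ \zeta_0+\frac1\omega\int_0^tA\tilde\zeta_s\,ds+\Sigma B_{t/\omega}.
\]
By the scaling invariance of Brownian motion, $\tilde B_t\df\sqrt\omega\,B_{t/\omega}$ is again a standard two-dimensional Brownian motion, now with respect to the time-changed filtration $(\cF_{t/\omega})_{t\ge0}$ to which $\tilde\zeta$ is adapted; hence $\Sigma B_{t/\omega}=\omega^{-1/2}\Sigma\tilde B_t$ and
\[
d\tilde\zeta_t\ =\ \frac1\omega A\tilde\zeta_t\,dt+\frac1{\sqrt\omega}\Sigma\,d\tilde B_t.
\]

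\textbf{Step 2 (linear change of variables).} Applying the constant invertible linear map $\hat\zeta_t\df\sqrt\omega\,\alpha\,P_v^{-1}\tilde\zeta_t$ amounts, by linearity, to multiplying the last SDE on the left by $\sqrt\omega\,\alpha\,P_v^{-1}$, giving
\[
d\hat\zeta_t\ =\ \frac{\alpha}{\sqrt\omega}\,P_v^{-1}A\,\tilde\zeta_t\,dt+\alpha\,P_v^{-1}\Sigma\,d\tilde B_t.
\]
Now invoke the preceding lemma: $P_v^{-1}A=(-\rho I_2+\omega J_2)P_v^{-1}$; substituting this together with $P_v^{-1}\tilde\zeta_t=(\sqrt\omega\,\alpha)^{-1}\hat\zeta_t$, the drift collapses to $\frac1\omega(-\rho I_2+\omega J_2)\hat\zeta_t=-\frac\rho\omega\hat\zeta_t+J_2\hat\zeta_t$, and with $W\df\tilde B$ this is exactly the announced equation (the $dt$ being tacit in the displayed statement).

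There is essentially no obstacle here; the argument is bookkeeping. The only points deserving a little care are tracking the powers of $\omega$ and $\alpha$ so that they cancel to leave precisely $-\rho/\omega$ and $J_2$ in the drift and $\alpha P_v^{-1}\Sigma$ in front of the noise, and the (standard) verification — via Lévy's characterization, since $\langle\tilde B\rangle_t=\omega\cdot(t/\omega)I_2=tI_2$ — that $(\sqrt\omega\,B_{t/\omega})_{t\ge0}$ is a Brownian motion adapted to the time-changed filtration, so that $\hat\zeta$ is a genuine solution of an SDE driven by it.
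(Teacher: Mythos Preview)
Your proof is correct and follows essentially the same approach as the paper's: both use the decomposition $A=P_v(-\rho I_2+\omega J_2)P_v^{-1}$ from the preceding lemma together with the Brownian scaling $W_t=\sqrt{\omega}\,B_{t/\omega}$. The only cosmetic difference is the order of operations---the paper applies the linear map $P_v^{-1}$ first and then the time-and-scale change, whereas you do the time change first and then the linear map---but this is immaterial.
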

\begin{proof}
First, set  $\tilde{\zeta}_t^v=P_v^{-1} \zeta_t$. Owing to the preceding lemma, $(\tilde{\zeta}^v_t)_{t\ge0}$ is a solution to 
$$
d\tilde{\zeta}^v_t= -\rho \tilde{\zeta}^v_t +\omega J_2 \tilde{\zeta}_t^v+P_{v}^{-1}\Sigma dB_t.$$
For any $\alpha\in\ER^*$, set $\hat{\zeta}_t={\sqrt{\omega}\alpha}\tilde{\zeta}_{\frac{t}{\omega}}^v$. Setting $W_t=\sqrt{\omega} B_{\frac{t}{\omega}}$ (which is a Brownian motion), one checks that 
$$d\hat{\zeta}_t=-\frac{\rho}{\omega} \hat{\zeta}_t+ J_2 \hat{\zeta}_t+\alpha P_{v}^{-1}\Sigma d W_t.$$\wwtbp
\end{proof}
\smallskip
We now apply this proposition to our problem.

\begin{cor}\label{cor:trmod} Let $(Z_t)_{t\ge0}$ be a solution to \eqref{eds2}  and assume that {$a<4b$}. Set $\omega=\sqrt{ab-\frac{a^2}{4}}$. Let $v\in \ER^2\setminus \{0\}$ and set $P_v=(v,B v)$ with $B=\frac{1}{\omega}(A+\frac{a}{2} I_2)$.  Then for any $\alpha\in\RR\setminus\{0\}$, the process $(\hat{Z}_t)_{t\ge0}$ defined by $\hat{Z}_t={\sqrt{\omega}\alpha} P_v^{-1} Z_{\frac{t}{\omega}}$ is a  solution to 
\begin{equation}\label{defzzz}
d\hat{Z}_t=-\frac{a}{2\omega} \hat{Z}_t+ J_2 \hat{Z}_t+\alpha c P_{v}^{-1}\Sigma d W_t\quad \textnormal{with} 
\quad
\Sigma=\begin{pmatrix} 1 & 0\\
0& 0\end{pmatrix},
\end{equation}
where $W$ is a standard two-dimensional Brownian motion.
In particular, if 
 $v=(\frac{1}{b^2}(\frac{a}{2}-b),1)^*$ and $\alpha=\frac{\sqrt{2}\omega}{c b^2}$, then
$(\hat{Z}_t):=(U_t,V_t)$ is a solution to 
\begin{equation}\label{eq:hypocoercif}
\begin{cases}
d U_t=-\frac{a}{2\omega} U_t-V_t dt\\
dV_t= -\frac{a}{2\omega} V_t+U_t+\sqrt{2}dW_t.
\end{cases}
\end{equation}
where $W$ is now a standard one-dimensional Brownian motion.
\end{cor}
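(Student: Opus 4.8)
The plan is to read the first assertion off Proposition \ref{genetransf}, and then to obtain the second one by an explicit computation of $P_v$ and $P_v^{-1}$ for the prescribed vector $v$.

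First I would check that \eqref{eds2} falls into the framework of \eqref{eq:gene-syst}. The matrix $A$ in \eqref{AC} has characteristic polynomial $X^2+aX+ab$, hence $\det A=ab>0$ so $A\in GL_2(\RR)$; and for $0<a<4b$ the discriminant $a^2-4ab$ is negative, so the eigenvalues are $\lambda_\pm=-\frac a2\pm i\omega$ with $\omega=\sqrt{ab-a^2/4}>0$ as in \eqref{w}. Thus $\rho=a/2$ in the notation of Proposition \ref{genetransf}. Moreover $C=(c,0)^*$ from \eqref{eds2} equals $c\Sigma$ with $\Sigma=\begin{pmatrix}1&0\\0&0\end{pmatrix}$, once the scalar Brownian motion $B$ is viewed as the first coordinate of a two-dimensional one (the second coordinate is irrelevant since the second column of $\Sigma$ vanishes). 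Finally, with $\rho=a/2$, the matrix $B=\frac1\omega(A+\frac a2 I_2)$ of the corollary is precisely the matrix $\frac1\omega(A+\rho I_2)$ which the lemma preceding Proposition \ref{genetransf} shows makes $P_v=(v,Bv)$ an admissible change of basis. Applying Proposition \ref{genetransf} with these data, and using $\rho/\omega=a/(2\omega)$, produces \eqref{defzzz} immediately.

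For the second statement, I would take $v=(\frac1{b^2}(\frac a2-b),1)^*$ and compute $Bv$. Using $b^2-(b-\frac a2)^2=ab-\frac{a^2}4=\omega^2$ one gets $(A+\frac a2 I_2)v=(\omega^2/b^2,0)^*$, hence $Bv=(\omega/b^2,0)^*$; note that $v$ is chosen exactly so that the second component of $(A+\frac a2 I_2)v$, namely $-b^2v_1+(\frac a2-b)v_2$, is zero. Therefore $P_v=\begin{pmatrix}(\frac a2-b)/b^2 & \omega/b^2\\ 1 & 0\end{pmatrix}$, with $\det P_v=-\omega/b^2$ and $P_v^{-1}=\begin{pmatrix}0 & 1\\ b^2/\omega & (b-\frac a2)/\omega\end{pmatrix}$. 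Consequently $P_v^{-1}\Sigma$ has a single nonzero entry, equal to $b^2/\omega$ in position $(2,1)$, so $\alpha c\,P_v^{-1}\Sigma$ with $\alpha=\frac{\sqrt2\,\omega}{cb^2}$ has single nonzero entry $\sqrt2$ in position $(2,1)$. Hence the driving noise in \eqref{defzzz} reduces to $\sqrt2\,dW_t$ acting on the second coordinate alone, so $W$ may be taken one-dimensional; writing $\hat Z_t=(U_t,V_t)^*$ and expanding $J_2\hat Z_t=(-V_t,U_t)^*$ in \eqref{defzzz} then gives \eqref{eq:hypocoercif}.

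I do not expect a genuine obstacle: once Proposition \ref{genetransf} is available this is pure linear algebra together with Brownian scaling. The points that require a little care are the bookkeeping of the driving Brownian motion — scalar in \eqref{eds2}, artificially two-dimensional to match \eqref{eq:gene-syst}, and scalar again in \eqref{eq:hypocoercif} — and the observation that the prescribed $v$ is exactly the vector annihilating the second coordinate of $Bv$, which is what sends all the noise into a single coordinate and yields the clean coupled form \eqref{eq:hypocoercif}.
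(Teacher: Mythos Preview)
Your proof is correct and follows essentially the same route as the paper: apply Proposition \ref{genetransf} with $\rho=a/2$ for the first assertion, then compute $P_v$ and $P_v^{-1}$ explicitly for the given $v$ and verify that $\alpha c\,P_v^{-1}\Sigma$ has the single nonzero entry $\sqrt2$ in position $(2,1)$. Your direct computation of $Bv$ via the identity $b^2-(b-\tfrac a2)^2=\omega^2$ is in fact a bit cleaner than the paper's, and your $(2,2)$ entry $(b-\tfrac a2)/\omega$ of $P_v^{-1}$ is consistent with the remark following the corollary (the paper's proof carries a harmless sign typo there).
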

\begin{rem} In the second part, of the corollary, one remarks that one chooses $v$ in order that 
the transformed process has only a (normalized) diffusive component on the second coordinate.
%

Furthermore,  if $Z$ has $(x,y)\in\RR^2$ for initial deterministic condition, then $\hat{Z}$ starts from the point $\sqrt{\omega}c^{-1}b^{-2}(\omega y,
b^2x +(b-a/2)y)$. The images of $(1,0)^*$ and $(0,1)^*$ by $P_v^{-1} $ are particularly important for our purposes, since they enable to see
that the half-plane $\{(x,y)\in\RR^2\st x>0\}$ for $Z$ is transformed into the half-plane $\{(u,v)\in\RR^2\st v>
\frac{2b-a}{2\omega}u\}$ for $\hat{Z}$.
Note that in the setting  $a << b$, the latter half-plane  is quite similar to the former one, since $\omega \sim \sqrt{ab} << b$.


\end{rem}
\begin{proof} We recall that $(Z_t)=(X_t,Y_t)$ is a solution
to 
$$dZ_t=A Z_t+  c\Sigma  dB_t\quad\textnormal{where }\quad A=\begin{pmatrix} b-a & 1\\
-b^2& -b\end{pmatrix}\quad\textnormal{and }\quad 
$$
and $\Sigma$ is defined in \eqref{defzzz}.
When $a<4b$, the eigenvalues of  $A$ are given by
$$\lambda_{\pm}=-\frac{a}{2}\pm i\omega.$$
For any $v\in\ER^2$, set $P_v:=(v,Bv)$ with 
$$B=\frac{1}{\omega}{(A+\frac{a}{2} I_2)}.$$
Applying the previous proposition, we deduce that  for any $\alpha\in\ER^*$, $(\hat{Z}_t)_{t\ge0}:= (\alpha\sqrt{\omega} P_v^{-1} Z_{\frac{t}{\omega}})_{t\ge0}$
is a solution to 
$$d\hat{Z}_t=-\frac{a}{2\omega} \hat{Z}_t+ J_2 \hat{Z}_t+\alpha c P_{v}^{-1}\Sigma d\hat{W}_t$$   
where $(\hat{W}_t)$ is a standard two-dimensional Brownian motion. 
\smallskip
For the second part, it remains to choose $v$ and $\alpha$ so that 
\begin{equation}\label{eq:condpv}
\alpha c P_{v}^{-1}\Sigma=\begin{pmatrix} 0&0\\
\sqrt{2}&0\end{pmatrix}.
\end{equation}
If $v=(u_1,u_2)^*$, then 
\begin{equation}\label{matrixpv}
P_v=\begin{pmatrix} u_1 &\frac{1}{\omega} \left((b-\frac{a}{2})u_1+u_2\right)\\
&\\
u_2& -\frac{1}{\omega} \left(b^2 u_1+(b-\frac{a}{2})u_2\right)\end{pmatrix}
\end{equation}
One deduces  that condition \eqref{eq:condpv} (or more precisely the fact that $(P_v^{-1}\Sigma)_{1,1}=0$)  implies  $b^2 u_1+(b-\frac{a}{2}) u_2=0$. Setting $v=(\frac{1}{b^2}(\frac{a}{2}-b),1)^*$, we have 
$$P_v=\begin{pmatrix} \frac{1}{b^2}\left(\frac{a}{2}-b\right) &\frac{\omega}{b^2}\\
 1& 0\end{pmatrix}\quad\textnormal{and}\quad P_v^{-1}=\begin{pmatrix} 0 &1\\
 \frac{b^2}{\omega}& \frac{1}{\omega}\left(\frac{a}{2}-b\right)\end{pmatrix}.
$$
Condition  \eqref{eq:condpv} is then satisfied when $\alpha=\frac{\sqrt{2}\omega}{c b^2}$.  \wwtbp

%
%
\end{proof}

\section{Dirichlet eigenvalues estimates}\label{Dee}
This section is devoted to the proof of Theorem \ref{T1}. More precisely, the aim is to obtain successively upper and lower bounds for
$\PE_{(x_0,y_0)}(\tau>t)$ where $\tau:=\inf\{t\ge0,X_t\le 0\}$. In fact, some of the results will be  stated for exit times of more general domains. For a given (open) domain ${\cal S}$ of $\ER^2$, we will thus denote by 
$$\tau_{\cal S}:=\inf\{t\ge0, (X_t,Y_t)\in {\cal S}^c\}.$$
\subsection{Upper-bound for the exit time of an angular sector ${\cal S}$}
\subsubsection{The case ${\cal S}=\{(x,y),x>0\}$}
In this part, we focus on the particular stopping time $\tau$ of Theorem \ref{T1} which corresponds to the exit time of $D=\{(x,y),x>0\}$. We have the following result:
\begin{pro}\label{upper1erresult} Let $(Z_t)_{t\ge0}$ be a solution to \eqref{eds2} with $a<4b$. Then, for every $(x_0,y_0)\in\ER^2$ such that $x_0>0$,
$$\PE_{(x_0,y_0)}(\tau>t)\le 2\exp\lt(-\frac{\log 2}{\pi}{\omega}t\rt).$$
\end{pro}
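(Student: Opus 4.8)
The plan is to prove a single ``half-turn'' estimate and then iterate it using the Markov property. Put $t_0:=\pi/\omega$. Since $(Z_t)_{t\ge0}$ solves the linear system (\ref{eds2}) started from the deterministic point $(x_0,y_0)$, the vector $Z_t$ is Gaussian with mean $e^{At}(x_0,y_0)^*$. The key observation I would isolate is that
\[
e^{At_0}\ =\ -\,e^{-a\pi/(2\omega)}\,\Id ,
\]
i.e.\ the time-$t_0$ flow of $\dot z=Az$ is a contraction composed with the point reflection $z\mapsto-z$. Indeed, for $a<4b$ the matrix $A$ has the two distinct eigenvalues $\lambda_\pm=-a/2\pm i\omega$, so $e^{At_0}$ is diagonalisable with both eigenvalues equal to $e^{-a\pi/(2\omega)}e^{\pm i\pi}=-e^{-a\pi/(2\omega)}$, and a diagonalisable matrix with a single eigenvalue is that scalar times $\Id$. (Equivalently, after the reduction of Corollary~\ref{cor:trmod} the deterministic drift is $-\tfrac{a}{2\omega}\Id+J_2$ and $e^{\pi J_2}=-\Id$.)

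Granting this, the half-turn estimate is one line: for any starting point $(x,y)$ with $x>0$,
\[
\EE_{(x,y)}[X_{t_0}]\ =\ \bigl(e^{At_0}(x,y)^*\bigr)_1\ =\ -\,e^{-a\pi/(2\omega)}\,x\ <\ 0 ,
\]
while $X_{t_0}$, being a linear functional of the Gaussian vector $Z_{t_0}$, is a real Gaussian variable; a real Gaussian variable with negative mean is positive with probability at most $1/2$. Hence $\PE_{(x,y)}[X_{t_0}>0]\le\tfrac12$, and since $\{\tau>t_0\}\subseteq\{X_{t_0}>0\}$ we obtain $\PE_{(x,y)}[\tau>t_0]\le\tfrac12$, uniformly over all such $(x,y)$.

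Finally I would iterate. On the event $\{\tau>n t_0\}$ one has $Z_{nt_0}=(X_{nt_0},Y_{nt_0})$ with $X_{nt_0}>0$, so the Markov property of $Z$ combined with the previous step gives $\PE_{(x_0,y_0)}[\tau>(n+1)t_0]\le\tfrac12\,\PE_{(x_0,y_0)}[\tau>n t_0]$, whence $\PE_{(x_0,y_0)}[\tau>n t_0]\le 2^{-n}$ for every $n\in\NN$. For arbitrary $t\ge0$, taking $n=\lfloor t/t_0\rfloor\ge t/t_0-1$ yields
\[
\PE_{(x_0,y_0)}[\tau>t]\ \le\ \PE_{(x_0,y_0)}[\tau>n t_0]\ \le\ 2^{-n}\ \le\ 2^{\,1-t/t_0}\ =\ 2\exp\!\Bigl(-\tfrac{\log 2}{\pi}\,\omega t\Bigr),
\]
which is the asserted bound.

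The only substantial step is spotting the right time scale $t_0=\pi/\omega$, at which $e^{At_0}$ collapses to a negative multiple of the identity; once this is in hand the argument is just a Gaussian sign estimate followed by a routine Markov iteration, so I do not anticipate a genuine obstacle. The minor points to verify are that $X_{t_0}$ is nondegenerate enough for the sign estimate (strict negativity of its mean already suffices, so positivity of $(\Sigma_{t_0})_{11}$ is not even needed) and the elementary arithmetic in passing from integer multiples of $t_0$ to general $t$.
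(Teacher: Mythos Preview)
Your proof is correct and follows essentially the same approach as the paper: establish the uniform half-turn estimate $\PE_{(x,y)}[\tau>\pi/\omega]\le\tfrac12$ from the negativity of $\EE[X_{\pi/\omega}]$ and Gaussianity, then iterate via the Markov property. The only cosmetic difference is that the paper obtains $\EE[X_{\pi/\omega}]<0$ by writing $x(t)=\EE[X_t]$ as a solution of the second-order ODE $\ddot x+a\dot x+abx=0$ and using a phase argument on the cosine, whereas you compute $e^{A\pi/\omega}=-e^{-a\pi/(2\omega)}\Id$ directly from the eigenvalues; your route is arguably cleaner.
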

\begin{proof} Set $z(t)=(x(t),y(t))=(\ES[X_t],\ES[Y_t])$. The function $(z(t))_{t\ge0}$ being a solution to $\dot{z}=A z$, we deduce in particular that 
$$\forall \, t\ge0, \quad \overset{..}{x}(t)+a\dot{x}+ ab x(t)=0.$$
Since $a<4b$, the roots of the characteristic equation associated with the previous equation are: $\lambda_{\pm}=-\frac{a}{2}\pm \omega$
where $\omega=\sqrt{ab-a^2/4}$. As a consequence, there exists $C>0$ and $\varphi_0\in(-\pi,\pi]$ such that 
$$x(t)=C\cos(\omega t+\varphi_0),\quad t\ge0. $$
Reminding that $x_0>0$, we deduce that $\varphi_0\in(-\frac{\pi}{2},\frac{\pi}{2})$. Thus, at time $T_\omega=\frac{\pi}{\omega}$, 
$$\forall x_0>0,\quad \omega T_\omega+\phi_0\in(\frac{\pi}{2},\frac{3\pi}{2})\;\Longrightarrow\, x(T_\omega)<0.$$
But $x(T_\omega)=\ES[X_{T_\omega}]$ and $X_{T_\omega}$ is Gaussian and has a symmetric distribution. Thus, we deduce from what precedes that 
$$\forall x_0>0,\,y_0\in\ER,\quad \PE_{(x_0,y_0)}(X_{T_\omega}<0)\ge \frac{1}{2}$$
which in turn implies that
\begin{equation}\label{taule12}
\forall x_0>0,\,y_0\in\ER,\quad \PE_{(x_0,y_0)}(\tau\ge T_\omega)\le \frac{1}{2}.
\end{equation}
Thus, we have a upper-bound at time $T_\omega$ which does not depend on the initial value $(x_0,y_0)$. As a consequence, we can  use a Markov argument. More precisely, owing to the Markov property and to \eqref{taule12}, we have for every integer $k\ge 1$:
$$\PE(\tau> kT_\omega |\tau> (k-1)T_\omega)=\frac{\ES[\PE_{(X_{(k-1)T_\omega},Y_{(k-1)T_\omega})}(\tau>T_\omega)1_{\tau> (k-1)T_\omega}]}{\PE(\tau> (k-1)T_\omega)}\le \sup_{x_0>0,y_0\in\ER}\PE_{(x_0,y_0)}(\tau>T_\omega).$$
An iteration of this property yields 
$$\forall n\in\NN,\;\forall (x_0,y_0)\in\ER_+^*\times \ER,\quad\PE_{(x_0,y_0)}(\tau>nT)\le \left(\frac{1}{2}\right)^n.$$
It follows that 
$$\forall t\ge0,\;\forall (x_0,y_0)\in\ER_+^*\times \ER,\quad\PE_{(x_0,y_0)}(\tau>t)\le \left(\frac{1}{2}\right)^{\lfloor \frac{t}{T_\omega}\rfloor}\le 2\exp\lt(-\frac{\log 2}{T_\omega} t\rt).$$
This concludes the proof.
\end{proof}
\subsubsection{Extension to general angular sectors}
We now consider an angular sector ${\cal S}_{\alpha_1,\alpha_2}$ defined as
\begin{equation}\label{st1t2}
{\cal S}_{\alpha_1,\alpha_2}=\{(x,y)\in\ER^2,\,x>0\,,\; \alpha_1 x<{y}<\alpha_2 x\}
\end{equation}
where $\alpha_1,\alpha_2\in\ER$ and $\alpha_1<\alpha_2$. The set ${\cal S}_{\alpha_1,\alpha_2}$ can also be written 
${\cal S}_{\alpha_1,\alpha_2}=\{(r\cos \theta,r\sin\theta), r>0,\theta_1<\theta<\theta_2\}$ with $\theta_1,\theta_2\in[-\pi/2,\pi/2]$. Note that for the sake of simplicity, we only consider angular sectors which are included in $\{(x,y),x>0\}$. The results below can  be extended to any angular sectors  
for which the angular size is lower than $\pi$.
For such domains, we first give a result when the model has a constant (mean) angular speed even if such a result does not apply to  
the solutions of \eqref{eds2} for sake of completeness. This is the purpose of  Lemma \ref{lemma:ass} below. 

Concerning now our initial motivation, we also derive an extension of Proposition \ref{upper1erresult} for any general angular sector, and this result is stated in Proposition \ref{upper2result}.
\begin{lem}\label{lemma:ass}
Let $(Z_t)_{t\ge0}$ be a solution of
$$dZ_t=-\rho Z_t+\omega J_2 Z_t +\Sigma dW_t$$
where $\rho\in\ER$, $\omega\in\ER_+^*$, $\Sigma\in \mathbb{M}_2(\ER)$ and $W$ is a two-dimensional Brownian motion.
Let ${\cal S}_{\alpha_1,\alpha_2}$ be defined by \eqref{st1t2} where $\alpha_1,\alpha_2\in\ER$ and $\alpha_1<\alpha_2$. 
Then, for any $(x_0,y_0)\in {\cal S}_{\alpha_1,\alpha_2}$, 
$$\PE(\tau_{{\cal S}_{\alpha_1,\alpha_2}}\ge t)\le 2\exp\lt(-\frac{\ln(2)}{\theta_2-\theta_1}{\omega}t\rt)$$
with $\theta_1={\rm Arctan}(\alpha_1)$ and $\theta_2={\rm Arctan}(\alpha_2).$
\end{lem}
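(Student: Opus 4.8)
The strategy mirrors the proof of Proposition~\ref{upper1erresult}: exhibit a deterministic time $T$ after which, whatever the starting point in $\mathcal{S}_{\alpha_1,\alpha_2}$, the process has probability at least $1/2$ of having left the sector; then iterate via the Markov property. The key simplification is that for the equation $dZ_t=-\rho Z_t+\omega J_2Z_t+\Sigma dW_t$, the drift is a rotation-dilation: writing $Z_t$ in polar-type coordinates, the mean $z(t)=\EE[Z_t]$ solves $\dot z=(-\rho I_2+\omega J_2)z$, so $z(t)=e^{-\rho t}R_{\omega t}z(0)$, where $R_{\omega t}$ is the rotation of angle $\omega t$. Thus the argument of $z(t)$ is exactly $\arg z(0)+\omega t$ (mod $2\pi$), rotating at constant angular speed $\omega$.

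**Main steps.**
First, I would reduce to the mean trajectory: since $\Sigma dW_t$ contributes a centered Gaussian, $Z_t$ has a symmetric distribution around $z(t)=\EE[Z_t]$ (more precisely $Z_t-z(t)$ is a centered, hence symmetric, Gaussian vector). Second, fix $(x_0,y_0)\in\mathcal{S}_{\alpha_1,\alpha_2}$, so $\arg z(0)\in(\theta_1,\theta_2)$ with $\theta_i=\mathrm{Arctan}(\alpha_i)$. Choose $T=T_\omega:=(\theta_2-\theta_1)/\omega$. Then $\arg z(T)=\arg z(0)+(\theta_2-\theta_1)\ \mathrm{mod}\ 2\pi$, which lies outside the arc $(\theta_1,\theta_2)$; hence $z(T)\notin\overline{\mathcal{S}_{\alpha_1,\alpha_2}}$ (the magnitude $e^{-\rho T}|z(0)|>0$ is irrelevant, only the angle matters, and the sector is a cone). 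Now the complement of the cone $\mathcal{S}_{\alpha_1,\alpha_2}$ contains the open half-plane $H$ bounded by the line through $0$ and $z(T)$, on the side not containing the sector; since $z(T)\in H$ and $H$ is a half-plane through the origin, by symmetry of the Gaussian law of $Z_T$ about $z(T)$... — more carefully: $H=\{w:\langle w,n\rangle>0\}$ for a suitable unit vector $n$ with $\langle z(T),n\rangle>0$, and $\langle Z_T,n\rangle$ is Gaussian with mean $\langle z(T),n\rangle>0$, so $\PE(Z_T\in H)=\PE(\langle Z_T,n\rangle>0)\ge 1/2$. Hence $\PE(Z_T\notin\mathcal{S}_{\alpha_1,\alpha_2})\ge 1/2$, i.e. $\PE(\tau_{\mathcal{S}_{\alpha_1,\alpha_2}}\le T_\omega)\ge 1/2$, uniformly in the starting point. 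Third, apply the Markov property exactly as in Proposition~\ref{upper1erresult}: conditioning at times $kT_\omega$ gives $\PE_{(x_0,y_0)}(\tau_{\mathcal{S}_{\alpha_1,\alpha_2}}>nT_\omega)\le 2^{-n}$, and interpolating in $t$ yields
\[
\PE(\tau_{\mathcal{S}_{\alpha_1,\alpha_2}}>t)\le 2^{-\lfloor t/T_\omega\rfloor}\le 2\exp\Bigl(-\tfrac{\ln 2}{\theta_2-\theta_1}\,\omega t\Bigr).
\]

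**Expected obstacle.**
The only delicate point is the geometric claim that after rotating the \emph{mean} by angle $\theta_2-\theta_1$ one can find a half-plane through the origin that contains $z(T)$ but is disjoint from the sector. This is where the hypothesis $\theta_2-\theta_1<\pi$ (angular size less than $\pi$) is essential: it guarantees that the sector is contained in an open half-plane and that, after rotation by the full angular width, $z(T)$ lands strictly on the complementary side; if the width were $\ge\pi$ the rotated mean could re-enter. One must check this carefully for both the case where $z(0)$ is near $\theta_1$ (the rotated point is near $\theta_2+(\theta_2-\theta_1)$) and near $\theta_2$, and handle the boundary of the half-plane, but since $\arg z(0)$ is strictly interior to $(\theta_1,\theta_2)$ and $e^{-\rho T}>0$, $z(T)$ lands strictly inside the complementary open half-plane, so the estimate $\PE(\langle Z_T,n\rangle>0)\ge 1/2$ is in fact strict and certainly $\ge 1/2$. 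Everything else is a verbatim repetition of the Markov-iteration argument already carried out above.
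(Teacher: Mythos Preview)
Your proposal is correct and follows essentially the same route as the paper: both compute the mean $z(t)=e^{-\rho t}R_{\omega t}z(0)$ to see the constant angular speed, set $T_\omega=(\theta_2-\theta_1)/\omega$, separate $z(T_\omega)$ from the sector by a half-plane through the origin, use the Gaussian law of $Z_{T_\omega}$ to get the uniform bound $\PE_{(x_0,y_0)}(\tau_{\mathcal S_{\alpha_1,\alpha_2}}>T_\omega)\le 1/2$, and then iterate via the Markov property. Your treatment of the half-plane step (a suitable normal $n$ with $\langle z(T),n\rangle>0$ and $\mathcal S_{\alpha_1,\alpha_2}\subset\{\langle\cdot,n\rangle\le 0\}$, yielding probability $\ge 1/2$) is if anything slightly more careful than the paper's wording.
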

\begin{proof}
Let $z(t)=(\ES[X_t],\ES[Y_t])$ and define $(u(t))_{t \geq 0} :=(e^{\rho t} z(t))_{t\ge0}$, $u$ is a solution of
$$\dot{u}=\omega J_2 u.$$
We deduce that
$$e^{\rho t}z(t)= (A\cos(\omega t+\varphi),A\omega\sin(\omega t+\varphi))$$
where $A\ge0$ and $\varphi\in[-\pi,\pi)$. This implies that the angular rate of $(z(t))_{t\ge0}$ is constant and is equal to $\omega$.  Thus, it follows that for every starting point $(x,y)\in {\cal S}_{\alpha_1,\alpha_2}$,
$$ z(T_\omega)\in {\cal S}_{\alpha_1,\alpha_2}^c\quad\textnormal{with}\quad T_\omega=\omega (\theta_2-\theta_1).$$

One can then find a line 
passing  through $0$ 
and dividing $\RR^2$
into two half-planes  $D^+$ and $D^-$ such that  ${\cal S}_{\alpha_1,\alpha_2}$ is included in $D^-$ and $z(T_\omega)\in D^+$. Owing to the symmetry of a one-dimensional centered Gaussian distribution, we have
$$\PE((X_{T_\omega},Y_{T_\omega})\in D^-)= \PE((X_{T_\omega},Y_{T_\omega})\in D^+)=\frac{1}{2}.$$
One finally deduces that
for every $(x,y)\in{\cal S}_{\alpha_1,\alpha_2}$,
$$\PE((X_{T_\omega},Y_{T_\omega})\in {\cal S}_{\alpha_1,\alpha_2}^c)\ge\frac{1}{2}.$$
and thus that 
$$\forall\, (x,y)\in{\cal S}_{\alpha_1,\alpha_2},\quad \PE_{(x,y)}(\tau_{{\cal S}_{\alpha_1,\alpha_2}}>T_\omega)\le \frac{1}{2}.$$
The end of the proof is then identical to that of Proposition \ref{upper1erresult}. \wwtbp


\end{proof}
\smallskip
We now consider our initial bubble process $(Z_t)_{t\ge0}$ which is solution of Equation \eqref{eds2}. We have the following result.
 \begin{pro}\label{upper2result}
 Let $(Z_t)_{t\ge0}$ be a solution to \eqref{eds2}. Let ${\cal S}_{\alpha_1,\alpha_2}$ be defined by \eqref{st1t2} where $\alpha_1,\alpha_2\in\ER$ and $\alpha_1<\alpha_2$. 
  Then, for any $(x_0,y_0)\in {\cal S}_{\alpha_1,\alpha_2}$, 
$$\PE(\tau_{{\cal S}_{\alpha_1,\alpha_2}}\ge t)\le2\exp\lt(-\frac{\ln 2}{\tilde{\theta}_1-\tilde{\theta}_2}{\omega}t\rt)$$
with $\tilde\theta_i={\rm Arctan}\lt(\frac{a/2-b-\alpha_i}{\omega}\rt)$, $i=1,2$. 
 \end{pro}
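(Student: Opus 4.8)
The plan is to reduce Proposition \ref{upper2result} to Lemma \ref{lemma:ass} by means of the linear change of variables introduced in Corollary \ref{cor:trmod}. Recall that for $v=(\frac{1}{b^2}(\frac{a}{2}-b),1)^*$ and a suitable $\alpha$, the process $\hat Z_t=\sqrt{\omega}\,\alpha\,P_v^{-1}Z_{t/\omega}$ solves an equation of the form $d\hat Z_t=-\frac{a}{2\omega}\hat Z_t\,dt+J_2\hat Z_t\,dt+\Sigma'\,dW_t$, i.e.\ exactly the type of system to which Lemma \ref{lemma:ass} applies with $\rho=\frac{a}{2\omega}$ and angular speed $1$ (the time has been rescaled by $\omega$). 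So the first step is: write down explicitly how a sector ${\cal S}_{\alpha_1,\alpha_2}$ for $Z$ is transformed under $\zeta\mapsto P_v^{-1}\zeta$ (the scaling by $\sqrt\omega\,\alpha$ and the time change $t\mapsto t/\omega$ do not affect which sector a point lies in, only the speed at which it is traversed).

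Second, I would carry out the image computation. Using $P_v^{-1}=\begin{pmatrix}0&1\\ \frac{b^2}{\omega}&\frac{1}{\omega}(\frac a2-b)\end{pmatrix}$ from Corollary \ref{cor:trmod}, a point $(x,y)$ is sent to $(u,v)=(y,\ \frac{b^2}{\omega}x+\frac{1}{\omega}(\frac a2-b)y)$. The half-line $y=\alpha_i x$ (with $x>0$) is therefore mapped to the half-line $u=\alpha_i x$, $v=\frac{1}{\omega}(b^2+(\frac a2-b)\alpha_i)x$, i.e.\ to the ray of slope $v/u=\frac{b^2+(\frac a2-b)\alpha_i}{\omega\alpha_i}$; more usefully, I would note that the boundary ray $\{y=\alpha_i x\}$ is the set $\{ -\alpha_i x+y=0\}$, and track what linear functional cuts it out after the change of variables, so as to read off the new polar angles. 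The upshot should be that the transformed domain is again an angular sector, with half-angles $\tilde\theta_i={\rm Arctan}\big(\frac{a/2-b-\alpha_i}{\omega}\big)$; here one must be careful about orientation, since $P_v^{-1}$ may reverse the cyclic order, which is why the exponent in the statement features $\tilde\theta_1-\tilde\theta_2$ rather than $\tilde\theta_2-\tilde\theta_1$ — with $\alpha_1<\alpha_2$ one checks $\tilde\theta_1>\tilde\theta_2$, so the angular width is the positive number $\tilde\theta_1-\tilde\theta_2$.

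Third, I would invoke Lemma \ref{lemma:ass} for $\hat Z$: since $\hat Z$ runs at unit angular speed and $\tau_{{\cal S}_{\alpha_1,\alpha_2}}$ for $Z$ equals $\frac{1}{\omega}$ times the exit time of the transformed sector for $\hat Z$ (because $\hat Z_t$ records $Z_{t/\omega}$), the bound $\PE(\tau^{\hat Z}_{\hat{\cal S}}\ge s)\le 2\exp(-\frac{\ln 2}{\tilde\theta_1-\tilde\theta_2}s)$ translates, after putting $s=\omega t$, into $\PE(\tau_{{\cal S}_{\alpha_1,\alpha_2}}\ge t)\le 2\exp(-\frac{\ln 2}{\tilde\theta_1-\tilde\theta_2}\omega t)$, which is the claim. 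One subtlety: Lemma \ref{lemma:ass} is stated for sectors contained in $\{x>0\}$ of angular size $<\pi$; I should check (or remark, as the authors did for the lemma) that the transformed sector still has angular size $<\pi$, which holds because $P_v^{-1}$ is linear and invertible, hence sends a sector of opening $<\pi$ to a sector of opening $<\pi$.

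The main obstacle is the bookkeeping in step two: getting the arctangent expressions exactly right, including the orientation-reversal that flips $\theta_2-\theta_1$ into $\tilde\theta_1-\tilde\theta_2$, and making sure the half-plane/line argument inside Lemma \ref{lemma:ass} still applies verbatim to the image sector (it does, because that argument only used that the image of a sector under the deterministic flow after the corresponding time is disjoint from the sector, together with the rotational invariance built into $J_2$ and the symmetry of centered Gaussians — all of which are preserved). Everything else is a direct quotation of Corollary \ref{cor:trmod} and Lemma \ref{lemma:ass}.
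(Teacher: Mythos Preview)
Your strategy is exactly the paper's: transform via Corollary \ref{cor:trmod} to a process with constant angular speed and invoke Lemma \ref{lemma:ass}. There is, however, a genuine bookkeeping issue in step two that you flagged yourself as ``the main obstacle.''

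You chose the vector $v=(\frac{1}{b^2}(\frac{a}{2}-b),1)^*$ from the \emph{second} part of Corollary \ref{cor:trmod}. That choice is tailored to put the noise on a single coordinate, which is irrelevant for the upper bound (Lemma \ref{lemma:ass} never uses the form of $\Sigma$). With that $v$, your assertion that the transformed boundary rays have polar angles $\tilde\theta_i=\arctan\big(\frac{a/2-b-\alpha_i}{\omega}\big)$ is \emph{not} correct: the slope you computed, $\frac{b^2+(\frac a2-b)\alpha_i}{\omega\alpha_i}$, does not equal $\frac{a/2-b-\alpha_i}{\omega}$. What \emph{is} true is that any two admissible $P_v^{-1}$ differ by a matrix commuting with $J_2$, i.e.\ a rotation--scaling, so the angular \emph{width} of the image sector is independent of $v$; hence your choice would ultimately yield the same exponent, but only after an extra argument. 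Moreover, with your $v$ the first image coordinate is $u=y$, so when $\alpha_1<0<\alpha_2$ the image sector straddles $u=0$ and Lemma \ref{lemma:ass} as \emph{stated} (sectors inside $\{x>0\}$) does not apply; you would need its extension to arbitrary sectors of opening $<\pi$.

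The paper instead picks $v=(1,\frac a2-b)^*$, giving
\[
P_v=\begin{pmatrix}1&0\\ \frac a2-b&-\omega\end{pmatrix},\qquad P_v\tilde z=\begin{pmatrix}\tilde x\\ (\tfrac a2-b)\tilde x-\omega\tilde y\end{pmatrix},
\]
so $x=\tilde x$ and the condition $\alpha_1 x<y<\alpha_2 x$ becomes $\frac{a/2-b-\alpha_2}{\omega}\tilde x<\tilde y<\frac{a/2-b-\alpha_1}{\omega}\tilde x$ with $\tilde x>0$. This is again a sector of the form \eqref{st1t2}, so Lemma \ref{lemma:ass} applies verbatim and the boundary angles are exactly the $\tilde\theta_i$. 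Replacing your $v$ by this one removes the gap and your proof then coincides with the paper's.
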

 \begin{rem} Taking $\alpha_1=-\infty$ and $\alpha_2=+\infty$, we retrieve Proposition \ref{upper1erresult} since ${\cal S}_{-\infty,+\infty}$ then corresponds to the half-plane $\{x>0\}$. Note that contrary to Lemma \ref{lemma:ass}, the exponential rate is not directly proportional to  $\omega$. More precisely, due to the non constant angular speed, $\tilde{\theta}_1$ and $\tilde{\theta}_2$   depend on $\omega$. For the particular domain of Proposition \ref{upper1erresult} this dependence does not appear since, even if the the angular rate is not constant, the time to do a \textit{U-turn} is still proportional to $\omega$.
\end{rem}
\begin{proof}
By Corollary \ref{cor:trmod}, for any $v$ of $\ER^2\setminus\{0\}$, $(\tilde{Z}_t)_{t\ge0}=(P_v^{-1} Z_\frac{t}{\omega})_{t\ge0}$ (where $P_v=(v,Bv)$) is a solution of 
$$d\tilde{Z}_t=(-\rho I+ J_2)\tilde{Z}_t+ \tilde{\Sigma} dW_t$$
where $\tilde{\Sigma}$ is a constant real matrix (whose exact expression is not so   important) and where $\rho = a/(2\omega)$. In the new basis $\tilde{\cal B}=(v,Bv)$, 
$${\cal S}_{\alpha_1,\alpha_2}=\{\tilde{z}=(\tilde{x},\tilde{y})_{\tilde{\cal B}}\in\ER^2, \alpha_1 (P_v \tilde{z})_1<(P_v \tilde{z})_2<\alpha_2 (P_v \tilde{z})_1\}.$$ 
Setting $v=(1,\frac{a}{2}-b)$, we deduce from Equation \eqref{matrixpv} that 
$$
P_v=\begin{pmatrix} 1 & 0\\
(\frac{a}{2}-b) & -{\omega}\end{pmatrix}
$$
In such a case
\begin{equation*}
P_v\tilde{z}=\begin{pmatrix} \tilde{x}\\
&\\
(\frac{a}{2}-b)\tilde{x} -{\omega}\tilde{y}\end{pmatrix}
\end{equation*}
so that
$${\cal S}_{\alpha_1,\alpha_2}=\{\tilde{z}=(\tilde{x},\tilde{y})_{\tilde{\cal B}}\in\ER^2, \left(\frac{a}{2}-b-\alpha_2\right)\tilde{x}<\omega \tilde{y}<\left(\frac{a}{2}-b-\alpha_1\right)\tilde{x}\}.$$
Thus, we deduce from Lemma \ref{lemma:ass} that 
$$\PE_{\tilde{x},\tilde{y}}(\tilde{\tau}_{{\cal S}_{\alpha_1,\alpha_2}}\ge t)\le 2\exp\lt(-\frac{\ln 2}{\tilde{\theta}_1-\tilde{\theta}_2} t\rt)$$
where for a given domain $A$, $\tilde{\tau}_A:=\inf\{t\ge0, \tilde{Z}_t\in A^c\}$ and $\tilde{\theta}_i={\rm Arctan}\lt(\frac{a/2-b-\alpha_i}{\omega}\rt)$, $i=1,2$. The result follows. \wwtbp
\end{proof}
\subsection{Lower-bound\label{sec:lbound}}

\subsubsection{General tool}
In this second part, our aim is to obtain the lower-bound part of Theorem \ref{T1}, in particular  we want to derive a upper-bound on:
\bq\bar{\lambda}&\df&\underset{t\rightarrow+\infty}{\limsup}\,-\frac{1}{t}\log(\PE(\tau \ge t)).\eq
The results of this section are based on the following (classical) proposition.
\begin{pro}\label{prodynkin} Let $(X_t)_{t\ge0}$ be a $\ER^d$-valued Markov process with infinitesimal generator $L$ and initial distribution ${m_0}$. Let  ${\cal S}$ be an (open) domain of $\ER^d$ and assume that ${m_0}({\cal S})=1$. Let $\tau:=\inf\{t>0, X_t\in {\cal S}^c\}$. Then,
if there exists a bounded function $f:\ER^d\rightarrow\ER$ and $\lambda\in\ER$ such that 
\begin{equation}\label{eq:dirichletcond}
\begin{cases}& f_{/\partial {\cal S}}=0\quad \textnormal{and} \quad f_{/{\cal S}}>0\\
& \forall x\in {\cal S},\quad Lf(x)\ge -\lambda f(x)
\end{cases}
\end{equation}
then,   $\ES_{m_0}[e^{\lambda\tau}]=+\infty$. As a consequence,
\bq
 \underset{t\rightarrow+\infty}{\limsup}\,-\frac{1}{t}\log(\PE_{m_0}(\tau\ge t))&\le &\lambda.
 \eq
 \end{pro}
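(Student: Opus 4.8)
The plan is to run a Dynkin--Lyapunov argument: the differential inequality $Lf\ge-\lambda f$ on ${\cal S}$ is turned into a submartingale property for the stopped, exponentially reweighted process $M_t\df e^{\lambda(t\wedge\tau)}f(X_{t\wedge\tau})$, and the two announced conclusions are then read off from the inequality $\ES_{m_0}[M_t]\ge\ES_{m_0}[M_0]$ together with an elementary upper bound on its left-hand side.

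First I would analyse $M=(M_t)_{t\ge0}$. Applying Dynkin's (Itô's) formula to $(s,x)\mapsto e^{\lambda s}f(x)$ along $X$ and stopping at $\tau$ gives the decomposition
\[
M_t\ =\ f(X_0)+A_t+N_t,\qquad A_t\df\int_0^{t\wedge\tau}e^{\lambda s}\,\big(\lambda f+Lf\big)(X_s)\,ds,
\]
where $(N_t)$ is a local martingale with $N_0=0$. For $s<\tau$ one has $X_s\in{\cal S}$, so by \eqref{eq:dirichletcond} the integrand is $\ge0$ and $(A_t)$ is non-decreasing. Localizing $N$, using $A\ge0$ and $N_0=0$, and then letting the localization run to infinity (dominated convergence applies since $f$, hence $M$ on each bounded time interval, is bounded) yields
\[
\ES_{m_0}[M_t]\ \ge\ \ES_{m_0}[M_0]\ =\ \ES_{m_0}[f(X_0)]\ =:\ c_0\ >\ 0,
\]
the strict positivity coming from $m_0({\cal S})=1$ together with $f_{/{\cal S}}>0$.

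Next I would bound $\ES_{m_0}[M_t]$ from above. On $\{\tau\le t\}$ one has $X_{t\wedge\tau}=X_\tau$, and since the trajectories are continuous $X_\tau\in\partial{\cal S}$, so $f(X_\tau)=0$ and $M_t=0$; on $\{\tau>t\}$, $M_t=e^{\lambda t}f(X_t)\le\|f\|_\infty\,e^{\lambda t}$. Hence $\ES_{m_0}[M_t]\le\|f\|_\infty\,e^{\lambda t}\,\PE_{m_0}(\tau>t)$, and combined with the previous inequality,
\[
\forall\,t\ge0,\qquad \PE_{m_0}(\tau>t)\ \ge\ \frac{c_0}{\|f\|_\infty}\,e^{-\lambda t}.
\]
This single estimate gives everything: when $\lambda>0$ it implies $\ES_{m_0}[e^{\lambda\tau}]=1+\lambda\int_0^{+\iy}e^{\lambda t}\,\PE_{m_0}(\tau>t)\,dt=+\iy$; and, using $\PE_{m_0}(\tau\ge t)\ge\PE_{m_0}(\tau>t)$, it gives $-\tfrac1t\log\PE_{m_0}(\tau\ge t)\le\lambda-\tfrac1t\log\!\big(c_0/\|f\|_\infty\big)$, whose $\limsup$ as $t\to+\iy$ is at most $\lambda$.

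The only delicate point is the submartingale step, which presupposes that $f$ lies in the domain of $L$ (or is regular enough for Itô's formula) and that $X$ has continuous paths, so that $X_\tau\in\partial{\cal S}$ and hence $f(X_\tau)=0$. Both hold for the hypoelliptic diffusion $Z$ to which the proposition will be applied, where moreover $f$ is taken of class $\cC^2$; apart from that the argument is routine, and the substantial work — deferred to the following subsections — is the construction of an admissible pair $(f,\lambda)$ satisfying \eqref{eq:dirichletcond} with $\lambda$ a small multiple of $\omega$.
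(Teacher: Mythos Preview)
Your proof is correct and uses the same Dynkin/Lyapunov scheme as the paper: both apply It\^o's formula to $e^{\lambda s}f(x)$, localize the martingale part, and exploit $Lf\ge-\lambda f$ on ${\cal S}$ to make the finite-variation part nonnegative. The paper then argues by contradiction---assuming $\ES_{m_0}[e^{\lambda\tau}]<+\infty$, dominated convergence forces $\ES_{m_0}[e^{\lambda(t\wedge\tau)}f(X_{t\wedge\tau})]\to0$, contradicting the lower bound $\int f\,dm_0>0$---and only afterwards deduces the $\limsup$ estimate via the identity $\ES_{m_0}[e^{\lambda\tau}]=\int\lambda e^{\lambda t}\PE_{m_0}(\tau>t)\,dt$. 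Your route is more direct: splitting $\ES_{m_0}[M_t]$ over $\{\tau\le t\}$ (where $f(X_\tau)=0$) and $\{\tau>t\}$ yields the quantitative bound $\PE_{m_0}(\tau>t)\ge (c_0/\lVe f\rVe_\infty)\,e^{-\lambda t}$, from which both announced conclusions follow at once. This is a small but genuine improvement, since the explicit exponential lower bound is exactly what Theorem~\ref{T1} claims (the constant $\epsilon(m_0)$ there), whereas the paper's version of Proposition~\ref{prodynkin} only gives the $\limsup$ statement.
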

 \begin{proof}
 Owing to the Dynkin formula, we have for every $t\ge 0$
 $$e^{\lambda (t\wedge \tau)} f (X_{t\wedge \tau})= f(X_0)+\int_0^{t\wedge \tau}  (\lambda f(X_{s\wedge \tau})+Lf(X_{s\wedge \tau}))e^{\lambda (s\wedge \tau)} ds+M_{t\wedge \tau}$$
 where $(M_t)$ is a local martingale. Owing to a localization argument and to Fatou's Lemma, we deduce that 
$$ \ES_{m_0}[e^{\lambda (t\wedge \tau)} f (X_{t\wedge \tau})]= \int f(x){m_0}(dx)+\ES_{m_0}\lt[\int_0^{t\wedge \tau}  (\lambda f(X_{s\wedge \tau})+Lf(X_{s\wedge \tau}))e^{\lambda (s\wedge \tau)} ds\rt].$$
Suppose that  $\ES_{m_0}[e^{\lambda\tau}]<+\infty$. Then, using the dominated convergence theorem and the fact that 
$f_{/\partial {\cal S}}=0$, we have
 
$$\lim_{t\rightarrow+\infty}\ES_{m_0}[e^{\lambda (t\wedge \tau)} f (X_{t\wedge \tau})]=0$$
whereas the fact that $Lf\ge -\lambda f$ implies that the right-hand term is uniformly lower-bounded by $\int f(x){m_0}(dx)$ which is (strictly) positive. This yields a contradiction.
Thus, $\ES_{m_0}[e^{\lambda\tau}]=+\infty$. The second assertion then classically follows from the equality
$$\ES_{m_0}[e^{\lambda \tau}]=\int_{\RR} \lambda e^{\lambda t}\PE_{m_0}(\tau>t)dt.$$ \wwtbp
\end{proof}

\smallskip
The end of  Section \ref{sec:lbound} is devoted to the construction of a function $f$ satisfying \eqref{eq:dirichletcond}.
In fact, for this part, the degeneracy of the process described by Equation \eqref{eds2} implies a significant amount of difficulties. That is why we propose in the next subsection to first focus on the elliptic case which can be handled  more easily.  Some of the ideas developed in this framework will then be extended to the initial hypoelliptic setting.
\subsubsection{The elliptic case}\label{subsec:ellip}
By Corollary \ref{cor:trmod}, we know that we can reduce the problem to the study of a process $(U_t,V_t)$ solution to \eqref{eq:hypocoercif}. In this part we focus on its elliptic counterpart: we consider a two dimensional process $(\xi_t)_{t\ge0}$ solution of the following stochastic differential equation
\begin{equation}\label{eq:xitt}
d\xi_t= (-\rho \xi_t+J_2\xi_t)dt+\sqrt{2}dW_t
\end{equation}
where $\rho$ is a real number and $W$ is a standard two-dimensional Brownian motion.\\
We define ${\cal S}_0=\{(x,y),x>0\}$.
In order to build a suitable function $f$ satisfying \eqref{eq:dirichletcond} with $D={\cal S}_0$ and relatively to the  infinitesimal  generator $L_\rho$  associated to \eqref{eq:xitt}, we first switch to  polar coordinates: Proposition \ref{prop:appendix1} (stated in the second appendix) shows that  $L_\rho$ is given on ${\cal C}^2(\ER_+^*\times\ER)$ by
\begin{equation}\label{opellippol}
L_\rho =-\rho r\partial_r  +\partial_\theta +\partial ^2_r +\frac{1}{r}\partial_r +\frac{1}{r^2}\partial^2_\theta.
\end{equation}
To  justify the approach developed below, let us
 forget formally the derivatives with respect to $r$ by fixing $r>0$ and by considering unknown angular functions $G_r\st [-\pi/2,\pi/2]\ri\RR$. The problem of finding $G_r$ and $\lambda_r$ such that \eqref{eq:dirichletcond} holds (with $D=(-\pi/2,\pi/2)$) reduces to solve the second order ordinary differential equation 
$$r^{-2} G''_r(\theta)+G'_r(\theta)=-\lambda_r G_r(\theta), \quad -\frac{\pi}{2}\le \theta\le \frac{\pi}{2}$$
with $G_r(\pi/2)=G_r(-\pi/2)=0$. The solutions are  given by exponential functions $G_r(\theta) = \alpha_1 e^{\rho_1 \theta} + \alpha_2 e^{\rho_2 \theta}$ where $(\rho_1,\rho_2)$ are the complex roots of the quadratic characteristic equation $X^2/r^2+X+\lambda_r=0$ associated to the linear ODE given above. One can easily check that $\Re(\rho_1)=\Re(\rho_2)= - r^2/2$ and 
the boundary conditions imply in particular to choose $\lambda_r$ such that $\Im(\rho_1) = - \Im(\rho_2) =1 $ otherwise the function $G_r$ cannot remain positive on $]- \pi/2;\pi/2[$.
This is possible if and only if
$\lambda_r=\frac{1}{r^{2}}+\frac{r^2}{4}$ and the solutions of this simplified spectral problem are  then proportional to the function defined by
\bq
\fo \theta\in[-\pi/2,\pi/2],\qquad G_r(\theta)&\df&e^{-\frac{r^2}{2} \theta}\cos \theta.\eq
The previous construction cannot really be extended to the exact initial  problem $L_{\rho} g= - \lambda g$. Nevertheless, this suggests  
to search potential solutions $g$ to the problem $L_{\rho} g \geq - \lambda g$
under the following form
\begin{equation}\label{eq:gg}
g(r,\theta)=re^{\beta(\theta) r^2}\cos (\theta),\quad\theta\in\lt[-\frac{\pi}{2},\frac{\pi}{2}\rt],\; r\ge0
\end{equation}
where $\beta:[-\pi/2,\pi/2]\rightarrow\ER$ is a ${\cal C}^2$-function which must be non-positive (due to the boundedness condition in Proposition \ref{prodynkin}).
 The action of the generator $L_{\rho}$ described in \eqref{opellippol} is given as follows (the proof is postponed to the second appendix).

\begin{pro}\label{prop:lrhog_elliptic}
For any $g \in \mathcal{C}^2\lt(\ER_+ \times \lt[-\frac{\pi}{2},\frac{\pi}{2}\rt] , \ER\rt)$ given by \eqref{eq:gg}, one has  
$$\forall (r,\theta)\in \ER_+^*\times \lt[-\frac{\pi}{2},\frac{\pi}{2}\rt]\qquad 
L_\rho g (r,\theta)= \lt[ \psi_1(\theta) r^2+\psi_2(\theta) \rt] g(r,\theta)$$
where 
\begin{align*}
&\psi_1(\theta)=-2\rho\beta(\theta)+ \beta'(\theta)+\left(4{\beta^2(\theta)}+(\beta'(\theta))^2\right)\\
&\psi_2(\theta)=-\rho+8\beta(\theta)-(1+2\beta'(\theta))\tan\theta+\beta''(\theta).
\end{align*}
\end{pro}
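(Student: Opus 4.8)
The plan is to prove this by a direct differentiation, using the polar expression \eqref{opellippol} of $L_\rho$ together with the explicit ansatz \eqref{eq:gg}. Writing $g(r,\theta)=r\cos\theta\,e^{\beta(\theta)r^2}$, the observation that organizes the whole computation is that every relevant partial derivative of $g$ equals $g$ times a polynomial in $r^2$ with $\theta$-dependent coefficients, so long as we work in the region $\cos\theta\neq 0$ where division by $g$ is legitimate. Concretely, I would first record the radial derivatives
$$\partial_r g=\frac{g}{r}\,(1+2\beta r^2),\qquad \partial_r^2 g=g\,(6\beta+4\beta^2 r^2),\qquad \tfrac1r\partial_r g=g\Big(\tfrac1{r^2}+2\beta\Big),$$
and then the angular ones
$$\partial_\theta g=g\,(\beta' r^2-\tan\theta),\qquad \tfrac1{r^2}\partial_\theta^2 g=g\Big((\beta')^2 r^2+\beta''-2\beta'\tan\theta-\tfrac1{r^2}\Big).$$
Each of these is an elementary application of the product and chain rules; the only mild care needed is that the factor $\cos\theta$ of $g$ sits in the denominator (through $\tan\theta$), so the identities are established for $\theta\in(-\pi/2,\pi/2)$ and then extended to the endpoints by continuity --- where anyway $g$ vanishes, which is all that matters when the result is fed into Proposition \ref{prodynkin}, whose domain $\mathcal{S}$ is open.

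Next I would substitute the five expressions above, together with $-\rho r\partial_r g=-\rho g\,(1+2\beta r^2)$, into \eqref{opellippol} and collect the coefficient of $g$ by powers of $r$. The coefficient of $r^2$ comes out as $-2\rho\beta+\beta'+4\beta^2+(\beta')^2$, which is exactly $\psi_1(\theta)$, and the coefficient of $r^0$ as $-\rho+(6\beta+2\beta)-(1+2\beta')\tan\theta+\beta''=-\rho+8\beta-(1+2\beta')\tan\theta+\beta''$, which is exactly $\psi_2(\theta)$. The one point worth flagging --- and really the only thing that makes the ansatz \eqref{eq:gg} viable --- is the exact cancellation of the $r^{-2}$ contributions: the $+r^{-2}$ produced by $\tfrac1r\partial_r g$ is killed by the $-r^{-2}$ produced by $\tfrac1{r^2}\partial_\theta^2 g$. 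This is no accident: it reflects that $r\cos\theta=x$ is harmonic for the flat polar Laplacian $\partial_r^2+\tfrac1r\partial_r+\tfrac1{r^2}\partial_\theta^2$, together with the fact that $\cos\theta$ is the first Dirichlet eigenfunction of $\partial_\theta^2$ on $(-\pi/2,\pi/2)$.

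There is no genuine obstacle here beyond careful bookkeeping of the many $\beta$, $\beta'$, $\beta''$ and $\tan\theta$ terms. As an independent sanity check one can compute $L_\rho g$ in Cartesian coordinates directly from the generator of \eqref{eq:xitt} and test the special case $\beta\equiv 0$: there $g=r\cos\theta=x$, and the formula collapses to $L_\rho x=-\rho x-y=(-\rho-\tan\theta)\,r\cos\theta$, consistent with $\psi_1\equiv 0$ and $\psi_2=-\rho-\tan\theta$, which is reassuring.
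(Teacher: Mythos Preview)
Your proof is correct and follows essentially the same route as the paper: compute each partial derivative of $g$, express it as $g$ times a polynomial in $r$ with $\theta$-dependent coefficients, substitute into \eqref{opellippol}, and collect terms. The paper carries out the derivative computations in the slightly more general form $g(r,\theta)=r^n\gamma(\theta)e^{\beta(\theta)r^2}$ (so as to cover Proposition~\ref{prop:lrhog_hypo} simultaneously) and then specializes to $n=1$, $\gamma=\cos\theta$, but the substance is identical; your remark on the $r^{-2}$ cancellation via the harmonicity of $x=r\cos\theta$ is a pleasant addition.
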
 

In order to apply Proposition \ref{prodynkin}, the problem is now reduced to find a non-positive angular function $\beta$ such that  $\frac{L_\rho g}{g}$ is lower-bounded on $\ER_+^*\times ]-\frac{\pi}{2},\frac{\pi}{2}[$. 
 The previous computation shows that we mainly need to satisfy the following constraint
$$\forall \theta\in\lt]-\frac{\pi}{2},\frac{\pi}{2}\rt[, \quad\psi_1(\theta)\ge0.$$
An admissible value of $\lambda_{\rho}$ for the spectral inequality $L_{\rho} g \geq - \lambda_\rho g$ will  then be obtained by
$$ \lambda_\rho:=\inf_{\theta\in]-\frac{\pi}{2},\frac{\pi}{2}[}\psi_2(\theta)>-\infty.$$
 Note that this implies in particular that
$$\limsup_{\theta\rightarrow\frac{\pi}{2}} 1+2\beta'(\theta)\le 0\quad\textnormal{and}\quad\liminf_{\theta\rightarrow-\frac{\pi}{2}} 1+2\beta'(\theta)\ge 0.$$
A solution of the problem is given in the next proposition.
\begin{pro} (i) Let $\rho\ge 0$ and let $g$ be given by \eqref{eq:gg} with
\begin{equation}
\beta(\theta)=\begin{cases}\frac{1}{4}(1-\sqrt{3})&\textnormal{if $\theta\in[-\frac{\pi}{2},\frac{\pi}{4})$}\\
\frac{1}{4}(\sin(2\theta)-\sqrt{3}) &\textnormal{if $\theta\in[\frac{\pi}{4},\frac{\pi}{2}]$}.
\end{cases}
\end{equation}
Then, for every $r>0$ and $\theta\in [-\pi/2,\pi/2]$ such that $\theta\neq\pi/4$, 
$$L_\rho g(r,\theta)\ge -\lambda_\rho g(r,\theta) \quad \textnormal{with $\lambda_\rho=2\sqrt{3}+\rho$}.$$
(ii) Let $\rho\ge 0$ and consider $(\xi_t)_{t\ge0}$ solution to \eqref{eq:xitt}. Then, for every $(x_0,y_0)\in\ER^2$ such that
$x_0>0$,
$$\limsup_{t\rightarrow+\infty}\,-\frac{1}{t}\log(\PE_{(x_0,y_0)}(\tau \ge t))\le \lambda_\rho$$
where $\tau:=\inf\{t\ge0, (\xi_t)_1<0\}.$
\end{pro}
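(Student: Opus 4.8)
\emph{Part (i).} This is a direct computation sitting on top of Proposition \ref{prop:lrhog_elliptic}. That proposition gives $L_\rho g=[\psi_1(\theta)r^2+\psi_2(\theta)]g$, and since $g(r,\theta)=re^{\beta(\theta)r^2}\cos\theta\ge0$ on $\ER_+^*\times[-\pi/2,\pi/2]$, it suffices to check, for every $\theta\in[-\pi/2,\pi/2]\setminus\{\pi/4\}$, the two pointwise inequalities $\psi_1(\theta)\ge0$ and $\psi_2(\theta)\ge-\lambda_\rho$; then $L_\rho g\ge\psi_2(\theta)\,g\ge-\lambda_\rho g$. I would treat the two branches of $\beta$ separately. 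On $[-\pi/2,\pi/4)$, $\beta\equiv\frac14(1-\sqrt3)$ is constant, so $\psi_1=2\beta(2\beta-\rho)>0$ (a product of two negative numbers) and $\psi_2=-\rho+8\beta-\tan\theta=-\lambda_\rho+(2-\tan\theta)>-\lambda_\rho$, because $\tan\theta<1$ there. On $[\pi/4,\pi/2]$, with $\beta=\frac14(\sin2\theta-\sqrt3)$, $\beta'=\tfrac12\cos2\theta$, $\beta''=-\sin2\theta$, the Pythagorean identity collapses $4\beta^2+(\beta')^2$ to $1-\tfrac{\sqrt3}{2}\sin2\theta$, yielding
\[
\psi_1(\theta)=-\tfrac{\rho}{2}\bigl(\sin2\theta-\sqrt3\bigr)+1+\cos\bigl(2\theta+\tfrac{\pi}{3}\bigr)\ \ge\ 0
\]
(both summands are $\ge0$ since $\sin2\theta\le1<\sqrt3$ and $\rho\ge0$), while the identity $(1+\cos2\theta)\tan\theta=\sin2\theta$ makes $\psi_2$ collapse exactly to $-\rho-2\sqrt3=-\lambda_\rho$. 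In fact the second piece of $\beta$ is designed precisely so that the constraint $\psi_2\ge-\lambda_\rho$ is saturated on $[\pi/4,\pi/2]$. Since the two branches cover all of $[-\pi/2,\pi/2]$, this settles (i).

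\emph{Part (ii).} The plan is to apply Proposition \ref{prodynkin} to the process $(\xi_t)$ with $\cS=\cS_0=\{x>0\}$, $f=g$ and $\lambda=\lambda_\rho$. The structural hypotheses are quickly verified: $\beta(\theta)\le\frac14(1-\sqrt3)=:-\delta<0$ uniformly, hence $0\le g(r,\theta)\le re^{-\delta r^2}$ is bounded; $g$ vanishes on $\partial\cS_0$ (where either $r=0$ or $\theta=\pm\pi/2$, so $\cos\theta=0$) and is positive on $\cS_0$; and $L_\rho g\ge-\lambda_\rho g$ on $\cS_0$ is part (i). The conclusion $\limsup_{t\to+\infty}-\frac1t\log\PE_{(x_0,y_0)}(\tau\ge t)\le\lambda_\rho$ then follows, provided one handles the single non-routine point.

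\emph{Main obstacle.} That point is that $g$ is only $\cC^1$, not $\cC^2$, across the ray $\{\theta=\pi/4\}$ (there $\beta''$ jumps from $0$ to $-1$), so the Dynkin/Itô formula underlying Proposition \ref{prodynkin} is not literally applicable. I would remove this in one of two equivalent ways. First option: note that $g\in\cC^1(\ER^2)$ with $\na g$ locally Lipschitz (because $\beta\in\cC^1$ and $\beta''$ is bounded), so the generalized Itô formula applies with the a.e.\ defined second-order term and no local-time correction; and since $(\xi_t)$ is a nondegenerate diffusion with smooth (Gaussian) transition density, it spends zero Lebesgue time on the Lebesgue-null ray $\{\theta=\pi/4\}$, so that $L_\rho g(\xi_s)\ge-\lambda_\rho g(\xi_s)$ holds for a.e.\ $s$ along every trajectory, which is all the proof of Proposition \ref{prodynkin} uses. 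Second, more hands-on, option: mollify $\beta$ on a neighbourhood of $\pi/4$ into $\beta_\varepsilon\in\cC^2$; since on that neighbourhood $\beta'\approx0$ and $\beta''_\varepsilon$ can be kept in $[-1,0]$ up to $O(\varepsilon)$, the estimates of (i) survive as $\psi_1^{(\varepsilon)}\ge0$ and $\psi_2^{(\varepsilon)}\ge-\lambda_\rho-C\varepsilon$; applying Proposition \ref{prodynkin} to the corresponding $g_\varepsilon$ and letting $\varepsilon\to0$ gives the bound with $\lambda_\rho$. (The harmless discrepancy between $\{x<0\}$ and $\{x\le0\}$ in the two definitions of $\tau$ disappears, because the elliptic first component crosses $\{x=0\}$ immediately after touching it.) I expect this regularity bookkeeping to be the only delicate part; (i) itself is a routine, if slightly intricate, trigonometric verification.
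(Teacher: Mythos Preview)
Your proposal is correct and follows essentially the same approach as the paper: Part (i) is the same trigonometric verification via Proposition \ref{prop:lrhog_elliptic} (the paper does $\rho=0$ first and then notes that the extra term $-2\rho\beta\ge0$, whereas you carry general $\rho$ throughout, but the computations coincide), and Part (ii) is the direct application of Proposition \ref{prodynkin}. The only substantive difference is that you justify the $\cC^1$/piecewise-$\cC^2$ issue more carefully than the paper, which simply remarks that such $g$ still belongs to the domain of $L_\rho$ so that It\^o's formula and Proposition \ref{prodynkin} remain valid.
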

\begin{rem} Note that by the scaling and linear transformations previously described, this result can be transferred to general
elliptic two-dimensional Ornstein-Uhlenbeck evolutions whose linear drift is given via a matrix admitting complex conjugate eigenvalues (and whose  trajectories have thus a tendency to turn around $(0,0)$).

The function $g$ is a ${\cal C}^1$-function but only a piecewise ${\cal C}^2$-function. However, since these functions still belong to the domain of $L_\rho$, the conclusions of Proposition \ref{prodynkin} still hold. Also note that if we now switch to cartesian coordinates, the counterpart of $g$ has the following (nice) form:
$$f(x,y)=xe^{-\frac{\sqrt{3}-1}{4}(x^2+y^2)}e^{-\frac{(x-y)^2}{4}1_{\{y\ge x\}}}.$$
\end{rem}
\begin{proof} (i) First, assume that $\rho=0$ and consider  the function $\beta$ stated in the former statement.
One checks that $\beta$ is a piecewise ${\cal C}^2$-function on $[-\pi/2,\pi/2]$. Note that we can use this function since Itô's formula is still available in this case. Furthermore, we check that 
\begin{equation}
\psi_1(\theta)=\begin{cases}1-\frac{\sqrt{3}}{2}&\textnormal{if $\theta\in[-\frac{\pi}{2},\frac{\pi}{4})$}\\
1+\cos(\frac{\pi}{3}+2\theta) &\textnormal{if $\theta\in[\frac{\pi}{4},\frac{\pi}{2}]$}
\end{cases}
\end{equation}
so that $\psi_1$ is non-negative on $[-\frac{\pi}{2},\frac{\pi}{2}]$.  As well, easy computations yield:
\begin{equation}
\psi_2(\theta)=\begin{cases}2-2\sqrt{3}-\tan \theta&\textnormal{if $\theta\in[-\frac{\pi}{2},\frac{\pi}{4})$}\\
-2\sqrt{3} &\textnormal{if $\theta\in[\frac{\pi}{4},\frac{\pi}{2}]$}
\end{cases}
\end{equation}
It follows that $\psi_2$ is lower-bounded by $-2\sqrt{3}$ and the result follows when $\rho=0$.
The extension to the case $\rho>0$ is obvious using that $-\rho\beta$ is a non-negative function. \par
\smallskip
(ii) This statement follows from  Proposition \ref{prodynkin}, since $g$ is ${\cal C}^1$ and  piecewise ${\cal C}^2$. \wwtbp
\end{proof}
\begin{rem}\label{rem:interpretation}
Figure \ref{fig:vector1} represents the partition of the state space $\RR^2$ (seen as $\ER_+ \times [0,2\pi)$ in the second picture) for the construction of the function $\beta$ (and  $g$) as well as the function $g(r,\theta)$ for several values of $r$. We should understand the function $g$ as follows: $g(r,\theta)$ must be large when the dynamical system is suspected to take 
long time to exit the set ${\cal{S}}_0=\ER_+^* \times \lt( -\pi/2,\pi/2\rt)$ from $(r,\theta)$. Conversely, it should be small in the region where the vector field of the underlying determinist dynamical system push the trajectories out of  ${\cal{S}}_0$.
As pointed out by Figure \ref{fig:vector1}, we do not need to consider sub-domain of ${\cal{S}}_0$: the action of the Brownian motion is elliptic and we can always build some trajectories starting from any point of ${\cal{S}}_0$ and staying an arbitrarily long time in ${\cal{S}}_0$. Note that when $r$ is small, the starting point is near the origin, whatever the value of $\theta$ is and hence, the function $g(r,\theta)$ is small (see the right side of Figure \ref{fig:vector1}).

\begin{figure}[h]
 \centering
\includegraphics[height=6cm]{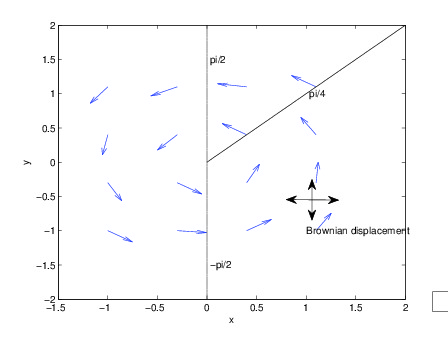}
\includegraphics[height=6cm]{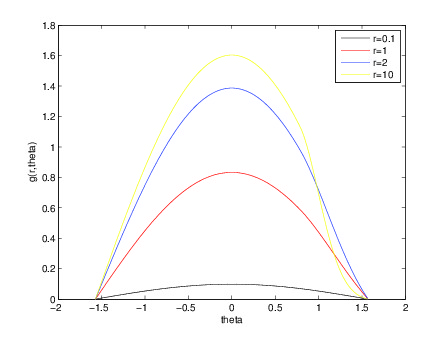}
\caption{\label{fig:vector1} Left: the domain to avoid is $\theta \in \lt[\pi/2,3\pi/2\rt]$. The elliptic situation is illustrated by the full rank black double arrow: the Brownian motion always move in all directions. In blue: rotation + homothety vector field. Right: function $\theta \mapsto g(r,\theta)$ for several values of $r$.}
\end{figure}

\end{rem}

\subsubsection{The hypoelliptic case}\label{subsec:hypo}
We now come back to the study of the lower-bound of Theorem \ref{T1}. This result is proved in Proposition \ref{pro:low2} stated below.
We know from Corollary \ref{cor:trmod} that up to  linear changes of variables in time  and space, the initial dynamic may be reduced to the simplified stochastic evolution  described by Equation  \eqref{eq:hypocoercif}.
Again let us  write down the corresponding infinitesimal generator ${\cal L}_\rho$ 
in 
polar coordinates (see Proposition \ref{prop:appendix1} given in the second appendix):

\begin{equation}\label{operhypo}
{\cal L}_\rho=-\rho r \partial_r  +\partial_\theta +\frac{\sin^2\theta}{2}\partial^2_{rr}-\frac{\sin\theta \cos\theta}{r^2}\partial_\theta
 +\frac{\sin\theta \cos\theta}{r}\partial^2_{r\theta}+\frac{\cos^2\theta}{2r}\partial_r+\frac{\cos^2\theta}{2r^2}\partial^2_\theta
 \end{equation}
 with $\rho=-\frac{a}{2\omega}$.
 As mentioned before, we would like to use a strategy similar to the one considered  in the elliptic case. However, the hypoelliptic problem is more involved. Roughly speaking, the degeneracy of the diffusive component implies that in the neighbourhood of $\pi/2$, the paths of the solutions to \eqref{eq:hypocoercif} can not be strongly slowed down by the action of the Brownian motion (see Remark \ref{rem:interpretation} and the study on Brownian bridges below). In other words, we are not able (and it seems indeed impossible) to build a function $\beta$ such that the function $\psi_2$ defined in the previous subsection is lower-bounded. Thus, the idea is to reduce the domain to a smaller angular sector ${\cal S}$ included in $\{(x,y),x>0\}$ where the diffusive action of the Brownian motion is more likely to keep the process in ${\cal S}$.
 
Consequently, we consider a more general class of functions $g$ (which must be calibrated in the sequel) and define
\begin{equation}\label{eq:ghyp} 
g(r,\theta)=r^n{\gamma}(\theta) e^{\beta(\theta)r^2}
\end{equation} 
where $n$ is a positive integer and ${\gamma}$ and $\beta$ are some sufficiently smooth functions. Now $\beta$ should be bounded above by a negative constant for $g$ to have a chance to be bounded. The new function ${\gamma}$ will be chosen in order that $g$ is positive in the interior of the angular sector and vanishes on the boundary of $\cal S$. We first describe the effect of ${\cal L}_{\rho}$ on such a function $g$ (the computations are deferred to the second appendix).

\begin{pro}\label{prop:lrhog_hypo}
For any $g \in \mathcal{C}^2\lt(\ER_+ \times \lt[-\frac{\pi}{2},\frac{\pi}{2}\rt] , \ER\rt)$ given by \eqref{eq:ghyp}, one has  
$$\forall (r,\theta)\in \ER_+^*\times \lt[-\frac{\pi}{2},\frac{\pi}{2}\rt]\qquad 
L_\rho g (r,\theta)= \lt[ \varphi_1(\theta) r^2+\varphi_2(\theta)+\frac{\varphi_3(\theta)}{r^2} \rt] g(r,\theta)$$
where 
\begin{align*}
&\varphi_1(\theta)=-2\rho\beta(\theta)+ \beta'(\theta)+\left(2\sin\theta{\beta(\theta)}+\cos\theta\beta'(\theta)\right)^2,\\
&\varphi_2(\theta)=-n\rho+\beta(\theta)\left((4n+2)\sin^2\theta+2\cos^2\theta\right)\\
&+(1+2\beta'(\theta) \cos^2\theta+4\beta(\theta)\sin\theta\cos\theta)\frac{{\gamma}'}{{\gamma}}(\theta)+\beta''(\theta) \cos^2(\theta)+
2(n+1)\cos\theta\sin\theta\beta'(\theta),\\
&\varphi_3(\theta)=(n^2-n)\sin^2\theta+\cos^2\theta(n+\frac{{\gamma}''(\theta)}{{\gamma}(\theta)})+2(n-1)\sin\theta\cos\theta\frac{{\gamma}'(\theta)}{{\gamma}(\theta)}.
\end{align*} 
\end{pro}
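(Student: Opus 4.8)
The statement is established by a direct computation, so the plan is simply to differentiate the ansatz \eqref{eq:ghyp}, substitute the derivatives into the expression \eqref{operhypo} for $\mathcal{L}_\rho$, and sort the result by powers of $r$. Writing $g(r,\theta)=r^n\gamma(\theta)e^{\beta(\theta)r^2}$ and dividing through by $g$ from the start (legitimate since $g>0$ on $\ER_+^*\times(-\pi/2,\pi/2)$), I would first record the logarithmic derivatives
$$\frac{\partial_r g}{g}=\frac nr+2\beta(\theta)r,\qquad \frac{\partial_\theta g}{g}=\frac{\gamma'(\theta)}{\gamma(\theta)}+\beta'(\theta)r^2 .$$
The exponential form is what keeps the computation under control: each further differentiation follows from $\partial^2 g/g=(\partial g/g)^2+\partial(\partial g/g)$, so no genuinely new structure appears.

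The resulting second derivatives are
$$\frac{\partial_r^2 g}{g}=\frac{n^2-n}{r^2}+(4n+2)\beta+4\beta^2r^2,\qquad \frac{\partial_{r\theta}^2 g}{g}=\frac nr\frac{\gamma'}{\gamma}+\Big(2\beta\frac{\gamma'}{\gamma}+(n+2)\beta'\Big)r+2\beta\beta'r^3 ,$$
together with, after using $(\gamma'/\gamma)'=\gamma''/\gamma-(\gamma'/\gamma)^2$ so that the squares of $\gamma'/\gamma$ cancel,
$$\frac{\partial_\theta^2 g}{g}=\frac{\gamma''}{\gamma}+\Big(\beta''+2\beta'\frac{\gamma'}{\gamma}\Big)r^2+(\beta')^2r^4 .$$
I would then substitute these five quantities into \eqref{operhypo} and multiply each by the corresponding $\theta$- and $r$-dependent prefactor of $\mathcal{L}_\rho$. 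After this step every monomial produced carries an exponent in $\{r^{-2},r^0,r^2\}$, because the intermediate powers $r^{\pm1}$, $r^3$, $r^4$ recombine with the prefactors $r$, $r^{-1}$, $r^{-2}$ that occur in the drift and second-order terms. Collecting the coefficient of $r^{-2}$ gives $\varphi_3$; collecting that of $r^0$ gives $\varphi_2$ (the isolated $\gamma'/\gamma$ with coefficient $1$ coming from the drift term $\partial_\theta$); and collecting that of $r^2$ gives $\varphi_1$, once one recognises the perfect square $4\sin^2\theta\,\beta^2+4\sin\theta\cos\theta\,\beta\beta'+\cos^2\theta(\beta')^2=(2\sin\theta\,\beta+\cos\theta\,\beta')^2$.

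There is no conceptual obstacle here: the only real work is the bookkeeping, and the single point that requires care is tracking which of the seven terms of $\mathcal{L}_\rho$ feeds which power of $r$ (several feed two of them), together with the $\gamma'/\gamma$ cancellation just mentioned. As a sanity check, taking $n=1$ and $\gamma(\theta)=\cos\theta$ gives $\gamma''/\gamma=-1$, hence $\varphi_3\equiv0$, so that the $1/r^2$ singularity of $\mathcal{L}_\rho g/g$ disappears; this parallels the simplified elliptic computation behind Proposition \ref{prop:lrhog_elliptic} and explains why the ansatz \eqref{eq:ghyp} is built with $\gamma$ vanishing like a cosine at the boundary. The genuine difficulty lies downstream rather than in this lemma: in contrast with the elliptic case, no choice of $\beta$, $\gamma$ and $n$ will keep $\varphi_2$ bounded below as $\theta\to\pi/2$, which is precisely why the next subsection will have to restrict to a strictly smaller angular sector.
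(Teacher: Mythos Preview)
Your approach is correct and is exactly the one the paper takes in its appendix: compute the five logarithmic derivatives $\partial_r g/g$, $\partial_\theta g/g$, $\partial_r^2 g/g$, $\partial_{r\theta}^2 g/g$, $\partial_\theta^2 g/g$ for the ansatz $g=r^n\gamma(\theta)e^{\beta(\theta)r^2}$ (your formulas coincide with the paper's line by line), substitute them into \eqref{operhypo}, and sort by powers of $r$. Your additional remarks---the perfect-square identification for $\varphi_1$ and the sanity check $\varphi_3\equiv 0$ when $n=1$, $\gamma=\cos\theta$---are correct and go slightly beyond what the paper makes explicit.
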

We now need to find an (open) angular sector ${\cal S}=\{(r\cos\theta,r\sin\theta),\theta_1<\theta<\theta_2\}$, a positive integer $n$, some functions ${\gamma}$ and $\beta$ such that
\begin{enumerate}
\item{} $\gamma(\theta)>0$ on $(\theta_1,\theta_2)$, $\gamma(\theta_1)=\gamma(\theta_2)=0$, $\beta(\theta)\le 0$ on $[\theta_1,\theta_2]$,
\item{} $\varphi_1$ and $\varphi_3$ are non-negative on  ${\cal S}$,
\item{} $\varphi_2$ is lower-bounded.
\item{} $\beta$ is bounded above by a negative constant.
\end{enumerate}
This is the purpose of the next proposition.
\begin{pro}\label{prophyplow} Let $\rho\ge 0$.\\
(i) Let $g$ be defined by \eqref{eq:ghyp} with $n=2$,
\begin{equation}
{\gamma}(\theta)=\begin{cases}-\sin(2\theta)&\textnormal{if $\theta\in [-\frac{\pi}{2},-\frac{\pi}{4}]$}\\
\cos^2(\pi/4+\theta)&\textnormal{if $\theta\in [-\frac{\pi}{4},\frac{\pi}{4}]$}
\end{cases}
\end{equation}
and $\beta(\theta)=-\frac{1}{2}$. Then, for every $r>0$ and $\theta\in]-\frac{\pi}{2},\frac{\pi}{4}[$ with $\theta\neq -\frac{\pi}{4}$,
\bq {\cal L}_\rho g(r,\theta)&\ge& -(3+2\rho) g(r,\theta).\eq
(ii) As a consequence, for any open half-plane $H$ such that ${\cal S}:=\{(r\cos \theta,r\sin \theta),r>0,\theta\in]-\frac{\pi}{2},\frac{\pi}{4}[\}\subset H$, for any probability measure $m_0$ on $\ER^2$ such that $m_0(H)=1$, we have
\bq \limsup_{t\rightarrow+\infty}-\frac{1}{t}\log (\PE_{m_0}(\tau_{H}\ge t))&\leq &3+2\rho.\eq
\end{pro}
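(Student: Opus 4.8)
<br>

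The plan is to verify the four conditions listed just before the proposition for the explicit choice $n=2$, $\beta\equiv-\frac12$, and the piecewise-defined $\gamma$, and then invoke Proposition \ref{prodynkin} applied to the reduced process of Corollary \ref{cor:trmod}.

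\textbf{Part (i): checking the spectral inequality.} First I would substitute $\beta\equiv-\frac12$ (so $\beta'=\beta''=0$) and $n=2$ into the formulas for $\varphi_1,\varphi_2,\varphi_3$ of Proposition \ref{prop:lrhog_hypo}. With these substitutions $\varphi_1(\theta)=\rho+\sin^2\theta\ge 0$ for $\rho\ge 0$, which immediately settles the sign of the $r^2$-term. For $\varphi_3$ one gets $\varphi_3(\theta)=2\sin^2\theta+\cos^2\theta\bigl(2+\tfrac{\gamma''}{\gamma}\bigr)+2\sin\theta\cos\theta\,\tfrac{\gamma'}{\gamma}$; here I must plug in the two branches of $\gamma$. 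On $[-\pi/2,-\pi/4]$, $\gamma(\theta)=-\sin(2\theta)$ gives $\gamma'/\gamma=-2\cot(2\theta)$, $\gamma''/\gamma=-4$, and one checks the combination reduces to a manifestly nonnegative trigonometric expression; on $[-\pi/4,\pi/4]$, $\gamma(\theta)=\cos^2(\pi/4+\theta)$ gives $\gamma'/\gamma=-2\tan(\pi/4+\theta)$ and $\gamma''/\gamma=-2+2\tan^2(\pi/4+\theta)$, and again the terms combine to something nonnegative on the interior. The point of the particular choice of $\gamma$ is exactly to make $\varphi_3\ge 0$ while keeping $\gamma>0$ in the interior and $\gamma=0$ at $\theta_1=-\pi/2$ and $\theta_2=\pi/4$. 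Finally $\varphi_2$: with $\beta\equiv-\frac12$ it becomes $\varphi_2(\theta)=-2\rho -\bigl((4n+2)\sin^2\theta+2\cos^2\theta\bigr)/2 + \tfrac{\gamma'}{\gamma}(\theta)$, i.e.\ $-2\rho-(5\sin^2\theta+\cos^2\theta)+\gamma'/\gamma$. The terms $-(5\sin^2\theta+\cos^2\theta)$ are bounded below by $-5$, and on each branch $\gamma'/\gamma$ must be shown bounded below on the relevant half-open interval (it blows up to $+\infty$, not $-\infty$, at the endpoints where $\gamma\to 0$, so that is fine). Collecting the bounds, one arrives at $\varphi_2(\theta)\ge -(3+2\rho)$, whence $\mathcal L_\rho g\ge -(3+2\rho)g$ on $]-\pi/2,\pi/4[\setminus\{-\pi/4\}$. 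I expect the main obstacle to be the bookkeeping in checking $\varphi_3\ge 0$ and pinning down the exact lower bound of $\varphi_2$ so that the constant comes out to be precisely $3+2\rho$; these are the computations the authors defer, and one has to be careful about the matching at $\theta=-\pi/4$ (where $g$ is only $\mathcal C^1$, not $\mathcal C^2$, so the inequality is asserted away from that angle but $g$ still lies in the domain of the generator).

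\textbf{Part (ii): from the differential inequality to the persistence bound.} Here I would apply Proposition \ref{prodynkin} with $\mathcal S$ the open cone $\{(r\cos\theta,r\sin\theta):r>0,\ \theta\in]-\pi/2,\pi/4[\}$, the function $f=g$ from part (i), and $\lambda=3+2\rho$. One must check the hypotheses of Proposition \ref{prodynkin}: $g$ is bounded (since $\beta\equiv-\tfrac12<0$ kills the $e^{\beta r^2}$ factor against the polynomial $r^2\gamma(\theta)$, so $g$ is bounded on $\overline{\mathcal S}$), $g>0$ on $\mathcal S$ and $g=0$ on $\partial\mathcal S$ (from $\gamma(\theta_1)=\gamma(\theta_2)=0$ and $\gamma>0$ inside), and $\mathcal L_\rho g\ge -\lambda g$ on $\mathcal S$ from part (i). The remark in the proof that $g$ is $\mathcal C^1$ and piecewise $\mathcal C^2$, hence in the domain of the generator and admissible for Dynkin/It\^o (as already used in the elliptic case), covers the single non-smooth angle. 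Proposition \ref{prodynkin} then gives $\limsup_{t\to\infty}-\tfrac1t\log\PE_{m_0}(\tau_{\mathcal S}\ge t)\le 3+2\rho$ for any $m_0$ supported in $\mathcal S$. To get the statement for a half-plane $H\supset\mathcal S$ and $m_0$ with $m_0(H)=1$, I would note that enlarging the domain only makes the exit time larger, so $\tau_H\ge\tau_{\mathcal S}$ pathwise whenever the process starts in $\mathcal S$; more carefully, since $m_0(H)=1$ but possibly $m_0(\mathcal S)<1$, one uses the Markov property after a short time (the process, being a nondegenerate Gaussian diffusion in the reduced coordinates of \eqref{eq:hypocoercif}, enters $\mathcal S$ with positive probability and one conditions on that) — or, more simply, one observes that $g$ (suitably understood, extended by $0$ outside $\mathcal S$) still satisfies the Dynkin inequality on the larger set in the distributional/viscosity sense needed, so Proposition \ref{prodynkin} applies directly with $\mathcal S$ replaced by nothing more than the requirement that $f$ vanish on $\partial H\cap\{f>0\}$'s closure. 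Either way the conclusion $\limsup_{t\to\infty}-\tfrac1t\log\PE_{m_0}(\tau_H\ge t)\le 3+2\rho$ follows. This step is routine; the only subtlety worth a line is the passage from $m_0(\mathcal S)=1$ to $m_0(H)=1$, which I would handle by the positive-probability-of-entering-$\mathcal S$ argument together with $\{\tau_H=\infty\}\supseteq\{$enter $\mathcal S$, then stay in $\mathcal S$ forever$\}$ having the same exponential rate.

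\textbf{Back to Theorem \ref{T1}.} Finally, translating back through Corollary \ref{cor:trmod}: the reduced process is $(U,V)$ solving \eqref{eq:hypocoercif} with $\rho=a/(2\omega)$ and time rescaled by $1/\omega$, and the half-plane $\{x>0\}$ for $Z$ corresponds to a half-plane $H$ for $\hat Z$ containing the cone $\mathcal S$ precisely when $a\le b$ up to the stated numerical factor (this is where the hypothesis $(1+\tfrac1{\sqrt2})a\le b$ enters: it guarantees the transformed half-plane $\{v>\tfrac{2b-a}{2\omega}u\}$ contains $\mathcal S=\{\theta\in]-\pi/2,\pi/4[\}$, i.e.\ that the boundary slope $\arctan\tfrac{2b-a}{2\omega}$ is at most $\pi/4$ away appropriately). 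Undoing the time rescaling multiplies the rate by $\omega$, giving $\limsup_t-\tfrac1t\log\PE(\tau>t)\le(3+2\rho)\omega = 3\omega + a \le 4\omega$ under $a\le\omega$, which holds in the same regime; hence $\PE_{(x_0,y_0)}(\tau>t)\ge\epsilon(x_0,y_0)e^{-4\omega t}$, and the measure version follows from part (ii) applied with the image of $m_0$. I would present Part (i) as the technical heart and flag the $\varphi_2$ lower bound as the one computation requiring genuine care.
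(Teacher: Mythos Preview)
Your overall strategy for both parts is the paper's, but in part~(i) you make a computational slip that actually breaks the argument. When you substitute $\beta\equiv -\tfrac12$, $n=2$ into $\varphi_2$ you drop the factor $(1+2\beta'\cos^2\theta+4\beta\sin\theta\cos\theta)$ multiplying $\gamma'/\gamma$; with $\beta=-\tfrac12$ this factor is $1-\sin(2\theta)$, not $1$. The correct expression is
\[
\varphi_2(\theta)=-2\rho-(5\sin^2\theta+\cos^2\theta)+(1-\sin(2\theta))\frac{\gamma'}{\gamma}(\theta).
\]
This omission matters: on $[-\pi/4,\pi/4)$ you yourself compute $\gamma'/\gamma=-2\tan(\pi/4+\theta)$, which tends to $-\infty$ as $\theta\to\pi/4^-$, so with your formula $\varphi_2$ would \emph{not} be bounded below and the inequality ${\cal L}_\rho g\ge -(3+2\rho)g$ would fail. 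The missing factor $1-\sin(2\theta)$ vanishes precisely at $\theta=\pi/4$ and kills this divergence: one gets $(1-\sin(2\theta))\gamma'/\gamma=-2\cos(2\theta)$, and then $\varphi_2$ collapses to the constant $-(3+2\rho)$ on $(-\pi/4,\pi/4)$, while on $(-\pi/2,-\pi/4)$ it equals $-(3+2\rho)+2\cot(2\theta)\ge -(3+2\rho)$. (Your claim that $\gamma'/\gamma\to+\infty$ at both endpoints is also wrong for the same reason, and incidentally the sign on the other branch is $\gamma'/\gamma=+2\cot(2\theta)$, not $-2\cot(2\theta)$.) So the ``bookkeeping'' you flag as the main obstacle is not just tedious: it hinges on a cancellation you have inadvertently thrown away.

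For part~(ii) your plan matches the paper's: apply Proposition~\ref{prodynkin} on the cone $\mathcal S$, then pass from $m_0(\mathcal S)=1$ to $m_0(H)=1$ by showing the process enters $\mathcal S$ before leaving $H$ with positive probability and using the Markov property. The paper makes this last step rigorous via the Support Theorem and an explicit construction of controlled trajectories (Lemma~\ref{lem:trajcontrol}); your ``extend $g$ by zero and hope the Dynkin inequality survives on $H$'' alternative is not safe, since the extension is not $\mathcal C^1$ across $\partial\mathcal S$ and need not lie in the domain of ${\cal L}_\rho$.
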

\begin{rem} The vector field corresponding to the drift part of the stochastic evolution under study, as well as the most favorable positions  (which are expected to be the points where $g$ is large) for the starting point in order to keep the process in ${\cal S}$ for large times are illustrated in Figure~\ref{fig:vector2}. As pointed out above, the angular sector $[\pi/4,\pi/2]$ is now avoided to keep the process in the half-plane $x>0$. Moreover, the right side of Figure \ref{fig:vector2} shows that  excessive values of $r$ (too large or too small ones) are also prohibited: small values are unfavourable since it corresponds to starting positions very close to the origin (and naturally close to the axis $x=0$). Large values of $r$ are also disadvantageous owing to the large norm of the drift vector field against which the Brownian motion has to fight to keep the process in ${\cal S}$.

\begin{figure}[h]
 \centering
\includegraphics[height=6cm]{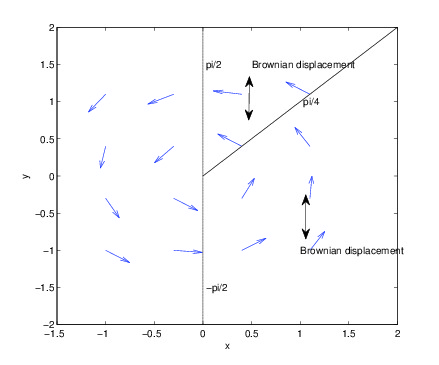}
\includegraphics[height=6cm]{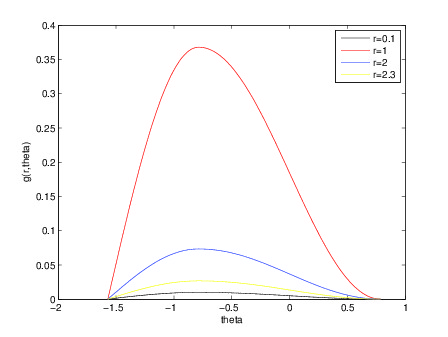}
\caption{\label{fig:vector2} Left: the domain to avoid is $\theta \in \lt[\pi/4,3\pi/2\rt]$. The hypo-elliptic situation is illustrated by the rank 1 double arrow: the Brownian motion can only move in vertical directions. In blue: rotation + homothety vector field. Right: function $\theta \mapsto g(r,\theta)$ for several values of $r$.}
\end{figure}

\end{rem}  
\begin{proof} With the proposed choices of $n$ and ${\gamma}$, one checks that 
$$\varphi_3(\theta)=\begin{cases} 0&\textnormal{if }\theta\in[-\frac{\pi}{2},-\frac{\pi}{4})\\
\frac{2}{1-\sin(2\theta)}&\textnormal{if }\theta\in(-\frac{\pi}{4},\frac{\pi}{4})
\end{cases}
$$
so that $\varphi_3$ is non-negative. Since $\beta$ is constant and $\rho$ is non-negative, the fact that 
$\varphi_1$ is non-negative is obvious. Thus, it remains to focus on $\varphi_2$.
In fact, easy computations show that $\varphi_2(\theta)=-(3+2\rho)$ on $[-\frac{\pi}{4},\frac{\pi}{4})$ whereas
$$\forall \theta\in(-\frac{\pi}{2},-\frac{\pi}{4}],\quad\varphi_2(\theta)=-(3+2\rho)+\frac{2}{\tan(2\theta)}.$$
The conclusion of the first assertion follows.

\smallskip

\noindent (ii) {By Proposition \ref{prodynkin} and what precedes, for any probability measure ${m_{\cal S}}$ on $\ER^2$ such that ${m_{\cal S}}({\cal S})=1$,
\begin{equation}\label{fopedr}
\limsup_{t\rightarrow+\infty}-\frac{1}{t}\log\left(\PE_{m_{\cal S}}(\tau_{\cal S}\ge t)\right)\le 3+2\rho.
\end{equation}
Now, consider the general case. Let ${m_0}$ be a probability such that ${m_0}(H)=1$. 
Then, for every $t>0$, for every $a.s.$ finite stopping time $T$,
$$\PE_{m_0}(\tau_H\ge t)\ge \PE_{m_0}(\tau_H\ge T+t)\ge  \PE_{m_0}( \tau_H> T, Z_s\in {\cal S} \;\forall s\in[T,T+t] ).$$
Thus,
$$\PE_{{m_0}}(\tau_H\ge t)\ge \ES_{m_0}\left[1_{\{\tau_H> T,Z_T\in {\cal S}\}}\PE(Z_{s+T}\in {\cal S},\forall s\in[0,t]|{\cal F}_T)\right].$$
and  it follows from the Markov property that
$$\PE_{m_0}(\tau_H\ge t)\ge \ES_{m_0} [1_{\{\tau_H> T,Z_T\in {\cal S}}\PE_{Z_T}(\tau_{\cal S}\ge t)].$$
If we assume for a moment that $T$ is such that
\begin{equation}\label{eq:mupos}
\PE_{m_0}(\tau_H> T,Z_T\in {\cal S})>0,
\end{equation}
then,
$$-\frac{1}{t}\log(\PE_{m_0}(\tau_H\ge t))\le -\frac{1}{t}\log(\PE_{m_0}(\tau_H> T,Z_T\in {\cal S}))-
 \frac{1}{t}\log\left(\PE_{{m_{\cal S}}}(\tau_{\cal S}\ge t)\right),$$
where ${m_{\cal S}}$ is the probability measure defined for every bounded measurable function $h:\ER^2\rightarrow\ER$ by
 $${m_{\cal S}}(h)=\frac{1}{\PE_{m_0}(\tau_H> T,Z_T\in {\cal S})}\ES_{m_0}[h(Z_T)1_{\{\tau_H> T,Z_T\in {\cal S}\}}].$$
By \eqref{fopedr} and the (strict) positivity of $\PE_{m_0}(\tau_H> T,Z_T\in {\cal S})$, we obtain that
$$\limsup_{t\rightarrow+\infty}-\frac{1}{t}\log(\PE_{m_0}(\tau_H\ge t))\le 3+2\rho.$$
Thus, it remains to prove \eqref{eq:mupos}. It is certainly enough to show that for every $(x_0,y_0)\in H$, there exists a deterministic positive $T(x,y)$ such
that
\begin{equation*}
\PE_{(x_0,y_0)}(\tau_H> T(x_0,y_0),Z_{T(x_0,y_0)}\in {\cal S})>0.
\end{equation*}
  
\smallskip
The idea is to build some ``good'' controlled trajectories: let $\varphi\in L^{2,{\rm loc}}(\ER_+,\ER)$ and denote by 
$(z_\varphi(t))_{t\ge0}$ the solution of the controlled system
\begin{equation*}
\begin{cases} \dot{x}(t)=-\rho x(t)-y(t)\\
\dot{y}(t)=-\rho y(t)+x(t)+\varphi(t)
\end{cases}
\end{equation*}
starting from $z_0=(x_0,y_0)\in{ H}$. The classical Support Theorem (see \cite{StroockVaradhan_support}) can be applied since the coefficients of the diffusion are Lipschitz continuous. This implies that \eqref{eq:mupos} is true as soon as there exists such a $\varphi$ for which the solution $(z_\varphi(t))_{t\in[0,T(x_0,y_0)]}$ belongs to $H$ and such that $z_\varphi(T(x_0,y_0))$ belongs to ${\cal S}$. Such a controlled trajectory can be built through the following lemma.}

 \wwtbp
\end{proof}

\begin{lem}\label{lem:trajcontrol} Let $\kappa\in(0,+\infty]$ and set $H_\kappa=\{(x,y),y<\kappa x\}$ and $H_\infty=D$ ($=\{(x,y),x>0\}$).

\smallskip 
(i) Let $(x_0,y_0)\in H_\kappa$ with $y_0\ge 0$. Then, for every $v\in(-\infty,y_0]$, there exists a controlled trajectory $(x_\varphi(t),y_\varphi(t))_{t\ge0}$ starting from $(x_0,y_0)$ and a positive $T_v$ such that $\{z_\varphi(t)\st t\ge0\}\subset H_\kappa \cap H_\infty$,   $x_\varphi(T_v)>0$ and $y_\varphi(T_v)=v$.

\smallskip
(ii) Let $(x_0,y_0)\in H_\kappa$ with $y_0\le 0$ and consider $(x(t),y(t))_{t\ge0}$ the solution to the free dynamical system ($i.e.$ the controlled trajectory with $\varphi\equiv0$) starting from $(x_0,y_0)$.
Then, there exists $T>0$ such that $(x(t),y(t))_{t\in[0,T]}\subset H_\kappa$ and such that $(x(T),y(T))=(a_T,0)$ with $a_T>0$. Furthermore,
writing $(x_0,y_0)=(r_0\cos(-\theta_0),r_0\sin (-\theta_0))$ (with $r_0>0$ and $\theta_0\in(\pi-{\rm{Arctan}}(\kappa),0]$), this property holds with $T=\theta_0$ and $a_T=r_0e^{-\rho \theta_0}$.
 
\end{lem}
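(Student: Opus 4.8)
The plan rests on one structural observation: in the controlled system the control $\varphi$ enters only the equation for $\dot y$. Consequently, prescribing an arbitrary piecewise $\mathcal{C}^1$ profile $t\mapsto y(t)$ with $y(0)=y_0$, and then letting $x$ be the (global) solution of $\dot x=-\rho x-y$ with $x(0)=x_0$, produces an admissible controlled trajectory, the associated control being $\varphi=\dot y+\rho y-x\in L^{2,\mathrm{loc}}(\ER_+,\ER)$. So both parts reduce to choosing a suitable $y$-profile (for (i)) or to analysing the free flow (for (ii)).

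\textbf{Part (i).} Note first that $(x_0,y_0)\in H_\kappa$ together with $y_0\ge0$ forces $\kappa x_0>y_0\ge0$, hence $x_0>0$, so the initial point lies in the \emph{open} region $H_\kappa\cap H_\infty=\{x>0,\ y<\kappa x\}$, and the requirement $x_\varphi(T_v)>0$ is automatic once the trajectory stays in $H_\infty$. I would use a two-stage profile. On a short interval $[0,\delta]$ let $y$ decrease affinely and monotonically from $y_0$ to a constant $-N$ chosen large enough that $-N<v$; then $y$ takes the value $v$ at some instant $T_v\in(0,\delta)$ (if $v=y_0$, first perform an infinitesimal dip and come back to $y_0$ so as to make $T_v>0$). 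Since $|\dot x|\le\rho|x|+|y|$ with $|y|\le\max(|y_0|,N)$ on $[0,\delta]$, Gronwall's lemma gives $\sup_{[0,\delta]}|x(t)-x_0|\le C\delta$ for $\delta\le1$ with $C$ independent of $\delta$; hence for $\delta$ small $x$ stays positive on $[0,\delta]$, and, using $y(t)\le y_0$ there, $\kappa x(t)-y(t)\ge(\kappa x_0-y_0)-\kappa C\delta>0$, so the whole trajectory stays in $H_\kappa\cap H_\infty$ on $[0,\delta]$. On $[\delta,+\infty)$ I would then freeze $y\equiv-N$: the scalar equation $\dot x=-\rho x+N$ drives $x$ monotonically toward the positive equilibrium $N/\rho$ (or, when $\rho=0$, $x$ grows linearly), so $x(t)>0$ for all $t$ while $y=-N<0<\kappa x$. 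Therefore the trajectory lies in $H_\kappa\cap H_\infty$ for every $t\ge0$, with $T_v>0$, $y_\varphi(T_v)=v$ and $x_\varphi(T_v)>0$.

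\textbf{Part (ii).} Here $\varphi\equiv0$, so $\dot z=(-\rho I_2+J_2)z$, which in polar coordinates decouples as $\dot r=-\rho r$, $\dot\theta=1$; thus $r(t)=r_0e^{-\rho t}$ and the angle increases at unit speed. Writing $(x_0,y_0)=(r_0\cos(-\theta_0),r_0\sin(-\theta_0))$ with $r_0>0$, the hypothesis $y_0\le0$ gives $\theta_0\ge0$, while $(x_0,y_0)\in H_\kappa$ is equivalent to $-\theta_0$ lying in the open angular sector $(\mathrm{Arctan}(\kappa)-\pi,\mathrm{Arctan}(\kappa))$ that describes $H_\kappa$ (with the convention $\mathrm{Arctan}(+\infty)=\pi/2$ when $\kappa=+\infty$), i.e.\ $\theta_0<\pi-\mathrm{Arctan}(\kappa)$. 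Take $T=\theta_0$ (which is positive when $y_0<0$). At time $T$ the angle equals $-\theta_0+T=0$, so $(x(T),y(T))=(r_0e^{-\rho\theta_0},0)=(a_T,0)$ with $a_T>0$; and for $t\in[0,T]$ the angle $-\theta_0+t$ ranges over $[-\theta_0,0]\subset(\mathrm{Arctan}(\kappa)-\pi,\mathrm{Arctan}(\kappa))$ because $-\theta_0>\mathrm{Arctan}(\kappa)-\pi$ and $0<\mathrm{Arctan}(\kappa)$ (here $\kappa>0$). Hence $(x(t),y(t))\in H_\kappa$ throughout $[0,T]$, which is the claim.

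The lemma is essentially a controllability statement, and the one point needing care is the ``$t\ge0$'' containment in (i): one cannot merely switch the control off after $T_v$, since the free flow spirals out of the region, so a stabilising final stage is necessary — freezing $y$ at a negative value turns the $x$-equation into a scalar linear equation with an attracting positive equilibrium. In (ii) the only subtlety is to align the strict inequalities defining the open half-plane $H_\kappa$ with the range of the angular variable.
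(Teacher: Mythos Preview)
Your proof is correct. Part (ii) is essentially identical to the paper's argument: both write the free solution in polar coordinates as $\dot r=-\rho r$, $\dot\theta=1$ and read off $T=\theta_0$, $a_T=r_0e^{-\rho\theta_0}$.

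For part (i), your route differs from the paper's. You engineer a two-stage $y$-profile: a fast affine descent from $y_0$ to $-N$ on a short interval $[0,\delta]$, controlled via a Gronwall bound on $|x(t)-x_0|$, followed by $y$ frozen at $-N$ so that the scalar equation $\dot x=-\rho x+N$ keeps $x$ trapped between $x(\delta)>0$ and $N/\rho>0$ (or growing linearly when $\rho=0$). The paper instead uses a single stage: it prescribes $\dot y_M=-M$ for all $t\ge0$, solves the resulting linear ODE for $x_M$ explicitly, and shows that the global minimum of $x_M$ (attained at an explicit time $t_M^*$) tends to $x_0$ as $M\to+\infty$, so that for $M$ large enough both $x_M>0$ and $\kappa x_M-y_M>0$ hold on all of $\RR_+$. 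Your approach avoids the explicit integration and is arguably more robust to perturbations of the model; the paper's is more direct in that a single parameter does all the work. Both correctly address the ``$\{z_\varphi(t):t\ge0\}$'' containment that you rightly flag as the point needing care.

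One minor remark: your handling of the boundary case $v=y_0$ (dip and return so that $T_v>0$) is fine but informal; the paper's proof has the same gap, since its strictly decreasing $y_M$ hits $y_0$ only at time $0$.
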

\begin{rem} Note that this lemma will be also used in the proof of Proposition \ref{lowerboundprob} (see Step 3). This is the reason why its statements are a little sharper than what we need for the proof of the previous proposition.
\end{rem}
\begin{proof}
(i) Without loss of generality, we only prove the result when $\kappa<+\infty$. The idea is to build $\varphi$ such that the derivative of the second component is large enough. More precisely, for every $M>0$, 
\begin{equation*}
\begin{cases}
\dot{x}_M(t)=-\rho x_M(t)-y_M(t)\\
\dot{y}_M(t)=-M
\end{cases}
\end{equation*}
is certainly an equation of a controlled trajectory (by setting $\varphi(t)=-M+\rho y_M(t)+x_M(t)$). Furthermore, denoting by $z_0=(x_0,y_0)$ its starting point, we have
$$ y_M(t)=-Mt+y_0\quad\textnormal{and}\quad x_M(t)=\left(x_0+\frac{M}{\rho^2}+\frac{y_0}{\rho}\right) e^{-\rho t}+\frac{M}{\rho}t-\frac{M}{\rho^2}-\frac{y_0}{\rho}.$$
First, let us choose $M$ large enough in order that for all $t\ge0$, $x_M(t)>0$  and $(x_M(t),y_M(t))\in H_\kappa$, $i.e.$ such that 
$x_M(t)>0$ and $\kappa x_M(t)-y_M(t)\ge0$ for all $t\ge0$. A simple study of the derivative of $t\rightarrow x_M(t)$ yields
\begin{align*}
&\forall t\ge 0,\quad x_M(t)\ge x_M(t_M^*)\quad\textnormal{with}\quad t_M^*=\frac{1}{\rho}\log\lt(1+ \frac{\rho}{M}(y_0+{\rho x_0})\rt)\quad\textnormal{and}\\
& x_M(t_M^*)=\frac{M}{\rho^2}\log\lt(1+\frac{\rho}{M}(y_0+{\rho x_0})\rt)-\frac{y_0}{\rho}\xrightarrow{M\rightarrow+\infty} x_0.
 \end{align*}
Thus, for every $\varepsilon>0$, there exists $M_\varepsilon$ large enough such that $x(t^*_{M_\varepsilon})\ge \kappa x_0-\varepsilon$. Using that for any  $M>0$ and $t\ge0$, $y_M(t)\le y_0$ and setting $\varepsilon=\frac{\kappa x_0 -y_0}{2}$, we obtain that 
 $$\forall t\ge0,\quad x_{M_\varepsilon}(t)>0\quad\textnormal{and}\quad \kappa x_{M_\varepsilon}(t)-y_{M_\varepsilon}(t) >0.$$
Since $y_{M_\varepsilon}$ is a continuous function such that $y_{M_\varepsilon}(t)\rightarrow-\infty$ as $t\rightarrow+\infty$, it  follows that for every $v\in(-\infty,y_0]$, there exists $T_v>0$ such that $y_{M_\varepsilon}(T_v)=v$.

\noindent
\smallskip
(ii)  The result is obvious since the solution to the free dynamical system satisfies 
$$(x(t),y(t))=r_0e^{-\rho t}(\cos(t-\theta_0),\sin(t-\theta_0)),\quad t\ge0.$$
%
 \wwtbp
\end{proof}

\smallskip
We are now able to prove the lower-bound of Theorem \ref{T1}.
\begin{pro}\label{pro:low2} Let $(Z_t)_{t\ge0}$ be a solution of \eqref{eds2} with {$(1+\frac{1}{\sqrt{2}})a\le b$} and let $\tau=\inf\{t>0, X_t=0\}$. Then, for every probability measure $m_0$ on $\ER^2$ such that $m_0(\{(x,y),x>0\})=1$,
$$\limsup_{t\rightarrow+\infty}-\frac{1}{t}\log(\PE_{m_0}(\tau\ge t))\le \lt(3 +\frac{a}{\omega}\rt)\omega.$$
\end{pro}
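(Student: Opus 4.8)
The idea is to reduce the statement, via Corollary~\ref{cor:trmod}, to a question about the normalized process $(\hat Z_t)=(U_t,V_t)$ solving \eqref{eq:hypocoercif}, for which the parameter $\rho=a/(2\omega)$ is non-negative (here $\rho\ge 0$ is automatic since $a,\omega>0$). Under this change of variables the half-plane $D=\{(x,y)\st x>0\}$ for $Z$ is mapped to the half-plane $H=\{(u,v)\st v>\frac{2b-a}{2\omega}u\}$, as noted in the remark following Corollary~\ref{cor:trmod}, and time is rescaled by the factor $\omega$. Consequently $\PE_{m_0}(\tau\ge t)=\PE_{\hat m_0}(\hat\tau_H\ge \omega t)$ where $\hat m_0$ is the push-forward of $m_0$, which still charges $H$ with full mass, and $\hat\tau_H$ is the exit time of $H$ for $\hat Z$. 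Therefore
$$\limsup_{t\to+\infty}-\tfrac1t\log(\PE_{m_0}(\tau\ge t))=\omega\cdot\limsup_{s\to+\infty}-\tfrac1s\log(\PE_{\hat m_0}(\hat\tau_H\ge s)).$$

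Next I would invoke Proposition~\ref{prophyplow}(ii) directly. That proposition establishes, for the evolution \eqref{eq:hypocoercif} with parameter $\rho\ge 0$, that for \emph{any} open half-plane $H$ containing the reduced angular sector ${\cal S}=\{(r\cos\theta,r\sin\theta)\st r>0,\ \theta\in]-\tfrac\pi2,\tfrac\pi4[\}$ and any probability measure $\hat m_0$ with $\hat m_0(H)=1$, one has $\limsup_{s\to+\infty}-\tfrac1s\log(\PE_{\hat m_0}(\hat\tau_H\ge s))\le 3+2\rho$. The only thing that has to be checked is the geometric inclusion ${\cal S}\subset H$, i.e.\ that the half-plane $H=\{v>\frac{2b-a}{2\omega}u\}$ really does contain the angular sector of opening from $-\pi/2$ to $\pi/4$. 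This is precisely where the hypothesis $(1+\tfrac1{\sqrt2})a\le b$ enters: the boundary line of $H$ has slope $\frac{2b-a}{2\omega}$, whose associated angle must be compared with $\pi/4$ (equivalently, one needs $\frac{2b-a}{2\omega}\ge 1$, i.e.\ $2b-a\ge 2\omega=\sqrt{4ab-a^2}$, which after squaring and simplifying is exactly $b\ge(1+\tfrac1{\sqrt2})a$). I would carry out this elementary verification: squaring $2b-a\ge\sqrt{4ab-a^2}$ (both sides non-negative under the hypothesis) gives $4b^2-4ab+a^2\ge 4ab-a^2$, i.e.\ $4b^2-8ab+2a^2\ge0$, i.e.\ $2b^2-4ab+a^2\ge0$, whose larger root in $b$ is $b=a(1+\tfrac1{\sqrt2})$; hence the condition is equivalent to $b\ge(1+\tfrac1{\sqrt2})a$, and under it the lower boundary ray of $H$ lies at angle $\ge\pi/4$, so ${\cal S}\subset H$.

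Combining the two steps: under $(1+\tfrac1{\sqrt2})a\le b$ we get ${\cal S}\subset H$, Proposition~\ref{prophyplow}(ii) applies with $\rho=a/(2\omega)$, and therefore $\limsup_{s\to+\infty}-\tfrac1s\log(\PE_{\hat m_0}(\hat\tau_H\ge s))\le 3+2\rho=3+\tfrac{a}\omega$. Multiplying back by $\omega$ from the time rescaling yields
$$\limsup_{t\to+\infty}-\tfrac1t\log(\PE_{m_0}(\tau\ge t))\le \Bigl(3+\tfrac{a}\omega\Bigr)\omega,$$
which is the claim. I expect the main (and essentially only) obstacle to be the geometric inclusion ${\cal S}\subset H$ and identifying that it forces exactly the stated constraint on $a,b$; the rest is bookkeeping of the linear/time change of variables from Corollary~\ref{cor:trmod}, together with the care, already handled in Proposition~\ref{prophyplow}(ii), that $g$ is only piecewise $\mathcal C^2$ but still in the domain of the generator so that Proposition~\ref{prodynkin} applies.
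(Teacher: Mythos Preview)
Your overall strategy is right, but the crucial geometric step is in error: the inclusion ${\cal S}\subset H$ you assert is \emph{false}. The half-plane obtained from $D=\{x>0\}$ via Corollary~\ref{cor:trmod} is $H=\{(u,v)\st v>\kappa u\}$ with $\kappa=\frac{2b-a}{2\omega}\ge 1$, i.e.\ the region \emph{above} the line $v=\kappa u$. But ${\cal S}=\{(u,v)\st u>0,\ v<u\}$ lies \emph{below} the line $v=u$ in the right half-plane. For instance $(1,0)\in{\cal S}$ while $(1,0)\notin H$ since $0\not>\kappa$. So Proposition~\ref{prophyplow}(ii) cannot be applied to this $H$ as stated.

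The paper repairs exactly this point by inserting a symmetry step before the change of variables: since the SDE \eqref{eds2} is invariant under $(Z,B)\mapsto(-Z,-B)$, one has $\PE_{z_0}(\tau\ge t)=\PE_{-z_0}(\tau_{D_-}\ge t)$ with $D_-=\{x<0\}$. Under the transformation of Corollary~\ref{cor:trmod}, $D_-$ becomes the half-plane $H_\kappa=\{\tilde y<\kappa\tilde x\}$ (below the line), and now the inclusion ${\cal S}\subset H_\kappa$ is immediate when $\kappa\ge 1$: for $\tilde x>0$ and $\tilde y<\tilde x$ one has $\tilde y<\tilde x\le\kappa\tilde x$. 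Your computation that $\kappa\ge1$ is equivalent to $b\ge(1+\tfrac1{\sqrt2})a$ is correct and is exactly the role of the hypothesis. Equivalently, you could keep your $H$ and instead invoke the symmetry $(U,V,W)\mapsto(-U,-V,-W)$ of \eqref{eq:hypocoercif}, noting that $-{\cal S}\subset H$ when $\kappa\ge1$; but some such reflection is unavoidable.
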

\begin{rem}Since $\frac{a}{\omega}=(\frac{b}{a}-\frac{1}{4})^{-\frac{1}{2}}$, 
$$\sup_{(a,b),0<(1+\frac{1}{\sqrt{2}})a\le b} \lt(3 +\frac{a}{\omega}\rt)= 3+(\frac{3}{4}+\frac{1}{\sqrt{2}})^{-\frac{1}{2}}\le 4.$$
This corresponds to the bound given in Theorem \ref{T1}. However, the reader can remark that the above result yields some sharper bounds. In particular,  when $a$ tends to $0$, $3+a/\omega$ tends to $3$. 
\end{rem}
\begin{proof} 
Let $z_0=(x_0,y_0)\in\ER^2$ such that $x_0>0$. Owing to the symmetry of the Brownian motion, one can check that 
$$\PE_{z_0}(\tau\ge t)=\PE_{-z_0}(\tau_{{D}_{-}}\ge t)$$
where $z_0=(x_0,y_0)^*$, ${D}_{-}=\{(x,y),x<0\}$ and $\tau_{{D}_{-}}=\inf\{t\ge0, Z_t\in {D}_{-}^c\}$.\\
Second, set  $v=(\frac{1}{b^2}(\frac{a}{2}-b),1)^*$ and $P_v=(v,B v)$ with $B=\frac{1}{\omega}(A+\frac{a}{2} I_2)$.
By Corollary \ref{cor:trmod}, there exists $\alpha>0$ such that $(\tilde{Z}_t)_{t\ge0}:=({\sqrt{\omega}\alpha} P_v^{-1}Z_{\frac{t}{\omega}})_{t\ge0}$ is a solution of 
\eqref{eq:hypocoercif}.  Denote respectively by $(x,y)$ and by $(\tilde{x},\tilde{y})$, the coordinates in the canonical basis and in the basis $\tilde{\cal B}=(v,B v)$. Computing $P_v (\tilde{x},\tilde{y})^*$, one checks that in the new basis, the set $D_{-}$ corresponds to the half-plane $H_\kappa$ defined by
$$H_\kappa=\{(\tilde{x},\tilde{y}), \tilde{y}<\kappa \tilde{x}\}\quad\textnormal{with}\quad \kappa=\frac{1}{\omega}\lt(b-\frac{a}{2}\rt).$$
Furthermore, from the very definition of $(\tilde{Z}_t)_{t\ge0}$, we have $$\tau_{D_{-}}^{-z_0}=\tilde{\tau}_{H_\kappa}^{-\tilde{z_0}}\quad\textnormal{with $\tilde{\tau}_{H_\kappa}=\inf\{t\ge 0, \tilde{Z}_t\in H_\kappa^c\}$ and $\tilde{z}_0={\sqrt{\omega}\alpha} P_v^{-1}z_0$.}$$
In particular, $\PE_{-z_0}(\tau_{{D}_{-}}\ge t)=\PE_{-\tilde{z}_0}(\tilde{\tau}_{H_\kappa}\ge \omega t)$ so that for any probability $m_0$ on $\ER^2$ such that $m_0(\{(x,y),x>0\})=1$,
 $$\PE_{m_0}(\tau_{{D}_{-}}\ge t)=\PE_{\tilde{m}_0}(\tilde{\tau}_{H_\kappa}\ge \omega t)$$
where $\tilde{m}_0:=m_0\circ (z\mapsto -{\sqrt{\omega}\alpha} P_v^{-1}z) $ satisfies $\tilde{m}_0 (H_\kappa)=1$.
\noindent Now, when  $(1+\frac{1}{\sqrt{2}})a\le b$, one checks that $\kappa\ge 1$ so that $H_\kappa$ contains the set   ${\cal S}=\{((\tilde{x},\tilde{y}), \tilde{x}>0,\tilde{y}<\tilde{x}\}$ of Proposition \ref{prophyplow} (written in polar coordinates). Applying the second item of this proposition  with $\rho=a/(2\omega)$, we finally obtain 
$$\limsup_{t\rightarrow+\infty}-\frac{1}{t}\log(\PE_{m_0}(\tau\ge t))=\omega \limsup_{t\rightarrow+\infty}-\frac{1}{\omega t}\log(\PE_{\tilde{m}_0}(\tilde{\tau}_{H_\kappa}\ge \omega t))\le \omega\lt(3+\frac{a}{\omega}\rt).$$
\wwtbp
\end{proof}
%

\section{Bridges at small times and persistence rate}\label{Eobast}
In this section, we study  the diffusion bridge associated to our dynamical system. We then use it to establish  some lower-bounds for $\PE_{(x_0,y_0)}(\tau\ge t)$ (where $\tau:=\inf\{t\ge0,Z_t \in\{(x,y),x<0\}\}$).
\subsection{Explosion of bridges at small times}  
Our objective here is to prove Theorem \ref{T2} and to discuss some related results.\par\me
Since we are mainly to  take advantage of the Gaussian features of the problem, we could have worked directly with
 the process $Z$ whose evolution
is given by (\ref{eds2}). 
Nevertheless the
computations presented in Subsection \ref{Gc} suggest that it is more advisable to first consider the simplifications made in Subsection \ref{sitvoT}. So we begin by considering the two-dimensional Ornstein-Uhlenbeck process $(Z_t)_{t\geq 0}\df (X_t,Y_t)_{t\geq 0}$
whose evolution is dictated by 
\bqn{catourne}
\lt\{
\begin{array}{rcl}
d X_t&=&(-\rho X_t-Y_t) \,dt\\
dY_t&= &(-\rho Y_t+X_t)\,dt+\sqrt{2}dW_t
\end{array}\rt.
\eqn
where $\rho\in\RR$ and $(W_t)_{t\geq 0}$ is a standard real Brownian motion.
Let us assume furthermore that the initial condition of $Z$ is a deterministic point $z_0=(x_0,y_0)^*\in\RR^2$.
The arguments of Subsection \ref{Gc} show that $Z$ is Gaussian and more precisely we have:
\begin{lem}\label{mtSt}
For any $t\geq 0$, $Z_t$ is distributed as a Gaussian law of mean $m_t(z_0)$
and variance $\Sigma_t$, with
\bq
m_t(z_0)&\df& 
\exp(-\rho t)\lt( \begin{array}{c}
x_0\cos(t)-y_0\sin(t)\\
x_0\sin(t)+ y_0\cos(t)\end{array}\rt)\\
\Sigma_t(1,1)& \df& \frac{1-e^{-2\rho t}}{2\rho}- \frac{e^{-2\rho t}}{2(1+\rho^2)}(\sin(2t)-\rho\cos(2t))-\frac{\rho}{2(1+\rho^2)} \\
\Sigma_t(1,2)\ =\ \Sigma_t(2,1)& \df&\frac{e^{-2\rho t}}{2(1+\rho^2)}(\cos(2t)+\rho\sin(2t))-\frac{1}{2(1+\rho^2)}\\
\Sigma_t(2,2)& \df& \frac{1-e^{-2\rho t}}{2\rho}+ \frac{e^{-2\rho t}}{2(1+\rho^2)}(\sin(2t)-\rho\cos(2t))+\frac{\rho}{2(1+\rho^2)}
\eq
\end{lem}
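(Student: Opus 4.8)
The plan is to rely once more on the variation-of-constants formula for linear Gaussian SDEs, exactly as in Subsection \ref{Gc}. Writing $Z_t=(X_t,Y_t)^*$, the system (\ref{catourne}) reads $dZ_t=\mathcal{A}Z_t\,dt+\mathcal{C}\,dW_t$ with $\mathcal{A}=-\rho I_2+J_2$ and $\mathcal{C}=(0,\sqrt{2})^*$. Applying It\^o's formula to $\exp(-\mathcal{A}t)Z_t$, as in the derivation of (\ref{Zt}), gives
$$Z_t=\exp(\mathcal{A}t)z_0+\int_0^t\exp(\mathcal{A}(t-s))\mathcal{C}\,dW_s,$$
so $Z_t$ is Gaussian with mean $m_t(z_0)=\exp(\mathcal{A}t)z_0$ and, by the It\^o isometry followed by the substitution $u=t-s$, covariance $\Sigma_t=\int_0^t\exp(\mathcal{A}u)\,\mathcal{C}\mathcal{C}^*\,\exp(\mathcal{A}^*u)\,du$.

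The computation then reduces to understanding the action of $\exp(\mathcal{A}u)$. Since $I_2$ and $J_2$ commute, $\exp(\mathcal{A}u)=e^{-\rho u}\exp(uJ_2)$, and $\exp(uJ_2)$ is the planar rotation of angle $u$, i.e. $\begin{pmatrix}\cos u&-\sin u\\\sin u&\cos u\end{pmatrix}$. Multiplying this matrix by $z_0=(x_0,y_0)^*$ immediately yields the stated formula for $m_t(z_0)$.

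For $\Sigma_t$, note that $\mathcal{C}\mathcal{C}^*=\mathrm{diag}(0,2)$, that $\mathcal{A}^*=-\rho I_2-J_2$, and that $\exp(uJ_2)^*=\exp(uJ_2)^{-1}=\exp(-uJ_2)$. A short matrix product then gives
$$\exp(\mathcal{A}u)\,\mathcal{C}\mathcal{C}^*\,\exp(\mathcal{A}^*u)=e^{-2\rho u}\exp(uJ_2)\,\mathrm{diag}(0,2)\,\exp(-uJ_2)=e^{-2\rho u}\begin{pmatrix}1-\cos 2u&-\sin 2u\\-\sin 2u&1+\cos 2u\end{pmatrix},$$
using $2\sin^2u=1-\cos 2u$, $2\cos^2u=1+\cos 2u$ and $2\sin u\cos u=\sin 2u$. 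It then remains to integrate over $[0,t]$ the three scalar functions $e^{-2\rho u}$, $e^{-2\rho u}\cos 2u$ and $e^{-2\rho u}\sin 2u$: the first is elementary and the last two are the classical primitives $\int e^{au}\cos bu\,du$ and $\int e^{au}\sin bu\,du$ evaluated at $a=-2\rho$, $b=2$ (so that $a^2+b^2=4(1+\rho^2)$). Collecting the four resulting entries produces the claimed expressions for $\Sigma_t(1,1)$, $\Sigma_t(1,2)=\Sigma_t(2,1)$ and $\Sigma_t(2,2)$.

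There is no genuine obstacle here: the whole lemma is a computation. The only points worth attention are the bookkeeping of the boundary terms at $u=0$ in the two trigonometric integrals, which are exactly what generates the additive constants $\mp\rho/(2(1+\rho^2))$ and $-1/(2(1+\rho^2))$, and the fact that the displayed formulas are written for $\rho\neq0$, the case $\rho=0$ being recovered by continuity with $(1-e^{-2\rho t})/(2\rho)$ replaced by $t$.
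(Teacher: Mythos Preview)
Your proof is correct and follows essentially the same route as the paper: variation of constants to identify mean and covariance, the factorization $\exp(\mathcal{A}u)=e^{-\rho u}\exp(uJ_2)$ to obtain the rotation matrix, the same double-angle rewriting of the integrand, and then termwise integration (the paper carries out the $\Sigma_t(1,1)$ integral via $\Re\int_0^t e^{2(i-\rho)s}ds$, which is exactly your ``classical primitives'' with $a=-2\rho$, $b=2$). Your closing remarks on the boundary terms and the $\rho=0$ limit are accurate additions.
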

\proof
Let us denote
\bq 
A \df\ \lt( \begin{array}{cc}
-\rho&-1\\
1& -\rho \end{array}\rt)
&\hbox{ and }&
C\ \df\ \lt( \begin{array}{c}
0\\
\sqrt{2} \end{array}\rt)
\eq
From the beginning of  Subsection \ref{Gc}, we get that for any $t\geq 0$, on one hand
\bq
m_t(z_0)&=&\exp(At)z_0\\
&=&\exp(-\rho t)\lt( \begin{array}{cc}
\cos(t)&-\sin(t)\\
\sin(t)& \cos(t) \end{array}\rt)\lt( \begin{array}{c}
x_0\\
y_0 \end{array}\rt)\eq
and on the other hand,
the validity of (\ref{Sigmat}).
We compute that for any $s\geq 0$,
\bq
\exp(As)CC^*\exp(A^*s)&=&2
\exp(-2\rho s)\lt( \begin{array}{cc}
\cos(s)&-\sin(s)\\
\sin(s)& \cos(s) \end{array}\rt)
\lt( \begin{array}{cc}
0&0\\
0&1 \end{array}\rt)
\lt( \begin{array}{cc}
\cos(s)&\sin(s)\\
-\sin(s)& \cos(s) \end{array}\rt)\\
&=&2\exp(-2\rho s)\lt( \begin{array}{cc}
\sin^2(s)&-\cos(s)\sin(s)\\
-\cos(s)\sin(s)& \cos^2(s) \end{array}\rt)\\
&=&
\exp(-2\rho s)\lt( \begin{array}{cc}
1-\cos(2s)&-\sin(2s)\\
-\sin(2s)& 1+\cos(2s) \end{array}\rt)\eq
The announced expressions for the entries of $\Sigma_t$ follow from
 immediate integrations. For instance for $\Sigma_t(1,1)$, we have
\bq
 \Sigma_t(1,1)&=&\int_0^t\exp(-2\rho s)(1-\cos(2s))\,ds\\
 &=&\frac{1-\exp(-2\rho t)}{2\rho}-\Re\lt(\int_0^t\exp(2(i-\rho)s)\,ds\rt)\\
 &=&\frac{1-\exp(-2\rho t)}{2\rho}-\Re\lt(\frac{\exp(2(i-\rho ) t)-1}{2(i-\rho)}\rt)\\
  &=&\frac{1-\exp(-2\rho t)}{2\rho}+\frac{1}{2(1+\rho^2)}\Re\lt((\rho+i)(\exp(2(i-\rho ) t)-1)\rt)\\
&=&\frac{1-\exp(-2\rho t)}{2\rho}+\frac{1}{2(1+\rho^2)}(\exp(-2\rho t)(\rho\cos(2t)-\sin(2t))-\rho)\eq
\wwtbp
For $t>0$, let us denote by $p_t(z_0,z)\, dz$ the law of $Z_t$ knowing that $Z_0=z_0$. With the notations of the above lemma we have
\bq
\fo t> 0,\,\fo z_0,z\in\RR^2,\qquad p_t(z_0,z)&=&\frac{1}{2\pi\det(\Sigma_t)}
\exp(-(z-m_t(z_0))^*(2\Sigma_t)^{-1}(z-m_t(z_0)))\eq
Using Bayes' formula, we get that for $0<t <T$ and $z_0, z_T\in\RR^2$, the law of $Z_t$ conditioned by $Z_T=z_T$ (and still by $Z_0=z_0$)
admits a density proportional to $z\mapsto p_t(z_0,z)p_{T-t}(z,z_T)$ where $z\in \mathbb{R}^2$.
It is a non-degenerate Gaussian law, let $\eta^{(T)}_t(z_0,z_T)$ (resp. $\sigma^{(T)}_t$) be its mean vector (resp. its covariance matrix), formula (\ref{sigmau}) below will show that the covariance matrix does not depend on $z_0$ and $z_T$.
We furthermore define 
\bq
\fo u\in[0,1],\qquad
\varphi_{z_0,z_T}(u)&\df&\lt( \begin{array}{c}
0\\
6u(1-u)(x_0-x_T) \end{array}\rt)
\eq
where $z_0\df (x_0,y_0)^*$ and $z_T\df (x_T,y_T)^*$.
The next result contains all the required technicalities we will need.
\begin{pro}\label{uuu}
For all $z_0,z_T\in\RR^2$ and $u\in(0,1)$, we have
\bq
\lim_{T\ri 0_+}T \eta^{(T)}_{uT}(z_0,z_T)&=&\varphi_{z_0,z_T}(u)\\
\lim_{T\ri 0_+} \sigma^{(T)}_{uT}(z_0,z_T)&=& 0\eq
\end{pro}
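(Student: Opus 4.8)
The plan is to rely purely on the Gaussian structure and to reduce the whole statement to the small-time asymptotics of $m_t$ and $\Sigma_t$ computed in Lemma~\ref{mtSt}. First I would write the bridge explicitly: with $A,C$ as in the proof of Lemma~\ref{mtSt} and $Z_t=\exp(At)z_0+\int_0^t\exp(A(t-s))C\,dB_s$, for $0<t<T$ the vectors $Z_t,Z_T$ are jointly Gaussian with $\mathrm{Cov}(Z_t,Z_T)=\Sigma_t\exp(A^*(T-t))$ by the It\^o isometry, so the usual Gaussian conditioning formulas give
$$
\eta^{(T)}_t(z_0,z_T)=m_t(z_0)+G^{(T)}_t\bigl(z_T-m_T(z_0)\bigr),\qquad
\sigma^{(T)}_t=\Sigma_t-B_t^{*}\Sigma_T^{-1}B_t,\quad B_t:=\exp(A(T-t))\Sigma_t,
$$
with $G^{(T)}_t:=\Sigma_t\exp(A^*(T-t))\Sigma_T^{-1}$; in particular $\sigma^{(T)}_t$ does not depend on $z_0$ or $z_T$. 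It then remains to fix $u\in(0,1)$, put $t=uT$, and let $T\to0_+$.

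The claim on $\sigma^{(T)}_{uT}$ is immediate: $\sigma^{(T)}_{uT}$ is a covariance matrix and the formula above writes it as $\Sigma_{uT}$ minus the symmetric non-negative matrix $B_{uT}^{*}\Sigma_T^{-1}B_{uT}$, hence $0\le \sigma^{(T)}_{uT}\le\Sigma_{uT}$ in the sense of quadratic forms; since every entry of $\Sigma_{uT}$ is $O(T)$ as $T\to0_+$ by Lemma~\ref{mtSt}, we get $\sigma^{(T)}_{uT}\to0$.

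For the mean, the only real difficulty is the behaviour of $G^{(T)}_{uT}$, because $\Sigma_t$ is strongly anisotropic near $0$ (its eigenvalues being of orders $t$ and $t^{3}$), so $\Sigma_T^{-1}$ explodes like $T^{-3}$ in one direction. Expanding the integrals of Lemma~\ref{mtSt} (the $\rho$-dependent corrections being of higher order), I would record the factorisation
$$
\Sigma_t=\Delta_t\,\bar\Sigma_t\,\Delta_t,\qquad \Delta_t:=\begin{pmatrix}t^{3/2}&0\\0&t^{1/2}\end{pmatrix},\qquad
\bar\Sigma_t\longrightarrow\bar\Sigma_0:=\begin{pmatrix}2/3&-1\\-1&2\end{pmatrix}\ \text{ as }t\to0_+,
$$
with $\bar\Sigma_0$ invertible, $\bar\Sigma_0^{-1}=\begin{pmatrix}6&3\\3&2\end{pmatrix}$. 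Writing $\Sigma_{uT}=\Delta_{uT}\bar\Sigma_{uT}\Delta_{uT}$, $\Sigma_T^{-1}=\Delta_T^{-1}\bar\Sigma_T^{-1}\Delta_T^{-1}$ and $\exp(A^*(1-u)T)=I_2+(1-u)TA^*+O(T^2)$ (with $A^*_{2,1}=-1$), a direct check shows $\Delta_{uT}\exp(A^*(1-u)T)\Delta_T^{-1}\to N_u:=\begin{pmatrix}u^{3/2}&0\\-u^{1/2}(1-u)&u^{1/2}\end{pmatrix}$: its $(2,1)$-entry equals $u^{1/2}T^{-1}$ times $\bigl(-(1-u)T+O(T^2)\bigr)$, hence stays bounded, while the other entries converge trivially. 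Consequently $M^{(T)}_u:=\bar\Sigma_{uT}\bigl[\Delta_{uT}\exp(A^*(1-u)T)\Delta_T^{-1}\bigr]\bar\Sigma_T^{-1}\to \bar\Sigma_0N_u\bar\Sigma_0^{-1}=:M_u$, and since $G^{(T)}_{uT}=\Delta_{uT}M^{(T)}_u\Delta_T^{-1}$ with $T\Delta_{uT}=\mathrm{diag}(u^{3/2}T^{5/2},u^{1/2}T^{3/2})$ and $\Delta_T^{-1}=\mathrm{diag}(T^{-3/2},T^{-1/2})$, in $T\,G^{(T)}_{uT}=(T\Delta_{uT})M^{(T)}_u\Delta_T^{-1}$ only the $(2,1)$-entry $u^{1/2}(M^{(T)}_u)_{2,1}$ has a nonzero limit, the three others carrying a positive power of $T$. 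A short matrix product gives $(M_u)_{2,1}=-6u^{1/2}(1-u)$, so $T\,G^{(T)}_{uT}\to\begin{pmatrix}0&0\\-6u(1-u)&0\end{pmatrix}$. Finally $T\,m_{uT}(z_0)\to0$ and $m_T(z_0)\to z_0$, whence
$$
T\,\eta^{(T)}_{uT}(z_0,z_T)\longrightarrow \begin{pmatrix}0&0\\-6u(1-u)&0\end{pmatrix}(z_T-z_0)=\begin{pmatrix}0\\6u(1-u)(x_0-x_T)\end{pmatrix}=\varphi_{z_0,z_T}(u).
$$
The main obstacle is exactly this anisotropy of $\Sigma_t$: the computation must be organised (here via the conjugation by $\Delta_t$) so that the single surviving $T^{-1}$ term of $G^{(T)}_{uT}$ is isolated, and one must retain the first-order cross term of $\exp(A^*(1-u)T)$, since it is precisely what produces the finite limit $-6u(1-u)$.
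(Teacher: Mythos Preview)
Your argument is correct. The covariance computation $\mathrm{Cov}(Z_t,Z_T)=\Sigma_t\exp(A^*(T-t))$ is right, the sandwich bound $0\le\sigma^{(T)}_{uT}\le\Sigma_{uT}$ dispatches the variance limit immediately, and the scaling trick $\Sigma_t=\Delta_t\bar\Sigma_t\Delta_t$ together with the limit $\Delta_{uT}\exp(A^*(1-u)T)\Delta_T^{-1}\to N_u$ correctly isolates the single surviving entry $-6u(1-u)$ of $TG^{(T)}_{uT}$. All the numerical checks ($\bar\Sigma_0^{-1}$, the $(2,1)$-entry of $M_u$, and the final limit) are accurate.

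The route differs from the paper's. There the conditional law is obtained by multiplying transition densities $p_{uT}(z_0,\cdot)p_{vT}(\cdot,z_T)$ and completing the square, which yields $\sigma_u=(\Sigma_{uT}^{-1}+e^{-2\rho vT}B_v^*\Sigma_{vT}^{-1}B_v)^{-1}$ and an expression for $\eta_u$ involving both $\Sigma_{uT}^{-1}$ and $\Sigma_{vT}^{-1}$; the proof then proceeds by brute-force Taylor expansion of every $2\times 2$ entry (determinants, inverses, the products $\sigma_u e^{-\rho uT}S_uB_u$ and $\sigma_u e^{-\rho vT}B_v^*S_v$). You instead use the regression formula $\eta_t=m_t+\mathrm{Cov}(Z_t,Z_T)\Sigma_T^{-1}(z_T-m_T)$ and handle the anisotropy of $\Sigma_t$ once and for all by the diagonal conjugation $\Delta_t$. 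This is genuinely cleaner: the variance limit becomes a one-line monotonicity argument rather than an explicit expansion of $\sigma_u$, and for the mean only a single $2\times 2$ product $\bar\Sigma_0N_u\bar\Sigma_0^{-1}$ has to be evaluated. The paper's entry-by-entry expansion, on the other hand, gives more: it delivers the full first-order expansion of $\eta_{uT}$ (not just its leading term after multiplication by $T$), which is exactly what is reused later in Remark~\ref{pasnormal} and in the probabilistic lower-bound proof of Section~\ref{Eobast}. So your approach is more economical for the statement as written, while the paper's longer computation is tailored to the subsequent refinements.
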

\proof
To simplify notations, for $u\in(0,1)$, we denote 
$v\df 1-u$, $\eta_u\df \eta^{(T)}_{uT}(z_0,z_T)$
and $\sigma_u\df \sigma^{(T)}_{uT}(z_0,z_T)$.
With the notations of Lemma \ref{mtSt}, the vector $\eta_u$ and the matrix $\sigma_u$
are such that for any $z\in\RR^2$,
\begin{align*}
(z-\eta_u)^*\sigma_u^{-1}(z-\eta_u)
=&(z-m_{uT}(z_0))^*\Sigma_{uT}^{-1}(z-m_{uT}(z_0))
+(z_T-m_{vT}(z))^*\Sigma_{vT}^{-1}(z_T-m_{vT}(z))\\
&+C(z_0,z_T),\end{align*}
where $C(z_0,z_T)$ is a normalizing term which is independent of $z$.
It follows that
\bqn{etau}
\eta_u&=& \sigma_u(e^{-\rho uT}S_uB_{u}z_0+e^{-\rho vT}B_{v}^*S_vz_T)\\
\label{sigmau}\sigma_u&=&\lt(S_u+e^{-2\rho vT}B_{v}^*S_vB_{v}\rt)^{-1}\eqn
where for any $ w\geq 0$,
\bq
B_w&\df&
\lt( \begin{array}{cc}
\cos(wT)&-\sin(wT)\\
\sin(wT)& \cos(wT) \end{array}\rt)\\
S_w&\df& \Sigma_{wT}^{-1}\\
&=& \frac{1}{D_w}\lt( \begin{array}{cc}
\Sigma_{wT}(2,2)&-\Sigma_{wT}(1,2)\\
-\Sigma_{wT}(1,2)& \Sigma_{wT}(1,1) \end{array}\rt)\\
D_{w}&\df& \det( \Sigma_{wT})\\
&=& \Sigma_{wT}(1,1) \Sigma_{wT}(2,2)-( \Sigma_{wT}(1,2))^2
\eq
All these expressions depend on $T>0$ and the announced convergences will be obtained by expanding
them for small $T>0$. Indeed, simple computations show that for $w\in(0,1)$, as $T\ri 0_+$,
\bq
\lt( \begin{array}{cc}
\Sigma_{wT}(1,1)&\Sigma_{wT}(1,2)\\
\Sigma_{wT}(1,2)& \Sigma_{wT}(2,2) \end{array}\rt)
&=& \lt( \begin{array}{cc}
\frac{2(wT)^3}{3}+\cO((wT)^4)&-(wT)^2+\cO((wT)^3)\\
-(wT)^2+\cO((wT)^3)&2wT+\cO((wT)^2) \end{array}\rt)
\eq
where $\cO((wT)^p)$, for $p\in\RR$, stands for a quantity bounded by $A(wT)^p$, uniformly over  $\rho\in[-1,1]$
and for $wT$ small enough.
It follows that
\bq
D_{w}&=&\frac{(wT)^4}{3}+\cO((wT)^4)\\
S_w&=&  \lt( \begin{array}{cc}
\frac{6}{(wT)^3}+\cO((wT)^{-2})&\frac{3}{(wT)^2}+\cO((wT)^{-1})\\
\frac{3}{(wT)^2}+\cO((wT)^{-1})&\frac{2}{wT}+\cO(1)\end{array}\rt)
\eq
Using furthermore that for $v\in(0,1)$, we have $e^{-2\rho v T}= 1+\cO(vT)$ and that
\bq
B_v&=& \lt( \begin{array}{cc}
1+\cO((vT)^{2})&-vT+\cO((vT)^{3})\\
vT+\cO((vT)^{3})&1+\cO((vT)^{2})\end{array}\rt)
\eq
we deduce that
\bq
e^{-2\rho vT}B_{v}^*S_vB_{v}&=& \lt( \begin{array}{cc}
\frac{6}{(vT)^3}+\cO((vT)^{-2})&-\frac{3}{(vT)^2}+\cO((vT)^{-1})\\
-\frac{3}{(vT)^2}+\cO((vT)^{-1})&\frac{2}{vT}+\cO(1)\end{array}\rt)
\eq
Thus from (\ref{sigmau}), we get that for $u\in(0,1)$,
\bq
\sigma_u&=&\frac1{d(u,v)}
 \lt( \begin{array}{cc}
2(u+v)(uv)^3T^3+\cO(T^{4})&3(u^2-v^2)(uv)^2T^2+\cO(T^3)\\
3(u^2-v^2)(uv)^2T^2+\cO(T^3)&6(u^3+v^3)uvT+\cO(T^2)\end{array}\rt)
\eq
with 
\bq
d(u,v)&=&12(u^3+v^3)(u+v)-9(u^2-v^2)^2+\cO(T^{-1})\eq
Recalling that $v=1-u$, it appears that $d(u,v)=12+\cO(T^{-1})$, so
we obtain
\bqn{equivsiu}
\sigma_u&=&
 \lt( \begin{array}{cc}
\frac23(uv)^3T^3+\cO(T^{4})&(u-v)(uv)^2T^2+\cO(T^3)\\
(u-v)(uv)^2T^2+\cO(T^3)&2(u^3+v^3)uvT+\cO(T^2)\end{array}\rt)
\eqn
The second convergence announced in the proposition follows at once.
To deduce the first one, we begin by checking that for $u\in(0,1)$,
\bq
e^{-\rho uT}S_uB_{u}&=& \lt( \begin{array}{cc}
\frac{6}{(uT)^3}+\cO((uT)^{-2})&-\frac{3}{(uT)^2}+\cO((uT)^{-1})\\
\frac{3}{(uT)^2}+\cO((uT)^{-1})&-\frac{1}{uT}+\cO(1)\end{array}\rt)
\\
e^{-\rho vT}B_{v}^*S_v&=& \lt( \begin{array}{cc}
\frac{6}{(vT)^3}+\cO((vT)^{-2})&\frac{3}{(vT)^2}+\cO((vT)^{-1})\\
-\frac{3}{(vT)^2}+\cO((vT)^{-1})&-\frac{1}{vT}+\cO(1)\end{array}\rt)
\eq
In conjunction with (\ref{equivsiu}), we get
\bq
\sigma_ue^{-\rho uT}S_uB_{u}&=&  \lt( \begin{array}{cc}
1-3u^2+2u^3+\cO(T)&-u(1-u)^2T+\cO(T^2)\\
\frac{6u(1-u)}{T}+\cO(1)&1-4u+3u^2+\cO(T)\end{array}\rt)\\
\sigma_u
e^{-\rho vT}B_{v}^*S_v&=&
\lt( \begin{array}{cc}
3u^2-2u^3+\cO(T)&u^2(1-u)T+\cO(T^2)\\
-\frac{6u(1-u)}{T}+\cO(1)&-2u+3u^2+\cO(T)\end{array}\rt)\eq
In these expression, the $(2,1)$-entries explode as $T\ri0_+$,
it explains the renormalisation by $T$ considered in the above proposition
for $\eta^{(T)}_{uT}(z_0,z_T)$ and resulting convergence.\wwtbp
\begin{rem}\label{pasnormal}
Note that when $x_0=x_T$ (namely if $z_0$ and $z_T$
are on the same vertical line), it is simpler for the underlying vertical Brownian motion to put them in relation. Hence, the second component of $\varphi_{z_0,z_T}$ is equal to $0$. In this case, we thus expect the second component of $(\eta^{(T)}_{uT})$ to be convergent when $T\rightarrow0$. Pushing further the previous developments yields
\begin{equation}\label{morpdl}
(\sigma_ue^{-\rho uT}S_uB_{u})_{2,1}=\frac{6u(1-u)}{T}-2\rho u (1-u)(2-u)+\cO(T)
\end{equation}
and 
\begin{equation}\label{morpdl2}
(\sigma_u
e^{-\rho vT}B_{v}^*S_v)_{2,1}=-\frac{6u(1-u)}{T}-2\rho u(1-u^2)+\cO(T).
\end{equation}
Combined with the computations of the end of the previous proof, we deduce that if $x_0=x_T$,
then no renormalisation is needed for the mean vector and we get
\bq
\lim_{T\ri 0_+} \eta^{(T)}_{uT}(z_0,z_T)&=&
\lt( \begin{array}{c}
x_0\\
(1-4u+3u^2)y_0-(2u-3u^2)y_T-6\rho u (1-u)x_0 \end{array}\rt)
\eq
In particular even in the case when $z_0=z_T$, the asymptotical bridge doesn't stay still (except if $y_0=0$),
since
\bq
\lim_{T\ri 0_+} \eta^{(T)}_{uT}(z_0,z_0)&=&
\lt( \begin{array}{c}
x_0\\
(1-6u+6u^2)y_0-6\rho u(1-u)x_0) \end{array}\rt).
\eq
\end{rem}
\par
Similarly to the notational conventions endorsed in the introduction,
for $T>0$ and $z,z'\in\RR^2$, let $\PP^{(T)}_{z,z'}$ be the law of the process $Z$ evolving according to (\ref{catourne}),
conditioned by the event $\{Z_0=z,\ Z_T=z'\}$ and consider the process
$\xi^{(T)}\df(\xi_u^{(T)})_{u\in[0,1]}$ defined by
\bq
\fo u\in[0,1],\qquad 
\xi_u^{(T)}&\df& TZ_{Tu}\eq
Under $\PP^{(T)}_{z,z'}$ this process is Gaussian and Proposition \ref{uuu} enables to see that for
 fixed $z,z'\in\RR^2$, as $T$ goes to $0_+$, $\xi^{(T)}$ converges in probability (under  $\PP^{(T)}_{z,z'}$) toward the deterministic trajectory 
$\varphi_{z,z'}$, with respect to the uniform norm on $\cC([0,1],\RR^2)$).
Indeed, $\lim_{T\ri 0_+} T^2\sigma^{(T)}_{uT}(z_0,z_T)=0$ would even have been sufficient for this behavior.
Using the linear space-time transformation described in Subsection \ref{sitvoT}, this result can be retranscripted under the form
of Theorem \ref{T2}.
\par\begin{rem}
Following Remark \ref{pasnormal}, if $z=(x,y)$ and $z'=(x',y')$ are such that
$x=x'$, then  
 the process
$\wi\xi^{(T)}\df(\wi\xi_u^{(T)})_{u\in[0,1]}$, defined by
\bq
\fo u\in[0,1],\qquad 
\wi\xi_u^{(T)}&\df& Z_{Tu}\eq
converges in probability (under  $\PP^{(T)}_{z,z'}$) toward the deterministic trajectory 
$\wi\varphi_{z,z'}$, with respect to the uniform norm on $\cC([0,1],\RR^2)$), where
\bq
\fo u\in[0,1],\qquad\wi\varphi_{z,z'}(u)&\df&
\lt( \begin{array}{c}
x\\
(1-4u+3u^2)y-(2u-3u^2)y'-6\rho u(1-u)x \end{array}\rt)
\eq
Using the linear space-time transformation described in Subsection \ref{sitvoT}, this result can also rewritten
in the original setting of the Introduction.
\end{rem}

\subsection{A probabilistic proof of a persistence rate upper-bound}

The previous developments on the diffusion bridge associated to \eqref{catourne} enable us to retrieve a lower-bound of
$\PE(\tau\ge t)$, for $\tau$ defined in (\ref{tau}).
\begin{pro}\label{lowerboundprob}
(i) Let $(Z_t)_{t\geq 0}$ be a solution of \eqref{catourne} with $\rho\ge0$. For $\kappa\geq 1$, let $H_\kappa=\{(x,y),y<\kappa x\}$. Then, for any positive $\rho_0$, 
there exists a constant $\tilde{\lambda}>0$ such that for any $\rho\in[0,\rho_0]$ satisfying $\kappa\geq 3\rho$ and any $z_0\in H_\kappa$, one can find a constant $C$ (which depends on $z_0$ as well as on the parameters $\rho_0$ and $\kappa$) such that
$$\PE_{z_0}(\tau_{H_\kappa}\ge t)\ge C \exp(-\tilde{\lambda} t),\quad t>0.$$
(ii) Let  $(Z_t)$ be a solution of  \eqref{eds2}. There
 exists $\tilde{\lambda}>0$ such that if $0<2a\leq b$,  we have for every $z_0\in D=\{(x,y),x>0\}$
$$\PE_{z_0}(\tau\ge t)\ge C \exp(-\tilde{\lambda} \omega t),\quad t>0,$$
where $\omega=\sqrt{ab-a^2/4}$ and $C$ is a constant which depends on $z_0$, $a$, $b$ and $c$.
\end{pro}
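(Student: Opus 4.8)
I would prove (i) first and deduce (ii) from it through the linear space--time change of Corollary~\ref{cor:trmod}. The backbone of (i) is a ``one step'' lower bound on the probability of staying in $H_\kappa$ over a fixed time window while returning to a good bounded region, iterated through the Markov property; the sharp small-time behaviour of the bridge (Proposition~\ref{uuu}, Theorem~\ref{T2}) is what feeds this one step estimate.

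\textbf{From (ii) to (i).} Assume $0<2a\le b$. By Corollary~\ref{cor:trmod}, with $v=(\tfrac1{b^2}(\tfrac a2-b),1)^*$ and a suitable $\alpha>0$, the process $\hat Z_t=\sqrt\omega\,\alpha\,P_v^{-1}Z_{t/\omega}$ solves \eqref{eq:hypocoercif}, i.e.\ \eqref{catourne} with $\rho=\tfrac a{2\omega}$, and, after the reflection $z\mapsto-z$ used in the proof of Proposition~\ref{pro:low2}, the half-plane $\{x>0\}$ for $Z$ becomes the half-plane $H_\kappa=\{y<\kappa x\}$ for $\hat Z$ with $\kappa=\tfrac1\omega(b-\tfrac a2)$. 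A direct computation gives $2a\le b\iff\kappa\ge3\rho$, and $2a\le b$ also forces $\omega\ge\tfrac{\sqrt7}2a$, hence $\rho\le1=:\rho_0$; moreover $\tau$ for $Z$ corresponds, after the time change $s\mapsto\omega s$, to $\tau_{H_\kappa}$ for $\hat Z$. So (i) applied to $H_\kappa$ yields (ii) with persistence exponent $\tilde\lambda\,\omega$.

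\textbf{The one step estimate for (i).} For $\rho\in[0,\rho_0]$ and $\kappa\ge\max(1,3\rho)$ I would fix a small bounded open set $K$ with $\overline K\subset H_\kappa$, chosen so that it lies in the part of $H_\kappa$ where the drift of \eqref{catourne} does not push towards $\partial H_\kappa$ (informally, a thin horizontal sliver sitting below the line $y=-\rho x$). The claim to establish is: there are $T>0$ and $\delta>0$, depending only on $\rho_0$ and $\kappa$, with
\[
\inf_{z\in K}\ \PE_z\bigl(\tau_{H_\kappa}>T,\ Z_T\in K\bigr)\ \ge\ \delta .
\]
I would obtain it by disintegrating along $Z_T$,
\[
\PE_z\bigl(\tau_{H_\kappa}>T,\ Z_T\in K\bigr)=\int_K p_T(z,w)\,\PP^{(T)}_{z,w}\bigl(Z_{[0,T]}\subset H_\kappa\bigr)\,dw ,
\]
and estimating the two factors. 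For the bridge factor, Proposition~\ref{uuu} and the remarks following it show that, as $T\to0_+$, the bridge $(Z_s)_{s\in[0,T]}$ under $\PP^{(T)}_{z,w}$ converges uniformly in probability either to the bounded curve $\wi\varphi_{z,w}$ (when $x_z=x_w$), or, after renormalisation, to $\varphi_{z,w}$, a curve plunging into $\{x>0,\ y\ll0\}\subset H_\kappa$ when $x_w>x_z$; for $K$ as above these limit curves stay in $H_\kappa$ at positive distance from $\partial H_\kappa$, uniformly over $z,w\in\overline K$ (the needed uniformity being read off the explicit expansions in the proof of Proposition~\ref{uuu}, whose error terms are uniform on compact sets and for $\rho\in[-1,1]$), so that $\PP^{(T)}_{z,w}(Z_{[0,T]}\subset H_\kappa)\ge\tfrac12$ for $T$ small. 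For the density factor, $\int_Kp_T(z,w)\,dw=\PE_z(Z_T\in K)$ is bounded below uniformly over $z\in K$: $\Sigma_T$ is non-degenerate by hypoellipticity, and, because $K$ lies in the region just described, the deterministic motion $z\mapsto m_T(z)$ carries the first coordinate back towards $K$ (the degenerate direction being exactly that first coordinate, which is why the placement of $K$ matters and why $\kappa\ge3\rho$ is needed for the geometry to close up).

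\textbf{Iteration and end of the proof.} With the one step estimate in hand, the Markov property at the times $T,2T,\dots$ gives, for $z\in K$, $\PE_z(\tau_{H_\kappa}>nT)\ge\delta^{\,n}$, hence $\PE_z(\tau_{H_\kappa}\ge t)\ge\delta\,e^{-\tilde\lambda t}$ with $\tilde\lambda=\ln(1/\delta)/T$. For a general $z_0\in H_\kappa$ (this is ``Step~3''), Lemma~\ref{lem:trajcontrol} produces a controlled trajectory steering $z_0$ into the interior of $K$ while remaining in $H_\kappa$ on a time interval $[0,T_0]$, $T_0=T_0(z_0)$; since the coefficients of \eqref{catourne} are Lipschitz, the Stroock--Varadhan support theorem gives $\PE_{z_0}(\tau_{H_\kappa}>T_0,\ Z_{T_0}\in K)>0$, and a last use of the Markov property yields $\PE_{z_0}(\tau_{H_\kappa}\ge t)\ge C(z_0)\,e^{-\tilde\lambda t}$. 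Transporting this back through Corollary~\ref{cor:trmod} gives (ii). \textbf{The main difficulty} is precisely the one step estimate, and within it the simultaneous constraints on the core region $K$: it must sit strictly inside $H_\kappa$, be placed so that the small-time bridges between its points (which by Theorem~\ref{T2} either stay bounded or explode \emph{downwards}, never upwards, into $\{y\ll0\}$) remain in $H_\kappa$, and be robust enough under the drift and the degenerate noise over the window $T$ that $Z_T$ returns to $K$ with probability bounded below; reconciling these requirements is what consumes the hypothesis $\kappa\ge3\rho$, and also why this approach, though more transparent, does not deliver explicit constants.
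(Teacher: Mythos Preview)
Your overall architecture matches the paper's proof: a one-step estimate on a small compact set obtained by disintegrating along $Z_T$ and controlling the bridge via the expansions of Proposition~\ref{uuu}, iteration by the Markov property, extension to general $z_0\in H_\kappa$ through the support theorem and Lemma~\ref{lem:trajcontrol}, and transfer to~(ii) through Corollary~\ref{cor:trmod}. Your reduction of~(ii) to~(i) (checking $\kappa\geq 3\rho\Leftrightarrow b\geq 2a$, $\kappa\geq 1$, and $\rho\leq\rho_0$) is correct.

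There is however a genuine gap in your bridge analysis. You assert that for $z,w\in K$ the small-time bridges ``either stay bounded or explode \emph{downwards}, never upwards''. This is false for a fixed $K$: when $x_z>x_w$, the second coordinate of the bridge mean behaves like $6u(1-u)(x_z-x_w)/T$, which blows up to $+\infty$ and forces the bridge out of $H_\kappa=\{y<\kappa x\}$. The paper does not avoid this case; it tames it by letting the set depend on $T$: it takes $\mathcal S=[1,1+h_1]\times[-h_2,h_2]$ with $h_1=T^2$ and $h_2=T$, so that $\lvert x_z-x_w\rvert\leq h_1=T^2$ and the dangerous term $6u(1-u)(x_z-x_w)/T$ becomes $O(T)$. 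Without making the $x$-width of the core set shrink faster than $T$, your one-step estimate cannot hold.

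Relatedly, you place the condition $\kappa\geq 3\rho$ on the density factor. In the paper it enters in the bridge factor: the next-order term in $\eta^2_{uT}$, namely $2\rho u(1-u)\big((2-u)x_0+(1+u)x_T\big)$, contributes up to $\tfrac{3\rho}{2}$ when $x_0,x_T\approx 1$, and one needs $\kappa-\tfrac{3\rho}{2}$ to dominate all the small remainders in $\kappa\eta^1_{uT}-\eta^2_{uT}$; the paper uses $\kappa\geq 3\rho$ to get $\kappa-\tfrac{3\rho}{2}\geq\tfrac\kappa2\geq\tfrac12$. The density lower bound $\inf_{z\in\mathcal S}\PE_z(Z_T\in\mathcal S)>0$ follows simply from positivity and continuity of the transition density together with compactness of $\mathcal S$ (for the \emph{fixed} chosen $T$), and needs no special geometric placement of the set.
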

\begin{rem} 
It is possible to be more precise,  the same proof showing that for all $\varepsilon,\,\varepsilon'>0$, one can find a corresponding $\wi\lambda(\varepsilon,\varepsilon')>0$
such that (i) is satisfied if $\kappa\geq 3(1+\varepsilon)\rho/2$, $\kappa\geq \varepsilon'$ and if  $\wi \lambda$ is replaced by $\wi\lambda(\varepsilon,\varepsilon')$
(the constant $C$ has then also to depend on $\varepsilon$ and $\varepsilon'$).
It follows that in (ii) the condition $b\geq 2a$ can be replaced by $b\geq (5/4+\varepsilon)a$ with the price that $\wi\lambda$ (and $C$)
must depend on $\varepsilon>0$. But we believe that even these results are not optimal (e.g.\ one could hope for the condition
$b>(1/4+\varepsilon)a$ in (ii)), so we won't detail them.
Proposition \ref{lowerboundprob} is just an illustration of how the bridges could be used further.
\end{rem}
\begin{proof} (i) The proof is divided into three steps. In the first one, we show that  we can build a subset 
${\cal S}$ of $H_\kappa$
for which any  bridge associated to (\ref{catourne}), starting  and ending in ${\cal S}$ (at a time $T$ which will be chosen small) stays in $H_\kappa$ with a high probability. Then, in the second one, we use a Markov-type argument close to the one used in the proof of Proposition \ref{upper1erresult} to obtain the announced result when the starting point of $(Z_t)_{t\geq 0}$ is in ${\cal S}$. Finally,
we extend the result to any initial point in $ H_\kappa$.\\

\noindent \textbf{Step 1.} \underline{\textit{Lower-bound for $\inf_{z,z'\in{\cal S}}\PE_{z,z'}^{(T)}(\tau_{H_\kappa}>T)$ for a particular $T>0$.}} Let $z_0=(x_0,y_0)$ and $z_T=(x_T,y_T)$ belong to $\ER^2$. Denote respectively by $\eta_{uT}^1(z_0,z_T)$ and $\eta_{uT}^2(z_0,z_T)$ the first and second coordinate of
$\eta_{uT}^{(T)}(z_0,z_T)$. First, owing to Proposition \ref{uuu} (and to the more precise developments stated in its proof) and to \eqref{morpdl} and \eqref{morpdl2}, one checks  that 
\begin{equation}\label{eq:etau1}
\eta_{uT}^1(z_0,z_T)= \left(x_0+(x_T-x_0)(3u^2-2u^3)\right)+\gamma_1(z_0,z_T,T)
 \end{equation}
  and

 \begin{equation}\label{eq:etau2}
\eta_{uT}^2(z_0,z_T)= \frac{6u(1-u)(x_0-x_T)}{T}+2\rho u (1-u)\left((2-u)x_0+(1+u)x_T\right)+\gamma_2(z_0,z_T,T)
\end{equation}
where $\gamma_1$ and $\gamma_2$ satisfy: there exists $T_0>0$ and a positive constant $C$ such that for every $T\in(0,T_0]$, 
for any $z_0$ and $z_T$ and for any $\rho\in[0,\rho_0]$ (due to the uniformity of $\cO(T)$ in \eqref{morpdl} and \eqref{morpdl2}
with respect to $\rho$ in a compact set of $\RR_+$).
$$|\gamma_1(z_0,z_T,T)|  \le C(|z_0|+|z_T|) T\quad\textnormal{and}\quad |\gamma_2(z_0,z_T,T)|\le C(|y_0|+|y_T|+(|z_0|+|z_T|)T).$$
 Second, by Theorem V.5.3 of \cite{adler} (applied with $\alpha=1$ and $K=T_0$), there exists a universal constant $C$ such that 
$$\forall T\ge0,\;\forall h \ge1,\quad \PE_{z_0,z_T}^{(T)}\left(\sup_{u\in[0,T]}|Z_{uT}-\eta_{uT}^{(T)}(z_0,z_T)|> h \right)\le C h\exp\left(-\frac{h ^2}{2\bar{\sigma}_T}\right)$$
where $\bar{\sigma}_T=\sup_{u\in[0,1]}|(\sigma_{uT}^{(T)})_{1,1}|+\sup_{u\in[0,1]}|(\sigma_{uT}^{(T)})_{2,2}|.$ By Proposition \ref{uuu}, for every $u\in[0,1]$,
${\sigma}_{uT}^{(T)}(z_0,z_T)\rightarrow0$ as $T\rightarrow0$. Note that this convergence is 
uniform in $z_0$ and $z_T$ since 
the covariance matrix does not depend of them. By \eqref{equivsiu}, it appears that the convergence is also uniform in $u$ and a little sharper study of the dependence of $\sigma_{(uT)}^{(T)}$ in $\rho$ yields in fact that for every $\rho_0>0$,
$$\sup_{\rho\in[0,\rho_0],z_0,z_T\in\ER^2 }\bar{\sigma}_T\xrightarrow{T\rightarrow0}0.$$
Applying the previous inequality with $h=\sqrt{\sigma_T}$, we deduce that there exists $T_1\in(0, T_0]$ such that for every $T\in(0, T_1]$, for every $z_0,z_T\in\ER^2$ and every $\rho\in[0,\rho_0]$, 
$$\PE_{z_0,z_T}^{(T)}\left(\sup_{u\in[0,T]}|Z_{uT}-\eta_{uT}^{(T)}(z_0,z_T)|\le \sqrt{\sigma_T}\right)\ge \frac{1}{2}.$$
We shall now build a box ${\cal S}=[1,1+h_1]\times [-h_2,h_2]$ (where $h_1$ and $h_2$ are positive numbers) for which there exists a positive $T$ such that when $(z_0,z_T) \in {\cal{S}}^2$, the mean $\eta_{uT}^{(T)}(z_0,z_T)$ 
stays at a distance greater than $1$ of the boundary of $H_\kappa$. Checking that for a point $(x,y)$ of $\ER^2$, the distance from  $(x,y)$ to the boundary $\pa H_{\kappa}$ is equal to $|\kappa x -y|/\sqrt{\kappa^2+1}$, we thus need to find  $h_1$, $h_2$ and $T$
in order that 
\bqn{eq:cccd}
\nonumber  \inf_{z_0,z_T\in{\cal S}}\inf_{u\in[0,1]}{\kappa\eta_{uT}^1-\eta_{uT}^2}&>&{\sqrt{\kappa^2+1}}\sqrt{\sigma_T}\\
&\geq & \sqrt{2}\kappa \sqrt{\sigma_T}.
\eqn
Note that $T$, $h_1$ and $h_2$ will depend on $\rho_0$ but not on $\rho\in[0,\rho_0]$ and $\kappa\geq 1$ 
satisfying $\kappa\ge3\rho$.
Using \eqref{eq:etau1} and \eqref{eq:etau2}, we obtain that there exists $C>0$ such that  for every $\rho\in[0,\rho_0]$, for every $T\in(0, T_1]$ and every $(z_0,z_T)\in{\cal S}$
\begin{equation}\label{eq:cdcd}
\eta_{uT}^1\ge {1}-C(1+h_1+h_2)T\quad\textnormal{and}\quad \eta_{uT}^2\le \frac{3}{2T}h_1+\frac{3\rho}{2}+C h_2
+C(1+h_1+h_2)T.\end{equation}
Equation \eqref{eq:cdcd} shows that  \eqref{eq:cccd} is fulfilled as soon as 
\bq
\kappa-\frac{3\rho}{2}-{C}(h_1+h_2)(1+\kappa)T-\frac{3}{2T}h_1
-Ch_2&> &\sqrt{2}\kappa \sqrt{\sigma_T}.\eq
Taking into account that $\kappa\geq 1$ and $\kappa\geq 3\rho$,
the above inequality is satisfied if
\bqn{juste}
\frac12&> &2{C}(h_1+h_2)T+\frac{3}{2T}h_1 +Ch_2+ \sqrt{2} \sqrt{\sigma_T}.\eqn
relation which no longer depends on $\kappa$.
We can now set for instance $h_1\df T^2$ and $h_2\df T$ and choose $T\in(0,T_1]$ small enough so that
(\ref{juste}) is satisfied. As a consequence, 
the subset ${\cal S}=[1,1+h_1]\times[-h_2,h_2]$ of $H_\kappa$  is such that for every $\rho\in[0,\rho_0]$ and $\kappa\geq 1$ verifying $\kappa\ge 3\rho$, we have
\bqn{eq:nudez}
\inf_{z,z'\in{\cal S}}\PE_{z,z'}^{(T)}(\tau_{H_\kappa}>T)&\ge& \frac{1}{2}.
\eqn

\noindent \textbf{Step 2.} \underline{\textit{Lower-bound for $\PE_{z_0}(\tau_{H_\kappa}>t)$ when $z_0\in{\cal S}$}.} We consider a time $T>0$  and a subset ${\cal S}=[1,1+h_1]\times [-h_2,h_2]$ of $H_\kappa$ (depending only on $\rho_0$) for which \eqref{eq:nudez} holds. For every $\ell\ge1$, we have
\begin{align*}
\PE_{z_0}(\tau_{H_\kappa}>\ell T,&Z_{\ell T}\in{\cal S})\ge\\
&\PE_{z_0}(\tau_{H_\kappa}>\ell T,Z_{\ell T}\in{\cal S}|\tau_{H_\kappa}>(\ell -1)T,Z_{(\ell -1)T}\in{\cal S})\PE_{z_0}(\tau_{H_\kappa}>(\ell -1)T,Z_{(\ell -1)T}\in{\cal S}).\end{align*}
By the Markov property,
\bq
\PE_{z_0}(\tau_{H_\kappa}>\ell T,Z_{\ell T}\in{\cal S}|\tau_{H_\kappa}>(\ell -1)T,Z_{(\ell -1)T}\in{\cal S})
&=&
\int H(z)\, \mu_{z_0,(\ell-1)T}(dz)
\eq
where 
\bq
\fo z\in \cS,\qquad H(z)&\df&\PE_z(Z_T\in{\cal S})\int_{\cal S}\PE_{z,z'}^{(T)}(\tau_{H_\kappa}>T)\,\mu_{z,T}(dz').\eq
and $\mu_{z_0,(\ell-1)T}$ is the conditional law (under $\PP_{z_0}$) of $Z_{(\ell -1)T}$ on $\{\tau_{H_\kappa}>(\ell -1)T,Z_{(\ell -1)T}\in{\cal S}\}$.
Owing to \eqref{eq:nudez} and to the fact that the support of $\mu_{z_0,\ell -1)T}$ is included in $\cS$, we get
$$\PE(\tau_{H_\kappa}>\ell T,Z_{\ell T}\in{\cal S})\ge\varsigma\PE(\tau_{H_\kappa}>(\ell -1)T,Z_{(\ell -1)T}\in{\cal S})\quad\textnormal{with}\quad \varsigma:=\frac{1}{2}\inf_{z\in\cS}\PE_z(Z_T\in{\cal S}).$$
 Note that for $T>0$, the transition density $\RR^2\ni z'\mapsto f_{z,T}(z')\df\frac{dP_{z}(Z_T\in dz')}{dz'}$  is positive and continuous with respect to
 $(z,z')\in(\RR^2)^2$. It follows that the coefficient $\varsigma$ is  positive by compactness of $\cS$. Furthermore it is uniform over $\kappa,\rho$ satisfying the conditions of Proposition \ref{lowerboundprob} (but a priori $\varsigma$ depends on $\rho_0$, as $T$ and $\cS$ do).
Then, since for all $t>0$,
$$\PE_z(\tau _{H_\kappa}>t)\ge \PE_z(\tau _{H_\kappa}>k_T T,Z_{k_T T}\in{\cal S}),$$
where $k_T=\lfloor t/T\rfloor+1$, we deduce from an induction that for every $z\in{\cal S}$,
$$\PE_z(\tau _{H_\kappa}>t)\ge \varsigma^{k_T}\ge C\exp(-\tilde{\lambda}t),$$
where $\tilde{\lambda}=-\log(\varsigma)/{T}$ (depending only on  $\rho_0$).\\

\noindent \textbf{Step 3.} \underline{\textit{Lower-bound for $\PE_{z}(\tau_{H_\kappa}>t)$ when $z\in H_\kappa$}.} The idea of this step is identical to that of the proof of Proposition \ref{prophyplow}\textit{(ii)}. More precisely,  to extend the lower-bound  obtained above to any $z_0=(x_0,y_0)\in H_\kappa$ (up to a constant $C$ which depends to $z_0$), it is enough to build a controlled trajectory $(z_{\varphi}(t))_{t\ge0}$ such that 
$z_{\varphi}(0)=z_0,$ $z_{\varphi}(t_0)$ belongs to  ${\cal S}$ and such that $ z_{\varphi}(t)\in H_\kappa$ for every $t\in[0,t_0]$.

\smallskip
Owing to Lemma \ref{lem:trajcontrol}$(ii)$, it is enough to consider the case $y_0=0$. We treat successively the cases $x_0\le 1$ and $x_0\ge 1+h_1$.
If $x_0\le 1$, the idea is to join a point of a path of the free dynamical system which passes through $z_{\cal S}=(1+h_1/2,0)$. More precisely,  by Lemma \ref{lem:trajcontrol}$(ii)$, we know that the solution $(x(t),y(t))_{t\ge0}$ to the free dynamical system starting from $(0,-(1+h_1/2)e^{\frac{\rho\pi}{2}})$ passes through  $z_{\cal S}$ at time $\pi/2$ and that ${\cal C}=\{(x(t),y(t)),0<t<\pi/2\}$ is a curve included in $(0,+\infty)\times(-(1+h_1/2)e^{\frac{\rho\pi}{2}},0)$. It remains to join a point of ${\cal C}$ (without leaving out $H_\kappa$).  This can be done applying Lemma \ref{lem:trajcontrol}$(i)$ with $v=-(1+h_1/2)e^{\frac{\rho\pi}{2}}$.\\
Suppose now that $x_0\ge 1+h_1$. The construction is slightly different in the cases $\rho>0$ and $\rho=0$ (by the assumptions of Proposition 
\ref{lowerboundprob},
the 
situation $\rho<0$ is excluded). If $\rho>0$, we join  $z_{\cal S}$ by crossing the segment $[z_0,z_{\cal S}]$ with a controlled trajectory. Set $(x(t),y(t))_{t\ge0}=(x_0e^{-\rho t},0)_{t\ge0}$.  This trajectory can be viewed as the solution starting from $z_0$ of 
\begin{equation*}
\begin{cases}
\dot{x}(t)=-\rho x(t)-y(t)\\
\dot{y}(t)=-\rho y(t)+x(t)+\varphi(t)
\end{cases}
\end{equation*}
with $\varphi(t)=-x(t)$. Hence, this is a controlled trajectory which clearly crosses  $[z_0,z_{\cal S}]$ in a finite time.
Finally, if $\rho=0$, we join a point ${z}_1=(x_1,-h_2/2)$ using Lemma \ref{lem:trajcontrol}$(i)$. If $x_1\le 1$, we are reduced to the first considered case. So  we can assume that $x_1\ge 1+h_1$ and we join the point   $\tilde{z}_{\cal S}=(1+h_1/2,-h_2/2)$ by crossing the segment $[{z}_1,\tilde{z}_{\cal S}]$ with a controlled trajectory (more precisely, $(x_1-h_2t/2,-h_2/2)_{t\ge0}$ is a controlled trajectory). This ends the proof of $(i)$.

\smallskip
\textit{$(ii)$} By Corollary \ref{cor:trmod}, there exists $\alpha$ such that $(\hat{Z}_t)_{t\ge0}\df\lt({\sqrt{\omega}\alpha} P_v^{-1}Z_{\frac{t}{\omega}}\rt)_{t\ge0}$ is a solution of 
\eqref{eq:hypocoercif}. Remind that $\tau=\inf\{t>0, \Re(Z_t)<0\}$ and for a subset $C$ of $\ER^2$, set $\tau^{\hat{Z}}_C=\inf\{t>0,\hat{Z}_t\in C^c\}$. Similarly to the proof of Proposition \ref{pro:low2}, one checks that for every $z\in\{(u,v),u>0\}$,
$$\PE_z(\tau>t)=\PE_{\hat{z}}(\tau_{H_\kappa}^{\hat{Z}}>\omega t)$$
where $\omega=\sqrt{ab-a^2/4}$, $\kappa=\frac{1}{\omega}(b-\frac{a}{2})$, $H_\kappa=\{(u,v),v<\kappa u\}$ and $\hat{z}$ belongs to $H_\kappa$. The result  follows by applying the first part of this proposition with $\rho=a/(2\omega)$. 
Indeed, the assumption $\kappa\geq 3\rho$ amounts to $b\geq 2a$. For the other condition, $\kappa\geq 1$, taking into account that $\omega< \sqrt{ab}$,
it is sufficient that $b-a/2\geq \sqrt{ab}$, namely $\frac{b}a\geq \lt(\frac{1+\sqrt{3}}{2}\rt)^2$ and note that $\lt(\frac{1+\sqrt{3}}{2}\rt)^2\leq 2$.
\wwtbp
\end{proof}
\section{Simulations and statistical considerations}\label{sec:stat}

In this section, we shortly focus on some statistical problems related to the estimation of the real parameters which govern the trajectories  solutions of \eqref{eds}. We denote the unknown underlying parameters $(a^*,b^*,c^*)$ and aim at developing some statistical estimation methods of these parameters.
We are also interested in the average time for the process $(X_t)_{t \geq 0}$ to return to the equilibrium price.
When $c^*$ is small, it can be shown that such an average time is close to $T^*/2$
 (see the results of \cite{MR1652127} and \cite{MR590626} for small noise asymptotics of random dynamical system)
where $T^*$ is the period of the deterministic process  associated to the model:
\begin{equation}\label{eq:periode}
{T}^* : = \frac{2\pi}{\omega^*} \quad \textnormal{where}\quad {\omega^*:=\sqrt{a^*(b^*-\frac{a^*}{4})}}.
\end{equation}  
 With a slight abuse of language, 
 ${T}^*$ will be then called \textit{pseudo-period} of the process.


It is natural to wonder if it is efficient to first estimate $(a^*, b^*)$ and then, to plug these estimates $(\hat{a},\hat{b})$ in the analytical formula given above. 
We describe in the next paragraph how one can use the maximum likelihood estimator to approximate  $(a^*,b^*)$. Then,  a short simulation study exhibit rather different behaviours of the estimator of the pseudo-period $\hat{T}$ derived from the values $(\hat{a}_{ML},\hat{b}_{ML})$ plugged into \eqref{eq:periode}.
When $c$ is small, we compare such an estimation with a more natural estimator derived from hitting times of the level $X=0$ and show that in some cases, this last estimation can perform better for the recovery of $T^*$. In our short study, we will assume that the process starts from its equilibrium, that is $X_0=0$. This assumption slightly simplifies the MLE derived below.

\subsection{Maximum likelihood estimator for parameters $a^*$ and $b^*$}

\paragraph{Statistical settings}
In this paragraph, we first detail the computation of the MLE of the real parameters denoted $(a^*,b^*)$ when one observes the whole trajectory of the price $X$ between $0$ and $t$. This is of course an idealization and a dramatical simplification of the true statistical problem since in practical situations, we can only handle some values of $X$ in a discreted observation grid $(k \Delta)_{0 \leq k \leq t/\Delta}$. 

Even if the relative size of $\Delta$ compared to  the time length of observation $t$ is of first interest for some real statistical applications, we simplify this short study and consider only continuous observation times. We  leave this important question of the statistical balance between $\Delta$ and $t$ to a future work. In such a situation, it is easy to recover the parameter $c^*$ by considering the normalized quadratic variation of the trajectory $(X_s)_{0 \leq s \leq t}$. 
$$
c^*=\frac{\langle X \rangle_t}{t}
$$
Hence, in the sequel, we only consider  the problem of the estimation of $a^*$ and $b^*$ and we assume the knowledge of $c^*$ (we fix it to $1$  for sake of convenience).

\paragraph{Change of measure formula}
In order to estimate $(a^*,b^*)$, we can only handle the process $X$ since $Y$ depends on the unobserved parameter $b^*$ through the relation
$$
\fo t \geq 0, \qquad Y_t= b^* \int_{0}^t \exp (b^*(s-t)) dX_s - b^* X_t.
$$
For any choice of $(a,b) \in \RR_+^2$, we consider the two processes defined by
\begin{equation}\label{eq:Ytb}
\fo t\geq 0, \qquad 
Y_t^b = b \int_{0}^t \exp (b(s-t)) dX_s - b X_t,
\end{equation}
and 
$$
H_t^{a,b} = (b-a) X_t + Y_t^b.
$$
In Equation \eqref{eq:Ytb}, $Y^b$  depends on the increments $dX_s$. A simple integration by part yields the equivalent expression:
$$
    \fo s \geq 0, \qquad     Y_s^b=e^{- b t} \left( -b^2 \int_{0}^t e^{b s} X_s ds\right).
$$

The fact that $Y = Y^{b^*}$ follows from the definition of the process $Y$. Thus,  $(X_t)_{t \geq 0}$ satisfies
$$
dX_t=  H_t^{a^*,b^*} dt + dB_t.
$$
Now, we can apply the Girsanov formula: 
if we denote by $\PP_{a^*,b^*}$ the law of the process, we then obtain the change of measure formula:
$$
\frac{d\PP_{a^*,b^*}}{d\QQ_0}(X)= \exp \left(\int_{0}^t H_s^{a^*,b^*}(X_s) d X_s - \frac{1}{2} \int_0^t H_s^{a^*,b^*}(X_s)^2 ds \right).
$$
\paragraph{Maximum likelihood}
Given any trajectory $X$, we can then define $L_t$, the log-likelihood of the parameters $(a,b)$ as follows:
$$
L_t(a,b)=\int_{0}^t H_s^{a,b}(X_s) dX_s - \frac{1}{2} \int_{0}^t H_s^{a,b}(X_s)^2 ds.
$$
The expression above can be modified using an integration by part. We then obtain the  "robust" formulation:
\begin{eqnarray}\label{eq:mle}
L_t(a,b)&=&\frac{b-a}{2} \left[ X_t^2-t\right] + X_t Y_t^b  \nonumber\\ 
& & + \int_0^t b^2 X_s ^2 + b X_s Y_s^b  - \frac{1}{2} \left[ (b-a) X_s + Y_s^b\right]^2 ds.
\end{eqnarray}

The maximum likelihood  estimator is then formally defined by
$$
(\hat{a}^{ML}_t,\hat{b}^{ML}_t) := \arg\max_{(a,b) \in \RR_+^2} L_t(a,b).
$$
For any $b \geq 0$, $a \longmapsto L_t(a,b)$ is a concave function and  thus the optimal value of $a$ given any $b$ is
$$
a_b = b+ \frac{\int_{0}^t X_s Y_s^{ b} ds + \frac{t-X_t^2}{2}}{\int_{0}^t X_s^2 ds}.
$$
Hence,  $\hat{b}_t^{ML}$ is obtained by maximizing $b \longmapsto L_t(a_b,b)$. Unfortunately, we did not find any explicit formula regarding the relation $b \longmapsto Y^b$. Hence, to estimate $b^*$, we  use an exhaustive numerical search of the optimal value of $b$ and we obtain $\hat{b}_t^{ML}$.

\subsection{Estimation of the mean pseudo-period $T^*$ with hitting times strategy}

It may be possible to estimate $T^*$ using the maximum likelihood estimators $(\hat{a}_t^{ML},\hat{b}_t^{ML})$ defined above with and plug them in the relation \eqref{eq:periode} (which is supposed to be true only for $\mathcal{T}^*$ with a vanishing noise level)
$$
\hat{T}^{ML}_t := \frac{2 \pi}{\sqrt{\hat{a}^{ML}_t \left( \hat{b}^{ML}_t - \hat{a}^{ML}_t/4\right)}}.
$$
Of course, the ability of this estimator to well approximate $T^*$ highly depends on the asymptotic behaviour of $(\hat{a}^{ML}_t,\hat{b}^{ML}_t)$. We will discuss on several statistical questions related to this study in the next section.

We can also compare $\hat{T}^{ML}_t$ with a more nature way to estimate the mean return time all along the trajectories by  considering the sequence of crossing times of level $0$ of $(X_s)_{0 \leq s \leq t}$. In this view, let us consider $\epsilon>0$, and define a skeleton chain associated to the trajectory $(X_s)_{0 \leq s \leq t}$. The sequences $(\tau_k)_{k \geq 0}$ and  $(r_k)_{k \geq 0}$ are initialized with: 
$$
\tau_0 := 0 \qquad \text{and} \qquad r_0 := \inf \left\{s \geq \tau_0 \vert |X_s| \geq \epsilon \right\}
$$ and recursively built as follows:
$$
\forall k \geq 0 \qquad \tau_{k+1} := \inf \left\{s \geq r_k \vert X_s = 0 \right\} \qquad \text{and} \qquad r_{k+1} := \inf \left\{s \geq \tau_k \vert |X_s| \geq \epsilon \right\}.
$$ 
 When $c^*$ is small, we can now define a quite natural estimator of the mean pseudo-period using this construction.
If we set $N_t := \sup \left\{k \geq 0 \vert \tau_k \leq t\right\}$, we set
$$
\hat{T}^{\epsilon}_t := 2 \frac{\sum_{k = 1}^{N_t} (\tau_{k+1}-\tau_k)}{N_t} = 2 \frac{\tau_{N_t}- \tau_0}{N_t}.
$$

\subsection{Statistical performances and open problems}

To establish the numerical performances of $\hat{T}^{\epsilon}_t $  and $\hat{T}^{ML}_t$, we use the following statistical setting: 
several trajectories defined on $[0,T]$ are observed on some discrete times $(t_k)_{0 \leq k \leq K}$. The observation time are equally sampled with a constant step size $\Delta$ such that  $t_k - t_{k-1} = \Delta$.
We are interested in the behaviour of our two estimators in the two distinct asymptotic settings:
\begin{itemize}
\item  High resolution sampling scheme $\Delta \longrightarrow 0$ 
\item Long time observation $T \longrightarrow + \infty$.
\end{itemize}

For our purpose, the threshold is defined empirically after several runs of the estimator. It should be carefully chosen since $\epsilon$ is the parameter which enables to distinguish  real crossings of $X=0$ from the natural volatility of the model carried out by the brownian noise. Thus, the calibration of $\epsilon$ should be related to the noise level contained in $c^*$. In our simulation, we have chosen  $\epsilon=c^* \sqrt{\Delta}$.
We use for each estimator $N=10^{3}$ Monte-Carlo simulations to obtain the repartition of $\hat{T}^{\epsilon}_t $  and $\hat{T}^{ML}_t$ around $T^*$.

Moreover, we use a discretized version of the stochastic differential equation with several step size. We show in Figure \ref{fig:MLE_estimation} the performances of  $\hat{T}^{ML}_t$ for several size of discretization step $\Delta$ as well as the performances of $\hat{T}^{\epsilon}_t$  in Figure \ref{fig:traj_estimation}.

\begin{figure}[h]
 \centering
\includegraphics[height=5cm]{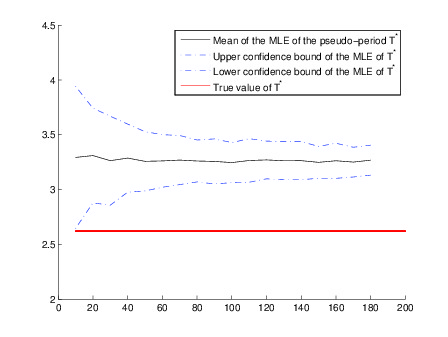}
\includegraphics[height=5cm]{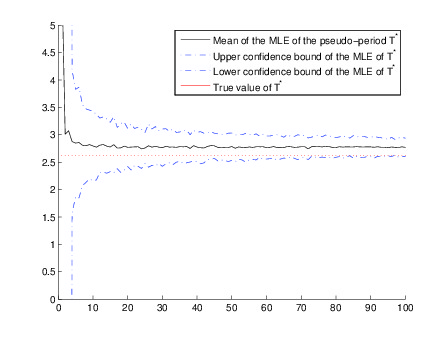}
\includegraphics[height=5cm]{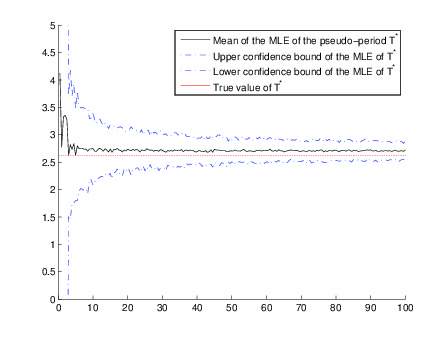}
\caption{\label{fig:MLE_estimation} Estimation of $T^*$ using $\hat{T}^{ML}_t$ with respect to the observation time $t$ for $a=1, b=6, c=1$. (top left: $\Delta=10^{-2 }, c=1$, top right: $\Delta=5 . 10^{-3}$, bottom left: $\Delta=10^{-3}$, bottom right: 
$\Delta=5 . 10^{-4}$).}
\end{figure}

\begin{figure}[h]
 \centering
\includegraphics[height=5cm]{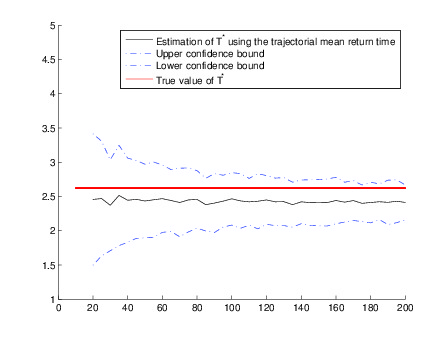}
\includegraphics[height=5cm]{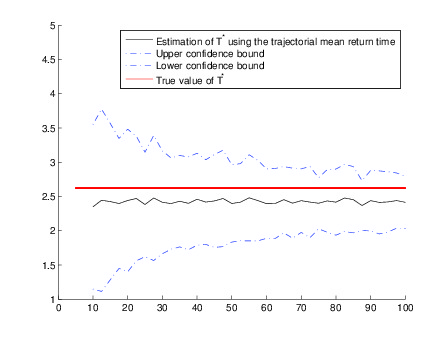}
\includegraphics[height=5cm]{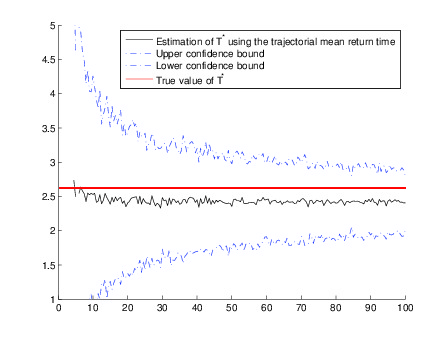}
\caption{\label{fig:traj_estimation} Estimation of $T^*$ using $\hat{T}^{\epsilon}_t$ with respect to the observation time $t$ for $a=1, b=6, c=1$. (top left: $\Delta=10^{-2 }, c=1$, top right: $\Delta=5 . 10^{-3}$, bottom left: $\Delta=10^{-3}$, bottom right: 
$\Delta=5 . 10^{-4}$).}
\end{figure}

One may instantaneously remark that the step size $\Delta$ has an important influence on the ability of $\hat{T}^{ML}_t$ to recover $T^*$ although this parameter does not seem so important for the estimator $\hat{T}^{\epsilon}_t$.
Simulations show that the smaller $\Delta$, the smaller the bias of $\hat{T}^{ML}_t$.
Moreover, the variance of the estimator is mainly determined by the length of simulation (time $t$). Hence, for a fixed step size of simulation, Figures \ref{fig:MLE_estimation} and \ref{fig:traj_estimation} demonstrate that it seems better to use $\hat{T}^{\epsilon}_t$ to infer $T^*$. When one may let $\Delta  \longmapsto 0$, the maximum likelihood estimator  $\hat{T}^{ML}_t$ seems more convenient.

We shortly describe several problems of interest which concern the estimation of $T^*$.
First, the influence of $\Delta$ as well as the influence of  $t$ needs to be understood for the estimation of $T^*$ using the MLE. This question may be faced with a careful understanding of the natural score function defined by the log-likelihood $L_t$. Our simulations tend to show that a special  asymptotic behaviour of ($\Delta,t) \longrightarrow (0,+\infty)$ should be considered to obtain optimal estimations.

Second, the size of the threshold $\epsilon$ in the definition of $\hat{T}^{\epsilon}_t$ has not been theoretically investigated, although it has a great influence on the ability of $\hat{T}^{\epsilon}_t$ to nicely recover $T^*$. Hence, there should also exist a precise asymptotic regime of $(\epsilon,t)\longrightarrow (0,+\infty)$ which may permit to obtain statistical reconstruction properties. Such a last result should be derived from a careful inspection of the local times spent by $(X_t)_{t \geq 0}$ around the level $0$ (to fix $\epsilon$) as well as the concentration rate of the hitting times which may be obtained using Theorem \ref{T1}.

At last, the link between $T^*$ and the expected time needed for  $(X_t)_{t \geq 0}$ to return to its equilibrium price is still mysterious. We only identify this link in the small noise asymptotics and even though such a relation seems to be true in more general situations for $T^*$, a theoretical proof is missing.

These three questions are far beyond the scope of this study, and we let them open for future works.

\section{Conclusion}

In this paper a model of speculative bubble evolution was proposed. 
The dynamics has to be at least of second order, to have a chance to display a weak periodic behavior typical of this kind of phenomena.
 This second order is induced by the way the process under consideration weights its past evolution to infer its future behavior (increase/decrease
 in the close past favoring an immediate tendency to follow the same trend). Dynamics of all orders (including non-integer ones) could be obtained in the same fashion,
 by modifying the weights. At the ``microscopic level", the latter are related  to the distribution of the backward time windows used by a multitude of agents 
 in order to speculate on the future evolution. \par
 But we restricted ourselves to  second order dynamics: it is the simplest one and in some sense it mimics Newtonian mecanics, which
 are also of second order, the forces impacting directly on the acceleration.
 A main difference is the noise entering our modeling, which is required to maintain some stability of the system under study.
 Nevertheless  and informally, this analogy with the physics law of motion enables to unmask some misleading arguments used by the
 real estate agencies and the mass media:  they mainly explain the evolution of prices by making an inventory of the forces in the housing market, such as loan interest rates, the growth of population, etc.\ (all these factors, as well as opposite leverages, are summed up in our parameter $a$), forgetting the order of the evolution equation (induced by the parameter $b$). Transposed in the astronomy field, it would amount
 to make the observation that the main interaction between the Sun and the Earth goes through gravitation, so that we should conclude
 that our planet would soon end up in the Sun. Luckily, we are essentially saved by the second order of the 
kinetics law which enables the Earth to turn around the Sun!

\par We have also assumed that the repelling force to the equilibrium level $X=0$ is linear and traduced in \eqref{eq:model} by the dritf term $- a X_t$ at any time $t$. This drift term mimics an economic repelling force which may be non-linear. Nevertheless, our linearization may be considered as a first order valid approximation at least near the equilibrium state $X=0$.

\par
From the mathematical point of view, our main interest was in the return time to the equilibrium ``price" and we have shown that it is
more concentrated than the relaxation to the equilibrium distribution of the prices. This feature explains the bubble/almost periodic aspect of the typical trajectories. 
We  have obtained some lower and upper bounds of this concentration rate in Theorem \ref{T1}. Even if there is still a gap of order around ten between our lower and upper bounds, numerous simulations (not shown in this paper) using the Fleming-Viot's type algorithm described in \cite{MR1956078} lead to the conjecture that $\lambda_0(D) = \frac{\log(2)}{\pi} \omega$.

\par

\begin{figure}[h]
 \centering
\includegraphics[height=7cm]{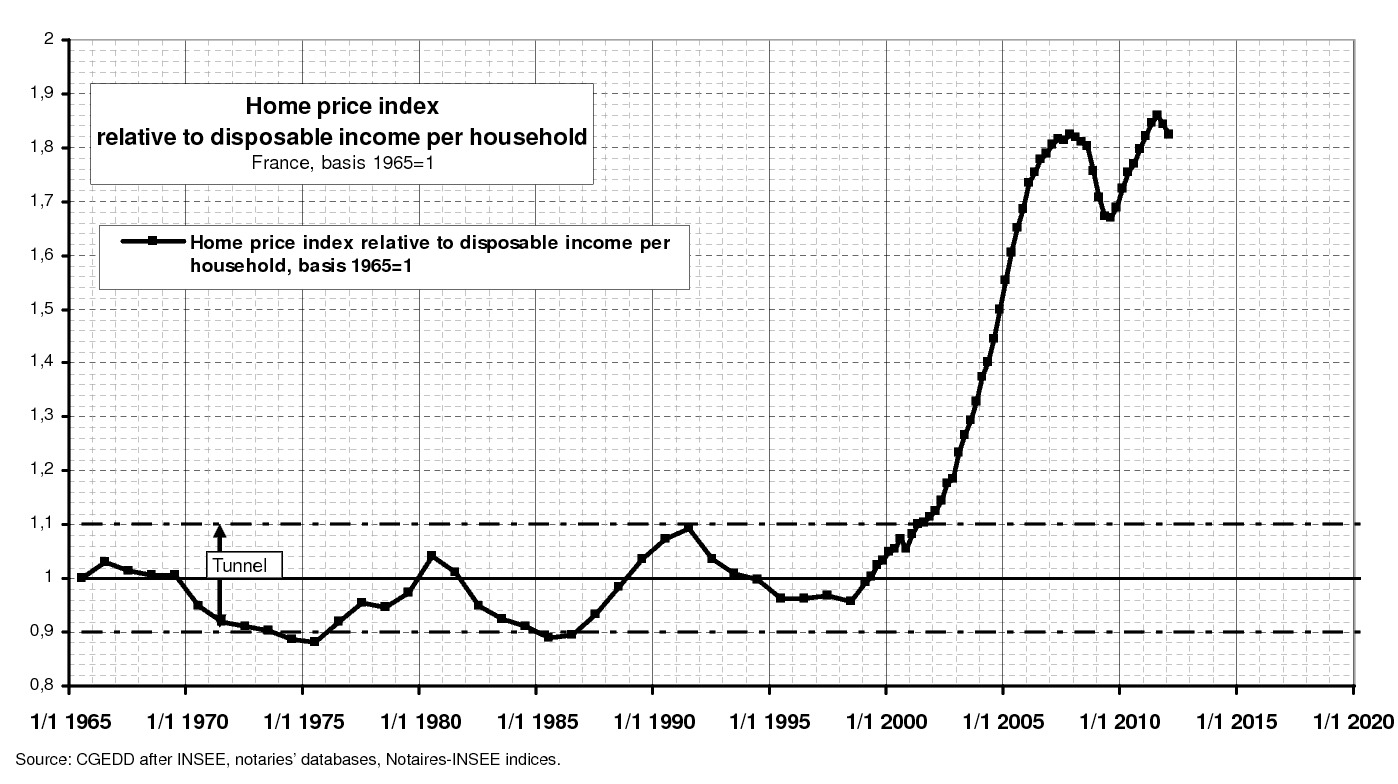}
\caption{Friggit's  curve \cite{Friggit} of the index of asset price relatively to the disposable income in France. \label{fig:friggit}}
\end{figure}

One failing of our framework is that the parameters $a$, $b$ and $c$ were assumed to be time-independent, hypothesis
which is certainly wrong in practice. 
Our model should apply only to a few periods and a finer modeling would take into account the time-inhomogeneity of $a$, $b$ and $c$.
Such an extension
remains Gaussian, but its
investigation is out of the scope of this paper.  \par
Nevertheless and heuristically, let us just consider the example of the
home price index
relative to disposable income per household in France from 1995 to 2013 shown in Figure \ref{fig:friggit}.
Consider for $X$ the logarithm of the quantity displayed in this picture, since it is a ratio
and not a difference as wanted in the introduction.
If we assume that the modeling of speculation presented in Section \ref{mos} can be applied to this case,
it seems that the coefficients $a$, $b$ and $c$ valid for the epoch 1965-1998 are not the same as those for 1998-2013.
For simplicity, let us make the hypothesis that $c$ did not change (the resolution is too coarse to check that)
and call $a_1$, $b_1$ and $a_2$, $b_2$  the respective values of $a$ and $b$.
That $a_2<a_1$ can be explained by the fact that the conditions were more favorable for buying in the second epoch, especially due
to low interest rates. Furthermore one can argue that the advent of Internet has probably modified the extend to
which the agents have access to the past evolution of the prices, both in short and long terms.
So its effect on $b$ is not a priori clear (recall that 
$1/b$ should be proportional to the
mean length of the backward time window). To get a rough idea, we can proceed as follows.
Let $t_0\df1965$, $t_1\df 1999$ and $t_2\df2008$.
Figure \ref{fig:friggit} suggests that between $t_0$ and $t_1$ there were 3 periods under the coefficients $a_1$, $b_1$ and $c$
and that between $t_1$ and $t_2$ there was one quarter of a period under the coefficients $a_2$, $b_2$ and $c$ (except if the equilibrium price has itself changed and that the evolution between 2006 and 2012 is interpreted as one period and half of a new epoch).
It follows that 
$1/\omega_1$ and $1/\omega_2$ should respectively be proportional to $(t_1-t_0)/3$ and $4(t_2-t_1)$.
Since we do not plan to be very precise, let us make the assumption that $a_1\ll b_1$ and that $a_2\ll b_2$,
so that
$\omega_1\approx \sqrt{a_1b_1}$, $\omega_2\approx \sqrt{a_2b_2}$  and 
\bq
a_1b_1&\approx&\chi a_2b_2\eq
where 
\bq
\chi&\df&\lt( \frac{12(t_2-t_1)}{t_1-t_0}\rt)^2\eq
To deduce another equation, let us believe that an ergodic theorem takes place very rapidly (permitted by the non reversibility of the process).
Thus according to Remark \ref{R0}, we would get
\bq
\frac1{t_1-t_0}\int_{t_0}^{t_1} X_s^2\,ds\ \approx\ c^2\frac{b_1+a_1}{2a_1^2}
\ \approx\ c^2\frac{b_1}{2a_1^2}\\
\frac1{t_2-t_1}\int_{t_1}^{t_2} X_s^2\,ds\ \approx\ c^2\frac{b_2+a_2}{2a_2^2}
\ \approx\ c^2\frac{b_2}{2a_2^2}\eq
(more carefully, an empirical variance should be computed), and it follows that
\bq
\frac{b_1}{a_1^2}&\approx& \wi\chi\frac{b_2}{a_2^2}\eq
where 
\bq
\wi\chi&\df&\frac{(t_2-t_1)\int_{t_0}^{t_1} X_s^2\,ds}{(t_1-t_0)\int_{t_1}^{t_2} X_s^2\,ds}
\eq
can be computed numerically on Figure \ref{fig:friggit}.
We deduce that
\bq
a_1&\approx& \chi^{5/3}\wi\chi^{1/3}a_2\\
b_1&\approx& \chi^{2/3}\wi\chi^{1/3}b_2\eq
Numerically, we obtain that $ \chi^{2/3}\wi\chi^{1/3}\approx 3.73 >1$, so it would seem that the advent of Internet has led people to rather use more recent trend of the housing market to make their speculation.

At last, using the lower bound obtained in Theorem \ref{T1}, we may postulate that the probability that the index price 
$X$ hits the equilibrium level $1$  (see  Figure \ref{fig:friggit}) before year 2017 (which corresponds to an average annual loss of around $13\%$) is at least $50\%$. We can thus wonder if the famous \textit{kiss landing} generally announced by estate agents may not more probably end in a crash \ldots


\appendix
\section{On the persistence rate}
Our goal here is to prove the existence of the quasi-stationary distribution and its persistence rate, as alluded to in Remark \ref{persistence}. It is based on general considerations relegated in this appendix because they do
 not lead to  explicit estimates such as (\ref{persistence2}), which are more important from a practical point of view than the mere existence of $\lambda_0(D)$. Furthermore, these a priori bounds will be useful in the development to follow.\par\me
Recall that $D\df\{ (x,y)\in\RR^2\st x> 0\}$ and let $\pa D$ be its boundary.
 We are interested in $L_D$, the realization on $D$ of  the differential operator $L$ given by (\ref{L}) with Dirichlet boundary condition
 on $\pa D$.
 From a probabilist point of view, it is constructed in the following way.
 For any $z\in\RR^2$, let $(Z^z_t)_{t\geq 0}$ be a diffusion process whose evolution is dictated by $L$ and whose initial condition is $Z_0^z=z$.
Starting from $z$, $(Z^z_t)_{t\geq 0}$ can be obtained by solving the stochastic differential equation (\ref{eds2}) with coefficients
given by (\ref{AC}). Let $\tau$ be the stopping time defined by (\ref{tau}), namely
\bq
\tau&\df& \inf\{t\geq 0\st Z_t^z\in \pa D\}\eq
For any $t\geq 0$, any $z\in D$ and any measurable and bounded function $f$ defined on $D$, consider
\bqn{PtD}
P^D_t[f](z)&\df& \EE[f(Z_t^z)\un_{t<\tau}]\eqn
Recall that $\mu$ is the invariant Gaussian probability measure of $L$ and denote by $\mu^D$ its restriction to $D$.
Then $P_t^D$ can be extended into a contraction operator on $\LL^2(\mu^D)$.
Indeed, let $P_t$ be the full operator associated to $L$:  any $z\in D$ and any measurable and bounded function $f$ defined on $\RR^2$,
we have  
\bqn{Pt} P_t[f](z)&\df& \EE[f(Z_t^z)]\eqn
Since $\mu$ is invariant for $P_t$, for any measurable and bounded function $f$ defined on $D$ (which can also be seen as a function on $\RR^2$ by assuming that it vanishes outside $D$), we get by Cauchy-Schwarz inequality,
\bq
\mu^D[(P_t^D[f])^2]&\leq & \mu^D[P_t^D[f^2 ]]\\
&\leq &  \mu^D[P_t[ f^2] ]\\
&\leq & \mu[P_t[ f^2] ]\\
&=&\mu[f^2]\\
&=&\mu^D[f^2]\eq
This bound enables to extend  $P_t^D$ as a contraction on $\LL^2(\mu^D)$.
The Markov property implies that $(P_t^D)_{t\geq 0}$ is a semi-group, which is easily seen to be continuous in $\LL^2(\mu^D)$.
The operator $L^D$ is then defined as the generator of this semi-group (in the Hille-Yoshida sense):
its domain $\cD(L_T^D)$ is the dense subspace  of $\LL^2(\mu^D)$ consisting of functions $f$  such that
$(P_t^D[f)-f)/t$ converges in $\LL^2(\mu^D)$ as $t$ goes to $0_+$ and the limit is $L^D[f]$ by definition.\par
The spectrum of $-L^D$ admits a smallest element (in modulus) $\lambda_0(D)$. It is a positive real number
and the main objective of this appendix is to 
justify the assertions made in Remark \ref{persistence}.
We begin by being more precise about the existence of $\lambda_0(D)$:
\begin{pro}\label{KR}
There exists a number $\lambda_0(D)>0$ and two functions $\varphi,\varphi^*\in\cD(L^D)\cap\bigcap_{r\geq 1} \LL^r(\mu^D)\setminus\{0\}$,
which are positive on $D$, such that
\bq
L^D[\varphi]&=&-\lambda_0(D)\varphi\\
L^{D*}[\varphi^*]&=&-\lambda_0(D)\varphi^*\eq
where $L^{D*}$ is  operator adjoint of $L^D$ in $\LL^2(\mu^D)$.
\end{pro}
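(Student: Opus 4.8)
The plan is to derive all the conclusions from Krein--Rutman theory applied to the killed semigroup $(P^D_t)_{t\geq 0}$, which was checked above to be a strongly continuous, sub-Markovian, positivity-preserving semigroup of contractions on $\LL^2(\mu^D)$ with generator $L^D$. \emph{First, compactness.} For $t>0$ the killed transition density $p^D_t(z,z')$ is dominated by the free Ornstein--Uhlenbeck density $p_t(z,z')$, which is that of $\cN(m_t(z),\Sigma_t)$ with $m_t(z)=\exp(At)z$ and $\Sigma_t$ as in (\ref{Sigmat}). Since $\Sigma-\Sigma_t=\int_t^{+\infty}\exp(As)CC^*\exp(A^*s)\,ds$ is positive semidefinite one has $\Sigma_t\leq\Sigma$, and combined with $\lVe\exp(At)\rVe\to0$ a direct Gaussian computation shows that, for $t$ large enough, the $\mu^D$-kernel of $P^D_t$ is square integrable over $\mu^D\otimes\mu^D$; hence $P^D_t$ is Hilbert--Schmidt, in particular compact, on $\LL^2(\mu^D)$.

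\emph{Second, irreducibility and Krein--Rutman.} Hypoellipticity of $L$ gives $P^D_t$ a smooth transition kernel on $D\times D$, and the controllability of $\dot z=Az+Cu$ inside the convex half-plane $D$, together with the Stroock--Varadhan support theorem \cite{StroockVaradhan_support}, shows this kernel is everywhere positive; thus $P^D_t$ sends a non-negative non-zero element of $\LL^2(\mu^D)$ to one that is strictly positive everywhere on $D$. Fixing $t_0$ large, $P^D_{t_0}$ is therefore an irreducible compact positive operator on $\LL^2(\mu^D)$, so by the Krein--Rutman / Jentzsch--Perron theorem (see e.g.\ \cite{MR2986807}) its spectral radius $r(P^D_{t_0})>0$ is an algebraically simple eigenvalue whose one-dimensional eigenspace is spanned by a function $\varphi$ that is positive on $D$, the unique non-negative eigenfunction up to a scalar; write $r(P^D_{t_0})=\exp(-\lambda_0(D)t_0)$. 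The $\LL^2(\mu^D)$-adjoint $P^{D*}_{t_0}$ is again compact, positive and irreducible (its $\mu^D$-kernel, obtained by swapping arguments and dividing by the Gaussian density, is still everywhere positive) with the same spectral radius, which produces in the same way a function $\varphi^*$, positive on $D$, with $P^{D*}_{t_0}\varphi^*=\exp(-\lambda_0(D)t_0)\varphi^*$.

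\emph{Third, transfer to the generators and $\lambda_0(D)>0$.} The operators $(P^D_s)_{s\geq 0}$ commute with $P^D_{t_0}$ and satisfy $P^D_s\varphi>0$, so each of them preserves the one-dimensional top eigenspace; hence $P^D_s\varphi=c(s)\varphi$ with $c\colon\RR_+\to\RR_+^*$ multiplicative and continuous, forcing $c(s)=\exp(-\lambda_0(D)s)$. Letting $s\to0_+$ in $(P^D_s\varphi-\varphi)/s=((c(s)-1)/s)\varphi$ shows $\varphi\in\cD(L^D)$ and $L^D\varphi=-\lambda_0(D)\varphi$, and symmetrically $L^{D*}\varphi^*=-\lambda_0(D)\varphi^*$ (in particular $\lambda_0(D)$ does not depend on $t_0$). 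If $\lambda_0(D)$ were $0$ we would have $P^D_t\varphi=\varphi$; extending $\varphi$ by $0$ off $D$ gives $P_t\varphi\geq\varphi$ everywhere, hence $P_t\varphi=\varphi$ $\mu$-a.e.\ by invariance of $\mu$, i.e.\ $\varphi(Z_t)\un_{t\geq\tau}=0$ $\PP_z$-a.s.\ for $\mu$-a.e.\ $z\in D$; but a controlled trajectory leaving $D$ before time $t$ and lying in $D$ at time $t$ shows (via the support theorem) that $\PP_z(\tau\leq t,\ Z_t\in D)>0$ for $\mu$-a.e.\ $z\in D$ --- a contradiction, so $\lambda_0(D)>0$.

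\emph{Integrability, and the main obstacle.} For $\varphi,\varphi^*\in\bigcap_{r\geq1}\LL^r(\mu^D)$, note that $\Sigma-\Sigma_t=\exp(At)\Sigma\exp(A^*t)\to0$ makes the Ornstein--Uhlenbeck semigroup hyperbounded, $P_t\colon\LL^2(\mu)\to\LL^r(\mu)$ being bounded for $t$ large (depending on $r$), and likewise for its $\LL^2(\mu)$-adjoint $P^*_t$ (a time-reversed Ornstein--Uhlenbeck semigroup with the same invariant measure); combining this with $0\leq P^D_t\varphi\leq P_t\varphi$, $0\leq P^{D*}_t\varphi^*\leq P^*_t\varphi^*$ and the identities $\varphi=\exp(\lambda_0(D)t)P^D_t\varphi$, $\varphi^*=\exp(\lambda_0(D)t)P^{D*}_t\varphi^*$ with $\varphi,\varphi^*\in\LL^2(\mu^D)$ yields the claim. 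The step I expect to be the main obstacle is the compactness of $P^D_{t_0}$: the domain $D$ is unbounded and the noise is degenerate, so none of the classical compact-resolvent criteria apply directly, and one has to combine hypoelliptic regularisation with the confinement furnished by the finite Gaussian reference measure $\mu$ --- precisely what the Hilbert--Schmidt estimate built on $\Sigma_t\leq\Sigma$ and $\exp(At)\to0$ is designed to achieve.
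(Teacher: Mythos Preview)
Your proof is correct and follows the same overall architecture as the paper's: Hilbert--Schmidt compactness of $P^D_t$ for large $t$ via domination by the free Ornstein--Uhlenbeck kernel, Krein--Rutman to produce the positive eigenfunctions, transfer from the time-$t_0$ operator to the generator, and hyperboundedness for the $\LL^r$ integrability. The ``main obstacle'' you flag is exactly the one the paper isolates in its Lemma~\ref{hyperalamain}.

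There are, however, three sub-arguments where you proceed differently from the paper, and it is worth noting what each choice buys. First, to show the spectral radius of $P^D_{t_0}$ is positive, the paper invokes the lower bound of Theorem~\ref{T1}, whereas you use irreducibility (positivity of the killed kernel via hypoellipticity and the support theorem) and appeal to the strong Krein--Rutman/Jentzsch form; your route is self-contained and does not depend on the quantitative estimates of Section~\ref{Dee}. Second, to pass from $P^D_{t_0}\varphi=\theta_{t_0}\varphi$ to $L^D\varphi=-\lambda_0(D)\varphi$, the paper argues via rational multiples of $T_2$ and a direct limit, while you use simplicity of the top eigenspace together with commutativity of the semigroup to get $P^D_s\varphi=c(s)\varphi$ with $c$ continuous and multiplicative; this is cleaner and avoids the bookkeeping with $\QQ$. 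Third, for $\lambda_0(D)>0$, the paper uses the upper bound of Theorem~\ref{T1} to dominate the operator norm of $P^D_{T_2}$ by $\sup_z\PP_z[\tau>T_2]<1$, whereas your contradiction argument (if $P^D_t\varphi=\varphi$ then $P_t\varphi=\varphi$ by invariance of $\mu$, contradicting $\PP_z(\tau\leq t,\ Z_t\in D)>0$) again relies only on the support theorem. The net effect is that your proof is independent of Theorem~\ref{T1}, at the price of invoking the support theorem twice; the paper's proof is shorter at those points because it can quote the bounds already established in Section~\ref{Dee}.
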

Essentially, this result is a consequence of the Krein-Rutman theorem (which is an infinite version of the Perron-Frobenius theorem, see for instance the paper \cite{ MR2205529} of Du)
and the fact that the eigenfunctions belong to  $\LL^p(\mu^D)$ instead of $\LL^2(\mu^D)$ comes
from the hyperboundedness  of the underlying Dirichlet semi-group.
\par
The rigorous proof relies on a simple technical lemma about the kernels of the operators $P^D_t$ for $t>0$.
To check their existence, we first come back to $P_t$ for a given $t>0$:
from the computations of Section \ref{pas}, this operator is indeed given by a kernel
\bq
\fo z\in \RR^2,\, \fo f\in\LL^2(\mu),\qquad P_t[f](z)&=&\int p_t(z,z') f(z')\,\mu(dz')\eq
where
\bqn{pt}
\fo z,\,z'\in \RR^2,\quad p_t(z,z')&\df& \sqrt{\frac{\det(\Sigma)}{\det(\Sigma_t)}}\exp\lt(-(z'-z_t)^*\Sigma_t(z'-z_t)+(z')^*\Sigma z'\rt)\eqn
with
\bq
z_t&\df& \exp(At)z\eq
It follows easily from (\ref{PtD}) and (\ref{Pt}) that the same is true for $P_t^D$: there exists a function
$
D^2\ni(z,z')\mapsto  p^D_t(z,z')\geq 0$ such that
\bq
\fo z\in D,\, \fo f\in\LL^2(\mu^D),\qquad P_t^D[f](z)&=&\int p_t^D(z,z') f(z')\,\mu(dz')\eq
and satisfying 
\bqn{pDp}\fo z,z'\in D,\qquad p_t^D(z,z')&\leq & p_t(z,z')\eqn
More refined arguments based on the hypoellipticity of $L^D$  enable to see that the mapping $p_t^D$ is continuous and positive on $D^2$.
We can now state a simple but crucial observation:
\begin{lem}\label{hyperalamain}
For any $r>1$, there exists a time $T_r>0$ such that
\bq
\fo t\geq T_r,\qquad\int (p_t^D(z,z'))^r\, \mu^D(dz)\,\mu^D(dz')&<&+\iy\eq
\end{lem}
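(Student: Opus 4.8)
The plan is to dominate $p_t^D$ by the full Gaussian kernel $p_t$ of $P_t$ and to recognise the resulting quantity as a finite–dimensional Gaussian integral, whose convergence is governed by the positive definiteness of an explicit matrix that stabilises as $t\to+\iy$. First I would use the domination \eqref{pDp}, namely $p_t^D(z,z')\le p_t(z,z')$ on $D^2$, together with the fact that $\mu^D$ is the restriction of $\mu$ to $D$, to reduce the claim: since the integrand is nonnegative,
\[
\int (p_t^D(z,z'))^r\,\mu^D(dz)\,\mu^D(dz')\ \le\ \int_{\RR^2}\int_{\RR^2}(p_t(z,z'))^r\,\mu(dz)\,\mu(dz')\ =:\ I_r(t),
\]
so it suffices to produce $T_r>0$ with $I_r(t)<+\iy$ for all $t\ge T_r$.

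Next I would insert the closed form \eqref{pt} of $p_t$ and the Gaussian density $g$ of $\mu$ (so that $\mu(dz)=g(z)\,dz$). Writing $z_t=e^{At}z$ and expanding the quadratic exponents, one checks that
\[
(p_t(z,z'))^r\,g(z)\,g(z')\ =\ C_{t,r}\,\exp\!\Big(-\tfrac12\,w^{*}Q_t\,w\Big),\qquad w=(z,z')^{*}\in\RR^{4},
\]
where $C_{t,r}>0$ depends only on $t$ and $r$ and the symmetric $4\times4$ matrix $Q_t$, which depends continuously on $t$, is given blockwise by
\[
Q_t=\begin{pmatrix}\Sigma^{-1}+r\,(e^{At})^{*}\Sigma_t^{-1}e^{At} & -\,r\,(e^{At})^{*}\Sigma_t^{-1}\\[2mm] -\,r\,\Sigma_t^{-1}e^{At} & r\,\Sigma_t^{-1}-(r-1)\,\Sigma^{-1}\end{pmatrix}.
\]
Hence $I_r(t)<+\iy$ as soon as $Q_t$ is positive definite.

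Finally I would invoke the spectral facts already established in Subsection \ref{Gc}: the eigenvalues of $A$ have negative real part, so $e^{At}\to0$, and, since $\Sigma-\Sigma_t=e^{At}\Sigma\,e^{A^{*}t}$ by \eqref{Sigmat}, also $\Sigma_t\to\Sigma$ (hence $\Sigma_t^{-1}\to\Sigma^{-1}$) as $t\to+\iy$. Therefore $Q_t$ converges to the block–diagonal matrix $\mathrm{diag}(\Sigma^{-1},\Sigma^{-1})$, which is positive definite; as the set of positive definite matrices is open, there exists $T_r>0$ with $Q_t$ positive definite for every $t\ge T_r$, which proves the lemma. There is no deep obstacle here: the only points needing care are the passage from $p_t^D$ to $p_t$ via \eqref{pDp} (so that the integrand becomes a genuine Gaussian), keeping the bookkeeping of the quadratic form straight so that its limit is correctly identified as block–diagonal, and noting that one only needs the existence of $T_r$, not an explicit value, so that openness of the positive–definite cone closes the argument.
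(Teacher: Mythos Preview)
Your proof is correct and follows exactly the approach the paper indicates: dominate $p_t^D$ by $p_t$ via \eqref{pDp}, reduce to a Gaussian integral over $\RR^4$, and use $e^{At}\to0$ and $\Sigma_t\to\Sigma$ to conclude. The paper's own proof is only a one-line sketch (``can be obtained without difficulty from \eqref{pt} and the explicit computations of $\exp(tA)$, $\Sigma_t$ and $\Sigma$''), and your argument supplies precisely the details---the block form of $Q_t$ and its limit $\mathrm{diag}(\Sigma^{-1},\Sigma^{-1})$---that the paper leaves implicit.
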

\proof
From (\ref{pDp}), it is sufficient to prove that
\bq
\int (p_t(z,z'))^r\, \mu(dz)\,\mu(dz')&<&+\iy\eq
and this can be obtained without difficulty from (\ref{pt}) and from the explicit computations of $\exp(tA)$
of $\Sigma_t$ and of $\Sigma$ presented in Section \ref{pas}.\wwtbp
We can now come to the
\prooff{Proof of Proposition \ref{KR}}
We begin by applying Lemma \ref{hyperalamain} with $r=2$ to find some $T_2>0$ such that
for $t\geq T_2$ we have
\bq
\int (p_t^D(z,z'))^2\, \mu^D(dz)\,\mu^D(dz')&<&+\iy\eq
which implies that $P^D_t$ is of Hilbert-Schmidt class and thus a compact operator.
Note furthermore that the spectral radius of $P_t^D$ is positive for all $t\geq 0$. 
Indeed, this feature can be deduced from the second bound of Theorem \ref{T1}, which implies that for all $z\in D$,
$P_t^D[\un_D](z)=\PP_z[\tau>t]>0$. Thus
we are in position to apply Krein-Rutman theorem
(see  Theorems 1.1 and 1.2 of Du \cite{ MR2205529}, where the abstract Banach $X$ space should be $\LL^2(\mu^D)$
and the cone $K$ should consist of the nonnegative elements of $\LL^2(\mu^D)$):
if $\theta_t>0$ is the spectrum radius of $P_t^D$, then there exists a positive function $\varphi_t\in\LL^2(\mu^D)\setminus\{0\}$ such that
$P_t[\varphi_t]=\theta_t\varphi_t$. This property characterizes $\theta_t$ and $\varphi_t$ (up to a constant factor): if $\theta$ is a positive real and if $\varphi\in\LL^2(\mu^D)$ is a 
positive function such that
$P_t[\varphi]=\theta\varphi$ then is $\theta=\theta_t$ and $\varphi$ is proportional to $\varphi_t$.
This suggests to consider the renormalization $\mu^D[\varphi_t^2]=1$, so that $\varphi_t$ is uniquely determined (being positive).
From the previous property, we deduce that for all $t\geq T_2$ and
 all $n\in\NN$, $\varphi_{nt}=\varphi_t$ and $\theta_{nt}=\theta^n_t$. Indeed, it is sufficient to note that
 \bq
 P_{nt}^D[\varphi_t]&=&(P_t^D)^n[\varphi_t]\\
 &=&\theta_t^n\varphi_{t}\eq
 We deduce that for any $r\in \QQ\cap[1,+\iy)$, $\varphi_{T_2r}=\varphi_{T_2}$ and $\theta_{{T_2}r}=\theta_{T_2}^r$:
 write $r=p/q$ with $p,q\in\NN$ and note that $\varphi_{T_2}=\varphi_{p{T_2}}=\varphi_{qr{T_2}}=\varphi_{r{T_2}}$
 and similarly  $\theta_{T_2}^p=\theta_{r{T_2}}^q=\theta_{pT_2}$.
Let us define $\varphi\df P_{T_2}^D\varphi_{T_2}=\theta_{T_2}\varphi_{T_2}$. Since ${T_2}>0$ and $P_{T_2}^D(\LL^2(\mu^D))$ is included in the domain of $L^D$,
we have $\varphi\in \cD(L^D)$. Furthermore from the general Hille-Yoshida theory we have in $\LL^2(\mu^D)$,
\bq
\lim_{t\ri 0_+} \frac{P_{{T_2}+t}^D[\varphi_{T_2}]-P_{{T_2}}^D[\varphi_{T_2}]}{t}&=&L^D[P_{T_2}[\varphi_{T_2}]]\eq
Thus considering $t$ of the form $q{T_2}$ with $q\in\QQ_+$ going to zero, we deduce
that
\bq
L^D[\varphi]&=&\lim_{q\in\QQ,\,q\ri 0_+}\frac{\theta_{T_2}^{q+1}-\theta_{T_2}}{{T_2}q}\varphi_{T_2}\\
&=&\theta_{T_2}\frac{\ln(\theta_{T_2})}{{T_2}}\varphi_{T_2}\\
&=&\frac{\ln(\theta_{T_2})}{{T_2}}\varphi\eq
It remains to set $\lambda_0(D)=-\ln(\theta_{T_2})/{{T_2}}$. Since $\theta_{T_2}$ is the spectral norm of the contraction operator $P_{T_2}$,
it appears that $\lambda_0(D)\geq 0$.
 The first bound of Theorem \ref{T1} enables to check that $\lambda_0(D)> 0$: from Cauchy-Schwarz inequality, we get that 
 for all $f\in\LL^2(\mu^D)$ and all $z\in D$, 
 \bq (P^D_{T_2}[f])^2(z)&\leq& P^D_{T_2}[f^2](z)P_{T_2}^D[\un_D](z)\\
 &\leq &  P^D_{T_2}[f^2](z)\sup_{z'\in D}P_{T_2}^D[\un_D](z')\eq
it follows that 
 \bq
 \mu^D[(P^D_{T_2}[f])^2]& \leq&\sup_{z\in D}P_{T_2}^D[\un_D](z)\mu^D[P^D_{T_2}[f^2]]\\
& \leq& \sup_{z\in D}P_{T_2}^D[\un_D](z)\mu^D[f^2]\eq
 So the norm operator of $P^D_{T_2}$ satisfies
 \bqn{normPD}
\theta_{T_2}\ =\  \lVe P^D_{T_2}\rVe_{\LL^2(\mu^D)\ri\LL^2(\mu^D)}\ \leq \  \sup_{z\in D}P_{T_2}^D[\un_D](z)\ =\ \sup_{z\in D}\PP_z[\tau>{T_2}]\eqn
 which itself is strictly less than 1 for ${T_2}$ large enough. Up to the choice of such a ${T_2}$ in the above arguments,
 we conclude that $\lambda_0(D)> 0$.\par\sm
Let us now check that $\varphi\in\bigcap_{r\geq 1}\LL^r(\mu^D)$, since a priori we only know that
$\varphi\in \LL^2(\mu^D)=\bigcap_{r\in[1,2]}\LL^r(\mu^D)$.
This is due to the hyperboundedness of $(P_t^D)_{t\geq 0}$.
Let $r>2$ be given and a corresponding $T_r>0$ such that the conclusion of Lemma \ref{hyperalamain} is satisfied.
Let $f\in\LL^2(\mu^D)$ be given. Cauchy-Schwarz and H\"older inequalities imply that for all $z\in D$ and all $t\geq T_r$,
\bq
(P_t^D[f](z))^r&=& \lt(\int f(z')p_t^D(z,z')\,\mu^D(dz')\rt)^r\\
&\leq &  \lt(\int f^2(z')\,\mu^D(dz')\rt)^\frac{r}{2} \lt(\int (p_t^D(z,z'))^2\,\mu^D(dz')\rt)^\frac{r}{2} \\
&\leq &  \lt(\int f^2(z')\,\mu^D(dz')\rt)^\frac{r}{2} \lt(\int (p_t^D(z,z'))^r\,\mu^D(dz')\rt)\eq
Integrating this bound with respect to $\mu^D(dz)$, it follows that
\bq
\lt(\int (P_t^D[f])^r\,d\mu^D\rt)^{\frac1r}&\leq &  \lt(\int (p_t^D(z,z'))^r\,\mu^D(dz)\mu^D(dz')\rt)^{\frac1r} \lt(\int f^2(z')\,\mu^Ddz')\rt)^\frac{1}{2}\eq
namely $P_t^D$ send continuously $\LL^2(\mu^D)$ into $\LL^r(\mu^D)$.
If furthermore $t$ is of the form $T_2q$ with $q\in\QQ\cap [1,+\iy)$, we get from $\varphi_{T_2q}=P_{T_2q}^D[\varphi_{T_2q}]/\theta_{T_2q}$ that $\varphi=\varphi_{T_2q}$ belongs to $\LL^r(\mu^D)$. \par
The same arguments are also valid for the adjoint semigroup $(P_t^{D*})_{t\geq 0}$. Its elements for $t>0$ admit the kernels $p^{D*}_t$
where
\bq
\fo t>0,\,\fo z,z'\in D,\qquad
p^{D*}_t(z,z')\ \df\  \frac{\mu^D(z')p_t^{D}(z',z)}{\mu^D(z)}\ =\ \frac{\mu(z')p_t^{D}(z',z)}{\mu(z)}\eq
We end up with the same quantity $\lambda_0(D)$, since for any $t>0$ the operators $P_t^{D}$ and $P_t^{D^*}$ have the same
spectral radius.
\wwtbp
\par
Let $\nu^D$ be the probability measure on $D$ which admits $\varphi^*/\mu^D[\varphi^*]$ as density with respect to $\mu^D$. 
Next result shows the validity of (\ref{persistence1}):
\begin{pro}
The probability measure $\nu^D$ is a quasi-stationary distribution for $L^D$ and
under $\PP_{\nu^D}$, $\tau$ is distributed as an exponential law of parameter $\lambda_0(D)$.
\end{pro}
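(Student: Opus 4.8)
The plan is to exploit the eigenfunction $\varphi^*$ of $L^{D*}$ constructed in Proposition \ref{KR}, together with the $\LL^2(\mu^D)$-duality between the semi-groups $(P_t^D)_{t\geq 0}$ and $(P_t^{D*})_{t\geq 0}$. Since $\varphi^*\in\cD(L^{D*})$ and $L^{D*}[\varphi^*]=-\lambda_0(D)\varphi^*$, the classical fact that an eigenvector of the generator is an eigenvector of the whole strongly continuous semi-group (apply $\frac{d}{dt}P_t^{D*}[\varphi^*]=P_t^{D*}[L^{D*}\varphi^*]=-\lambda_0(D)P_t^{D*}[\varphi^*]$ and solve the resulting scalar ODE in $\LL^2(\mu^D)$) yields
\[
\fo t\geq 0,\qquad P_t^{D*}[\varphi^*]\ =\ e^{-\lambda_0(D)t}\,\varphi^*,
\]
all manipulations taking place in $\LL^2(\mu^D)$ because, by Proposition \ref{KR}, $\varphi^*\in\bigcap_{r\geq 1}\LL^r(\mu^D)\subset\LL^2(\mu^D)$.

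Next I would transfer this to the forward semi-group. Let $f$ be a bounded measurable function on $D$, extended by $0$ off $D$; since $\mu^D$ is a finite measure, both $f$ and $\varphi^*$ belong to $\LL^2(\mu^D)$, so the very definition of the adjoint gives
\[
\int_D P_t^D[f]\,\varphi^*\,d\mu^D\ =\ \int_D f\,P_t^{D*}[\varphi^*]\,d\mu^D\ =\ e^{-\lambda_0(D)t}\int_D f\,\varphi^*\,d\mu^D.
\]
Dividing by $\mu^D[\varphi^*]$ and recalling that $\nu^D$ has density $\varphi^*/\mu^D[\varphi^*]$ with respect to $\mu^D$, this reads
\[
\fo t\geq 0,\qquad \int_D P_t^D[f]\,d\nu^D\ =\ e^{-\lambda_0(D)t}\int_D f\,d\nu^D.
\]

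The two assertions then follow by specializing $f$. Taking $f=\un_D$ and using that, by $(\ref{PtD})$ and the continuity of the trajectories (together with the openness of $D$, which forces $Z_t^z\in D$ on $\{t<\tau\}$), one has $P_t^D[\un_D](z)=\EE[\un_D(Z_t^z)\un_{t<\tau}]=\PP_z[\tau>t]$, integration against $\nu^D$ gives $\PP_{\nu^D}[\tau>t]=e^{-\lambda_0(D)t}$; since the exponential law has no atom this is exactly $(\ref{persistence1})$, so $\tau$ is exponential of parameter $\lambda_0(D)$ under $\PP_{\nu^D}$. Taking instead $f=\un_A$ for a Borel set $A\subset D$ gives $\PP_{\nu^D}[Z_t\in A,\ \tau>t]=e^{-\lambda_0(D)t}\nu^D(A)$; dividing by $\PP_{\nu^D}[\tau>t]$ yields $\PP_{\nu^D}[Z_t\in A\mid \tau>t]=\nu^D(A)$, and since $A$ is arbitrary, $\nu^D$ is a quasi-stationary distribution for $L^D$. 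The only point needing a little care is the first step — that the $\LL^2(\mu^D)$-eigenfunction $\varphi^*$ of the generator is genuinely an eigenfunction of $(P_t^{D*})_{t\geq 0}$ with the expected eigenvalue, and that $(P_t^{D*})_{t\geq 0}$ is indeed the $\LL^2(\mu^D)$-adjoint semi-group with generator $L^{D*}$ — but this is standard semi-group theory once Proposition \ref{KR} provides $\varphi^*\in\cD(L^{D*})\cap\bigcap_{r\geq1}\LL^r(\mu^D)$; the remainder is the routine duality computation above.
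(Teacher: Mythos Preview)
Your proof is correct and follows essentially the same route as the paper's own argument: both hinge on the adjoint eigenrelation $L^{D*}[\varphi^*]=-\lambda_0(D)\varphi^*$ and the $\LL^2(\mu^D)$-duality between $P_t^D$ and $P_t^{D*}$, then specialize to $f=\un_D$. The only cosmetic difference is that the paper differentiates $t\mapsto\nu^D[P_t^D[f]]$ for $f\in\cD(L^D)$ and uses generator-level adjointness before extending to bounded measurable $f$, whereas you first lift the eigenrelation to the semigroup level via $P_t^{D*}[\varphi^*]=e^{-\lambda_0(D)t}\varphi^*$ and then apply duality directly for bounded $f$; your ordering is arguably a touch cleaner since it sidesteps the approximation step.
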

\proof
Let a test function $f\in\cD(L^D)$ be given. 
We compute that for all $t\geq 0$,
\bq
\pa_t \nu^D[P_t^D[f]]&=&\nu^D[L^DP_t^D[f]]\\
&=&\mu^D[\varphi^*L^DP_t^D[f]]/\mu^D[\varphi^*]\\
&=&\mu^D[L^{D*}[\varphi^*]P_t^D[f]]/\mu^D[\varphi^*]\\
&=&-\lambda_0(D)\mu^D[[\varphi^*]P_t^D[f]]/\mu^D[\varphi^*]\\
&=&-\lambda_0(D) \nu^D[P_t^D[f]]\eq
By integration it follows that 
\bq
\nu^D[P_t^D[f]]&=&\exp(-\lambda_0(D)t)\nu^D[f]\eq
at least for $f\in\cD(L^D)$, but by usual approximation procedures, this can be extended
to any $f$ which is measurable and bounded (or nonnegative). 
This means that $\nu^D$ is a quasi-stationary distribution for $L^D$ with rate $\lambda_0(D)$.
In particular with $f=\un_D$, we get
\bq
\PP_{\nu^D}[\tau>t]&=&\nu^D[P_t^D[\un_D]]\\
&=&\exp(-\lambda_0(D)t)\nu^D[\un_D]\\
&=&\exp(-\lambda_0(D)t)\eq
which amounts to $\tau$ being distributed as an exponential law of parameter $\lambda_0(D)$ under $\PP_{\nu^D}$.\wwtbp
\par
The bounds (\ref{persistence2}) are now easy to deduce. Indeed recalling the definition of $\lambda_0(D)$ in terms of $\theta_{T_2}$
given in the proof of Proposition \ref{KR} (and the fact that $T_2$ can be chosen arbitrary large), we get  from the first bound 
of Theorem \ref{T1} that $\lambda_0(D)\geq{\ln(2)}\omega/\pi$.


The second bound of Theorem \ref{T1} applied with $m_0=\nu^D$ gives that $\lambda_0(D)\leq 4$.\par\me
\begin{rem}
Is $\nu^D$ the unique quasi-stationary probability measure associated to $L^D$?
A priori one has to be careful since this is wrong for the usual one-dimensional Ornstein-Uhlenbeck process with respect to a half-line.
Nevertheless we believe there is uniqueness in our situation, because it is easy for the underlying process to get out of $D$ uniformly over the starting point
(as shown by the first bound of Theorem \ref{T1}) and this should be a sufficient condition (in the spirit of Section 7.7 of  the book \cite{MR2986807} of Collet, Mart{\'{\i}}nez and San
              Mart{\'{\i}}n, which unfortunately only treat the case of one-dimensional diffusions).
              At least from the uniqueness statement included in Krein-Rutman theorem
(cf.\ again Theorem 1.2 of Du \cite{ MR2205529}), we deduce that $\nu^D$ is the unique quasi-stationary measure admitting
a density with respect to $\mu^D$ which is in $\LL^2(\mu^D)$.
By hyperboundedness of $(P^D_t)_{t\geq 0}$, the latter condition can be relaxed by only requiring that the density belongs to $\bigcap_{p>1}\LL^p(\mu^D)$.
\end{rem}

\section{Computations in polar coordinates}

For the sake of completeness, we give below a series of elementary but tedious computations which are omitted in Section \ref{Dee}. We start with the proof of \eqref{opellippol} and \eqref{operhypo}
\smallskip
\begin{pro}\label{prop:appendix1}
In the usual polar coordinates $(r,\theta)$,
the infinitesimal generator $L_{\rho}$ of the elliptic diffusion whose evolution is described by (\ref{eq:xitt}) is given by
$$
\forall g \in {\cal C}^2(\ER_+^*\times\ER), \qquad 
L_\rho g(r,\theta)=-\rho r\partial_r g(r,\theta) +\partial_\theta g(r,\theta)+\partial ^2_r g(r,\theta)+\frac{1}{r}\partial_r g(r,\theta)+\frac{\partial^2_\theta}{r^2}g(r,\theta).
$$
In a similar way, the action infinitesimal generator $\cal{L}_{\rho}$ of the hypo-elliptic diffusion described by (\ref{eq:hypocoercif}) is given by
$$
{\cal L}_\rho =-\rho r \partial_r  +\partial_\theta +\frac{\sin^2\theta}{2}\partial^2_{rr}-\frac{\sin\theta \cos\theta}{r^2}\partial_\theta
 +\frac{\sin\theta \cos\theta}{r}\partial^2_{r\theta}+\frac{\cos^2\theta}{2r}\partial_r+\frac{\cos^2\theta}{2r^2}\partial^2_\theta.
$$

\end{pro}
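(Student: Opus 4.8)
The plan is to obtain both formulas by first writing down the generator of each diffusion in Cartesian coordinates, directly from its defining stochastic differential equation, and then carrying out the standard change of variables to polar coordinates $x=r\cos\theta$, $y=r\sin\theta$. For \eqref{eq:xitt}, since $J_2(x,y)^*=(-y,x)^*$ and a coefficient $\sqrt2$ in front of a two-dimensional Brownian motion contributes $\tfrac12(\sqrt2)^2(\partial_x^2+\partial_y^2)=\partial_x^2+\partial_y^2$, the Cartesian generator is
$$L_\rho=(-\rho x-y)\,\partial_x+(-\rho y+x)\,\partial_y+\partial_x^2+\partial_y^2.$$
For \eqref{eq:hypocoercif} only the second coordinate carries noise, so the generator has the same drift part and a second-order part which is the multiple of $\partial_y^2$ prescribed by that equation. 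In particular the two drifts coincide, and I would handle them once and for all using the elementary identities $x\,\partial_x+y\,\partial_y=r\,\partial_r$ (the radial field) and $-y\,\partial_x+x\,\partial_y=\partial_\theta$ (the rotation field): the drift $(-\rho x-y)\partial_x+(-\rho y+x)\partial_y$ then becomes $-\rho r\,\partial_r+\partial_\theta$, which is exactly the first two terms in both displayed operators.

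For the second-order parts I would use the chain rule $\partial_x=\cos\theta\,\partial_r-\tfrac{\sin\theta}{r}\,\partial_\theta$ and $\partial_y=\sin\theta\,\partial_r+\tfrac{\cos\theta}{r}\,\partial_\theta$. In the elliptic case this reduces to the classical polar form of the Laplacian $\partial_x^2+\partial_y^2=\partial_r^2+\tfrac1r\partial_r+\tfrac1{r^2}\partial_\theta^2$, which together with the drift already gives the announced $L_\rho$. The only genuine computation is the polar expansion of $\partial_y^2$ for the hypoelliptic operator: applying $\partial_y=\sin\theta\,\partial_r+\tfrac{\cos\theta}{r}\,\partial_\theta$ twice, and keeping track of the extra terms produced when $\partial_r$ hits the factor $\tfrac1r$ and when $\partial_\theta$ hits $\sin\theta$ and $\cos\theta$, one finds
$$\partial_y^2=\sin^2\theta\,\partial_r^2+\frac{2\sin\theta\cos\theta}{r}\,\partial_{r\theta}^2+\frac{\cos^2\theta}{r^2}\,\partial_\theta^2+\frac{\cos^2\theta}{r}\,\partial_r-\frac{2\sin\theta\cos\theta}{r^2}\,\partial_\theta.$$
Multiplying by the diffusion coefficient coming from \eqref{eq:hypocoercif} and adding the drift part $-\rho r\,\partial_r+\partial_\theta$ then produces the stated expression for $\mathcal{L}_\rho$. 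One can double-check the outcome on test functions such as $r^2=x^2+y^2$ or $y=r\sin\theta$, for which $\partial_y^2$ is immediate in Cartesian coordinates.

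There is no conceptual difficulty anywhere in this argument; the entire content is bookkeeping, namely the non-commutativity of $\partial_r$ with multiplication by powers of $r$ and the $\theta$-dependence of the trigonometric coefficients in the chain-rule substitution. That is precisely why these elementary but lengthy computations are collected in this appendix rather than carried out in the main text.
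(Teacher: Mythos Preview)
Your approach is correct and essentially the same as the paper's: both proofs write down the Cartesian generator, use the chain-rule formulas $\partial_x=\cos\theta\,\partial_r-\tfrac{\sin\theta}{r}\partial_\theta$ and $\partial_y=\sin\theta\,\partial_r+\tfrac{\cos\theta}{r}\partial_\theta$, and then expand the drift and second-order parts. The only (cosmetic) differences are that you shortcut the drift via the Euler and rotation identities $x\partial_x+y\partial_y=r\partial_r$, $-y\partial_x+x\partial_y=\partial_\theta$ and invoke the classical polar Laplacian directly, whereas the paper computes $\partial_x^2f$ and $\partial_y^2f$ separately and sums them; the substance is identical.
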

\begin{proof}
 First, write $g(r,\theta)=f(r\cos\theta,r\sin\theta)$. Using that
\begin{align*}
\partial_x f=\cos \theta \partial_r g-\frac{\sin\theta}{r}\partial_\theta g,
\quad \partial_y f=\sin \theta\partial_r g+\frac{\cos\theta}{r}\partial_\theta g,
\end{align*}
one checks that,
$$-(\rho x+y)\partial_x f+(x-\rho y) \partial_y f=-\rho r\partial_r g+\partial_\theta g.$$
Second,
$$\partial^2_{x}f={\cos^2\theta}\partial^2_{r} g+2\frac{\sin\theta \cos\theta}{r^2}\partial_\theta g
 -2\frac{\sin\theta \cos\theta}{r}\partial^2_{r\theta}g+\frac{\sin^2\theta}{r}\partial_rg+\frac{\sin^2\theta}{r^2}\partial^2_\theta g,$$
 
$$\partial^2_{y}f={\sin^2\theta}\partial^2_{r}g-2\frac{\sin\theta \cos\theta}{r^2}\partial_\theta g
 +2\frac{\sin\theta \cos\theta}{r}\partial^2_{r\theta} g+\frac{\cos^2\theta}{r}\partial_r g +\frac{\cos^2\theta}{r^2}\partial^2_\theta g.$$
The expressions \eqref{opellippol} and \eqref{operhypo} follow. \wwtbp
\end{proof}
\smallskip

%


\noindent \textbf{Proofs of Proposition \ref{prop:lrhog_elliptic} and Proposition \ref{prop:lrhog_hypo}} 
In Subsections \ref{subsec:ellip} and \ref{subsec:hypo}, we need to compute $\frac{L_\rho g}{g}$ and $\frac{{\cal L}_\rho g}{g}$ where $g$ has the following form
$$g(r,\theta)=r^n{\gamma}(\theta) e^{\beta(\theta)r^2}.$$
Note that in Subsection \ref{subsec:ellip}, $n=1$ and $\gamma(\theta)=\cos\theta$. Then,  $\frac{L_\rho g}{g}$ and $\frac{{\cal L}_\rho g}{g}$ are expressed in terms of some functions denoted by $\psi_1$, $\psi_2$, $\varphi_i$, $i=1,2,3$. The computation of these functions follows from those of the derivatives of $g$ given below:
\begin{align*}
&\frac{\partial_r {g}}{{g}}(r,\theta)=\left(\frac{n}{r}+2\beta(\theta) r\right),\\
&\frac{\partial_{r}^2 {g}}{{g}}(r,\theta)=\frac{n^2-n}{r^2}+4r^2\beta^2(\theta)+(4n+2)\beta(\theta),\\
&\frac{\partial_\theta {g}}{{g}}(r,\theta)=\beta'(\theta) r^2+\frac{{\gamma}'(\theta)}{{\gamma}(\theta)},\\
&\frac{\partial^2_{r\theta} {g}}{r{g}}(r,\theta)=2\beta(\theta)\beta'(\theta)r^2+\left((2+n)\beta'(\theta)+2\frac{{\gamma}'(\theta)}{{\gamma}(\theta)}\beta(\theta)\right)+n\frac{{\gamma}'(\theta)}{r^2{\gamma}(\theta)}\\
&\frac{1}{r^2}\frac{\partial_\theta^2 {g}}{{g}}(r,\theta)=\beta'(\theta)^2 r^2 +\left(\beta''(\theta)+2 \beta'(\theta)\frac{{\gamma}'(\theta)}{{\gamma}(\theta)}\right)+\frac{1}{r^2}\frac{{\gamma}''(\theta)}{{\gamma}(\theta)}.
\end{align*}
We can now use carefully the expressions of the elliptic (resp. hypo-elliptic) generator $L_{\rho}$ (resp. $\cal{L}_\rho$)  given by \eqref{opellippol} (resp. \eqref{operhypo}). \wwtbp



 \bibliography{bubblebib}

\begin{thebibliography}{10}

\bibitem{adler}
Robert~J. Adler.
\newblock {\em An introduction to continuity, extrema, and related topics for
  general {G}aussian processes}.
\newblock Institute of Mathematical Statistics Lecture Notes---Monograph
  Series, 12. Institute of Mathematical Statistics, Hayward, CA, 1990.

\bibitem{MR590626}
R.~Azencott.
\newblock Grandes d\'eviations et applications.
\newblock In {\em Eighth {S}aint {F}lour {P}robability {S}ummer {S}chool---1978
  ({S}aint {F}lour, 1978)}, volume 774 of {\em Lecture Notes in Math.}, pages
  1--176. Springer, Berlin, 1980.

\bibitem{Bernanke}
B.~Bernanke and M.~Gertler.
\newblock Should central banks respond to movements in asset prices?
\newblock {\em American Economic Review, Papers and Proceedings}, 91:253--257,
  2001.

\bibitem{blanchard}
O.~J. Blanchard and M.~W. Watson.
\newblock {\em Bubbles, Rational Expectations, and Financial Markets}.
\newblock Crisis in the Economic and Financial Structure. P. Wachtel, 315.
  Lexington, Lexington, MA, 1982.

\bibitem{boucher2}
Boucher C.
\newblock Stock prices, inflation and stock returns predictability.
\newblock {\em Finance}, 27:71--102, 2006.

\bibitem{MR2986807}
Pierre Collet, Servet Mart{\'{\i}}nez, and Jaime San~Mart{\'{\i}}n.
\newblock {\em Quasi-stationary distributions}.
\newblock Probability and its Applications (New York). Springer, Heidelberg,
  2013.
\newblock Markov chains, diffusions and dynamical systems.

\bibitem{MR1956078}
Pierre Del~Moral and L.~Miclo.
\newblock Particle approximations of {L}yapunov exponents connected to
  {S}chr\"odinger operators and {F}eynman-{K}ac semigroups.
\newblock {\em ESAIM Probab. Stat.}, 7:171--208, 2003.

\bibitem{MR2205529}
Yihong Du.
\newblock {\em Order structure and topological methods in nonlinear partial
  differential equations. {V}ol. 1}, volume~2 of {\em Series in Partial
  Differential Equations and Applications}.
\newblock World Scientific Publishing Co. Pte. Ltd., Hackensack, NJ, 2006.
\newblock Maximum principles and applications.

\bibitem{Evans}
G.W. Evans.
\newblock Pitfalls in testing for explosive bubbles in asset prices.
\newblock {\em American Economic Review}, 81(4):922--930, 1991.

\bibitem{Fahri}
E.~Fahri and Tirole J.
\newblock Bubbly liquidity.
\newblock {\em Review of Economic Studies}, 79:678--706, 2012.

\bibitem{MR1652127}
M.~I. Freidlin and A.~D. Wentzell.
\newblock {\em Random perturbations of dynamical systems}, volume 260 of {\em
  Grundlehren der Mathematischen Wissenschaften [Fundamental Principles of
  Mathematical Sciences]}.
\newblock Springer-Verlag, New York, second edition, 1998.

\bibitem{Friggit}
J.~Friggit.
\newblock House prices in france : Property price index, {F}rench real estate
  market trends, 1200-2013.
\newblock {\em Technical report}, 2013.

\bibitem{Garber}
P.~M. Garber.
\newblock Famous first bubbles.
\newblock {\em Journal of Economic Perspectives}, 4:35--54, 1990.

\bibitem{kiselev}
A.~Kiselev and L.~Ryzhik.
\newblock A simple model for asset price bubble formation and collapse.
\newblock {\em Technical Report}, pages 1--30, 2010.

\bibitem{MR1725357}
Daniel Revuz and Marc Yor.
\newblock {\em Continuous martingales and {B}rownian motion}, volume 293 of
  {\em Grundlehren der Mathematischen Wissenschaften [Fundamental Principles of
  Mathematical Sciences]}.
\newblock Springer-Verlag, Berlin, third edition, 1999.

\bibitem{Shiller}
Robert~J. Shiller.
\newblock {\em Irrational Exuberance}.
\newblock Princeton University Press, 3 Market place, Woodstock, Oxfordshire,
  2000.

\bibitem{StroockVaradhan_support}
D.~Stroock and S.~R.~S. Varadhan.
\newblock On the support of diffusion processes with applications to the strong
  maximum principle.
\newblock {\em Symposium of Math. Statist. Prob. III}, pages 333--359, 1970.

\bibitem{tirole1}
J.~Tirole.
\newblock On the possibility of speculation under rational expectations.
\newblock {\em Econometrica}, 50(5):1163--1182, 1982.

\end{thebibliography}
 \bibliographystyle{plain}

\vskip2cm
\hskip70mm\box5

\end{document}